\newtheorem{mainthm}{Theorem}
\newtheorem{theorem}{Theorem}[section]
\newtheorem*{theorem*}{Theorem}
\newtheorem{corollary}[theorem]{Corollary}
\newtheorem{lemma}[theorem]{Lemma}
\newtheorem{proposition}[theorem]{Proposition}
\newtheorem*{proposition*}{Proposition}
\newtheorem*{conjecture*}{Conjecture}
\theoremstyle{definition}
\newtheorem{remark}[theorem]{Remark}
\numberwithin{equation}{section}
\def\bN {\mathbb{N}}
\def\bR {\mathbb{R}}
\def\bZ {\mathbb{Z}}
\def\cA {\mathcal{A}}
\def\cE {\mathcal{E}}
\def\cF {\mathcal{F}}
\def\cM {\mathcal{M}}
\def\cN {\mathcal{N}}
\def\cR {\mathcal{R}}
\def\cX {\mathcal{X}}
\def\cY {\mathcal{Y}}
\def\cZ {\mathcal{Z}}
\def\scrL{\mathscr{L}}
\def\grad {{\nabla}}
\def\rstr {{\big |}}
\def\la {\langle}
\def\ra {\rangle}
\newcommand{\sto}[1]{\xrightarrow[#1]{}}
\newcommand{\tx}[1]{\mathrm{#1}}
\newcommand{\wt}[1]{\widetilde{#1}}
\newcommand{\bs}[1]{\boldsymbol{#1}}
\newcommand{\supp}{\operatorname{supp}}
\renewcommand{\ker}{\operatorname{ker}}
\newcommand{\Id}{\operatorname{Id}}
\newcommand{\eee}{\mathrm e}
\newcommand{\ud}{\mathrm{\,d}}
\newcommand{\vd}{\mathrm{d}}
\newcommand{\vD}{\mathrm{D}}
\newcommand{\dd}[1]{{\frac{\vd}{\vd{#1}}}}
\newcommand{\uln}[1]{{\underline{ #1 }}}
\newcommand{\lin}{_{\textsc{lin}}}
\newcommand{\yp}{{\cY}^+}
\newcommand{\ym}{{\cY}^-}
\newcommand{\ypl}{{\cY}^+_{\lambda}}
\newcommand{\yml}{{\cY}^-_{\lambda}}
\title{Nonexistence of radial two-bubbles with opposite signs for~the energy-critical wave equation}
\author{Jacek Jendrej}
\address{\'Ecole Polytechnique, CMLS, 91128 Palaiseau, France}
\email{jacek.jendrej@polytechnique.edu}
\begin{document}

\begin{abstract}
We consider the focusing energy-critical wave equation in space dimension $N \geq 3$ for radial data.
We study two-bubble solutions, that is solutions which behave as a superposition of two decoupled radial ground states
(called bubbles) asymptotically for large positive times.
We prove that in this case these two bubbles must have the same sign.
The main tool is a sharp coercivity property of the energy functional near the family of ground states.
\end{abstract}
\maketitle

\section{Introduction}
\label{sec:intro}

\subsection{Setting of the problem and the main result}
\label{ssec:intro-setting}
Let $N \geq 3$ be the dimension of the space. For $\bs u_0 = (u_0, \dot u_0) \in \cE := \dot H^1(\bR^N) \times L^2(\bR^N)$, define the \emph{energy functional}
\begin{equation*}
  E(\bs u_0) = \int\frac 12|\dot u_0|^2 + \frac 12|\grad u_0|^2 - F(u_0)\ud x,
\end{equation*}
where $F(u_0) := \frac{N-2}{2N}|u_0|^\frac{2N}{N-2}$. Note that $E(\bs u_0)$ is well-defined due to the Sobolev Embedding Theorem.
The differential of $E$ is $\vD E(\bs u_0) = (-\Delta u_0 - f(u_0), \dot u_0)$, where $f(u_0) = |u_0|^\frac{4}{N-2}u_0$.

We consider the Cauchy problem for the energy critical wave equation:
\begin{empheq}{equation}
  \label{eq:nlw}
  \bigg\{
    \begin{aligned}
      \partial_t \bs u(t) &= J\circ \vD E(\bs u(t)), \\
      \bs u(t_0) &= \bs u_0 \in \cE.
    \end{aligned}
    \tag{NLW}
  \end{empheq}
  Here, $J := \begin{pmatrix}0 & \Id \\ -\Id & 0 \end{pmatrix}$ is the natural symplectic structure. This equation is often written in the form
    \begin{equation*}
      \partial_{tt} u = \Delta u + f(u).
    \end{equation*}

    Equation \eqref{eq:nlw} is locally well-posed in the space $\cE$, see for example \cite{GSV92} and \cite{ShSt94} (the defocusing case),
    as well as a complete review of the Cauchy theory in \cite{KeMe08} (for $N \in \{3, 4, 5\}$) and \cite{BCLPZ13} (for $N \geq 6$).
    In particular, for any initial data $\bs u_0 \in \cE$ there exists a maximal time of existence $(T_-, T_+)$, $-\infty \leq T_- < t_0 < T_+ \leq +\infty$,
    and a unique solution $\bs u \in C((T_-, T_+); \cE)$. In addition, the energy $E$ is a conservation law.
    In this paper we always assume that the initial data is radially symmetric. This symmetry is preserved by the flow.

    For functions $v \in \dot H^1$, $\dot v \in L^2$, $\bs v = (v, \dot v)\in \cE$ and $\lambda > 0$, we denote
    \begin{equation*}
      v_\lambda(x) := \frac{1}{\lambda^{(N-2)/2}} v\big(\frac{x}{\lambda}\big), \qquad \dot v_\uln\lambda(x) := \frac{1}{\lambda^{N/2}} \dot v\big(\frac{x}{\lambda}\big),\qquad\bs v_\lambda(x) := \big(v_\lambda, \dot v_\uln\lambda\big).
    \end{equation*}

    A change of variables shows that
    \begin{equation*}
      E\big((\bs u_0)_\lambda\big) = E(\bs u_0).
    \end{equation*}
    Equation~\eqref{eq:nlw} is invariant under the same scaling: if $\bs u(t) = (u(t), \dot u(t))$ is a solution of \eqref{eq:nlw} and $\lambda > 0$, then
    $
    t \mapsto \bs u\big((t-t_0)/\lambda\big)_\lambda
    $ is also a solution
    with initial data $(\bs u_0)_\lambda$ at time $t = 0$.
    This is why equation~\eqref{eq:nlw} is called \emph{energy-critical}.

    A fundamental object in the study of \eqref{eq:nlw} is the family of stationary solutions $\bs u(t) \equiv \pm\bs W_\lambda = (\pm W_\lambda, 0)$, where
    \begin{equation*}
      W(x) = \Big(1 + \frac{|x|^2}{N(N-2)}\Big)^{-(N-2)/2}.
    \end{equation*}
    The functions $W_\lambda$ are called \emph{ground states}. 
    They are the only radially symmetric solutions and, up to translation, the only positive solutions
    of the critical elliptic problem
    \begin{equation}
      \label{eq:elliptic}
    -\Delta u - f(u) = 0.
  \end{equation}
  Note however that classification of nonradial solutions of \eqref{eq:elliptic} is an open problem
  (see \cite{delPino} for details).

  Recall that the \emph{Soliton Resolution Conjecture} predicts that a generic bounded (in a suitable sense)
  solution of a hamiltonian system asymptotically decomposes as a sum of decoupled solitons and a dispersion.
  This belief is based mainly on the analysis of completely integrable systems, for instance \cite{EckSch83}.
  The only complete classification of the dynamical behaviour of a non-integrable hamiltonian system is the result of
  Duyckaerts, Kenig and Merle \cite{DKM4}, which we recall here for the reader's convenience:
  \begin{mainthm}[\cite{DKM4}]
    \label{thm:DKM}
    Let $N = 3$ and let $\bs u(t): [t_0, T_+) \to \cE$ be a radial solution of \eqref{eq:nlw}. Then one of the following holds:
    \begin{itemize}
      \item \textbf{Type I blow-up:} $T_+<\infty$ and 
        \begin{equation}
          \label{BUp}
          \lim_{t\to T_+} \|\bs u(t)\|_\cE =+\infty. 
        \end{equation} 
      \item \textbf{Type II blow-up:} $T_+<\infty$ and there exist $\bs v_0 \in \cE$, an integer $n\in \bN\setminus\{0\}$, and for all $j\in \{1,\ldots,n\}$, a sign $\iota_j\in \{\pm 1\}$, and a positive function $\lambda_j(t)$ defined for $t$ close to $T_+$ such that
        \begin{gather}
          \label{hyp_lambda_bup}
          \lambda_1(t)\ll \lambda_2(t)\ll \ldots \ll\lambda_{n}(t)\ll T_+-t\text{ as }t\to T_+\\
          \label{expansion_u_bup}
          \lim_{t\to T_+}
          \big\|\bs u(t)-\big(\bs v_{0}+\sum_{j=1}^n\iota_j \bs W_{\lambda_j(t)}\big)\big\|_\cE=0.
        \end{gather}
      \item \textbf{Global solution:} $T_+=+\infty$ and there exist a solution $\bs v_{\lin}$ of the linear wave equation, an integer $n\in \bN$, and for all $j\in \{1,\ldots,n\}$, a sign $\iota_j\in \{\pm 1\}$, and a positive function $\lambda_j(t)$ defined for large $t$ such that
        \begin{gather}
          \label{hyp_lambda}
          \lambda_1(t)\ll \lambda_2(t)\ll \ldots \ll\lambda_{n}(t)\ll t\text{ as }t\to +\infty\\
          \label{expansion}
          \lim_{t\to+\infty}
          \big\|\bs u(t)-\big(\bs v_{\lin}(t)+\sum_{j=1}^n\iota_j \bs W_{\lambda_j(t)}\big)\big\|_\cE=0.
        \end{gather}
    \end{itemize}
    \qed
  \end{mainthm}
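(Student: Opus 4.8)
The plan is to combine a \emph{sequential} soliton-resolution statement — a decomposition of the form~\eqref{expansion_u_bup} (resp.~\eqref{expansion}) valid along a well-chosen sequence of times — with a rigidity mechanism, grounded in exterior energy estimates, that upgrades it to a decomposition valid for all $t$ near $T_+$ (resp. all large $t$). The restriction to $N=3$ enters precisely through these exterior energy (``channel of energy'') estimates, which are available in odd space dimensions but fail in even ones.

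First I would separate the finite-time blow-up case $T_+<\infty$ from the global case $T_+=+\infty$; when $T_+<\infty$ the relevant dichotomy is whether $\|\bs u(t)\|_\cE\to\infty$ (Type I) or $\sup_{t<T_+}\|\bs u(t)\|_\cE<\infty$ (Type II), the bounded case then being treated in parallel with the global one. In the Type II and global cases, a Bahouri--G\'erard profile decomposition applied to $\bs u(t_n)$ along a suitable sequence $t_n\to T_+$ (resp. $t_n\to+\infty$), together with the fact that the only radial finite-energy solutions of the elliptic equation~\eqref{eq:elliptic} are $0$ and the $\pm W_\lambda$, produces a decomposition of the stated form \emph{along the sequence $t_n$}: a regular part $\bs v_0$ (resp. a free wave $\bs v_{\lin}$) plus finitely many decoupled, scale-separated bubbles $\iota_j\bs W_{\lambda_j(t_n)}$. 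This sequential statement is the content of the earlier classification work of Duyckaerts, Kenig and Merle, and I would take it as the point of departure.

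The core of the argument — and the main obstacle — is to remove the restriction to a sequence. The essential tool is the exterior energy lower bound for the free wave equation in $\bR^3$: if $\bs v_{\lin}$ is a nonzero radial finite-energy solution of $\partial_{tt}v=\Delta v$ with initial data supported in $\{|x|\geq R\}$, then, for $t\to+\infty$ or for $t\to-\infty$,
\[
  \liminf_{|t|\to\infty}\int_{|x|\geq |t|+R}|\grad_{t,x}v_{\lin}(t,x)|^2\ud x\ \gtrsim\ \|\bs v_{\lin}(0)\|_\cE^2 .
\]
From this one extracts the crucial rigidity statement: a finite-energy solution of~\eqref{eq:nlw} that is \emph{non-radiative} in one time direction (no exterior energy outside every ball, for all $t\geq 0$ say) must coincide, near each of its concentration scales, with a stationary solution $\pm W_\mu$. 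This is what forbids a bubble from being created and then annihilated — exactly the scenario that would prevent the sequential decomposition from holding for all times. Concretely, if the decomposition held along $t_n$ with $n$ bubbles but failed along another sequence $t_n'$, one would isolate, on the time interval between $t_n$ and $t_n'$ and using a modulation analysis near the multi-bubble set together with energy conservation, a piece of the solution behaving like a rescaled ground state yet carrying no exterior energy, contradicting the displayed inequality applied to the difference with that stationary profile. Making this rigorous — controlling the ``exterior profile'' of $\bs u(t)$, showing that its leading behaviour is governed by finitely many self-similar scales, and running the modulation argument uniformly in $t$ — is where essentially all the difficulty lies; a virial-type monotonicity argument alone does not suffice here, since the energy-critical scaling renders all bubble scales energetically equivalent.

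Finally, once $n$, the signs $\iota_j$, and the regular part ($\bs v_0$ resp. $\bs v_{\lin}$) are known to be independent of the admissible sequence, I would promote $\lambda_1(t),\ldots,\lambda_n(t)$ to continuous functions defined for $t$ near $T_+$ (resp. large $t$) by an implicit-function/modulation argument near the manifold $\{\bs v_0+\sum_{j=1}^n\iota_j\bs W_{\mu_j}:0<\mu_1\ll\cdots\ll\mu_n\}$, and upgrade the ordering and separation $\lambda_1(t)\ll\cdots\ll\lambda_n(t)\ll T_+-t$ (resp. $\ll t$) from the sequential statement to the limit $t\to T_+$ (resp. $t\to+\infty$) using the coercivity of $E$ near each individual bubble together with conservation of energy. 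This yields~\eqref{expansion_u_bup} (resp.~\eqref{expansion}) and completes the proof.
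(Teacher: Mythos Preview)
There is nothing to compare here: the paper does not prove this statement. Theorem~\ref{thm:DKM} is quoted verbatim from \cite{DKM4} as background, marked with a \qed and no argument; it is not part of the paper's own contributions. Your sketch is a reasonable outline of the strategy in \cite{DKM4} (profile decomposition along a sequence, exterior energy/channel estimates in odd dimension to obtain rigidity, then upgrading the sequential decomposition to a continuous one), but the present paper takes the result as a black box and only uses it to motivate the study of multi-bubbles.
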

%
  Of special interest are the solutions which are bounded in $\cE$ and which exhibit \emph{no dispersion}
  (that is, $\bs v_0 = 0$ or $\bs v\lin = 0$) in one or both time directions.
  One of the consequences of the \emph{energy channel estimates} in \cite{DKM4}
  is that in the case $N = 3$ the only solutions without any dispersion in both time directions are the stationary states
  $\bs W_\lambda$. This is in contrast with the case of completely integrable systems.
  
  In the present paper we are interested in solutions with no dispersion in one time direction, say for positive times.
  According to Theorem~\ref{thm:DKM}, for $N = 3$ such a solution has to behave asymptotically as a decoupled superposition
  of stationary states. Such solutions are called (pure) \emph{multi-bubbles} (or $n$-bubbles, where $n$ is the number of bubbles).
  By conservation of energy, if $\bs u(t)$ is an $n$-bubble, then
  \begin{equation}
    E(\bs u(t)) = n E(\bs W).
  \end{equation}
  The case $n = 1$ in dimension $N \in \{3, 4, 5\}$ was treated by Duyckaerts and Merle \cite{DM08},
  who obtained a complete classification of solutions of \eqref{eq:nlw} at energy level $E(\bs u(t)) = E(\bs W)$.
  In particular, the only $1$-bubbles are $\bs W_\lambda$, $\bs W_\lambda^-$ and $\bs W_\lambda^+$,
  where $\bs W^-$ and $\bs W^+$ are some special solutions converging exponentially to $\bs W$.
  The authors solve also the \emph{reconnection problem} by showing that for negative times
  $\bs W^-$ scatters and $\bs W^+$ blows up in norm $\cE$ in finite time.

  Solutions of \eqref{eq:nlw} satisfying \eqref{expansion_u_bup} or \eqref{expansion} with $\bs v_0 \neq 0$ or $\bs v\lin \neq 0$
  can exhibit non-trivial dynamical behaviour, see the results of Krieger, Nakanishi and Schlag \cite{KrScTa09},
  Hillairet and Rapha\"el \cite{HiRa12}, Donninger and Krieger \cite{DonKr13}, Donninger, Huang, Krieger and Schlag \cite{DHKS14}
  and the author \cite{moi15p}.

  In the present paper we address the case $n = 2$, and more specifically the situation when the two bubbles have opposite signs.
\begin{mainthm}
  \label{thm:deux-bulles}
  Let $N \geq 3$. There exists no radial solutions $\bs u: [t_0, T_+) \to \cE$ of \eqref{eq:nlw} such that
  \begin{equation}
    \label{eq:deux-bulles}
    \lim_{t \to T_+}\|\bs u(t) - \bs W_{\lambda_1(t)} + \bs W_{\lambda_2(t)}\|_\cE = 0
  \end{equation}
  and
  \begin{itemize}
    \item in the case $T_+ < +\infty$, $\lambda_1(t)\ll \lambda_2(t)\ll T_+-t\text{ as }t\to T_+$,
    \item in the case $T_+ = +\infty$, $\lambda_1(t)\ll \lambda_2(t)\ll t\text{ as }t\to +\infty$.
  \end{itemize}
\end{mainthm}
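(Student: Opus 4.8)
We argue by contradiction. The heuristic is that an \emph{opposite-sign} two-bubble configuration has energy \emph{strictly above} $2E(\bs W)$, by an amount comparable to $(\lambda_1/\lambda_2)^{(N-2)/2}$, whereas a pure two-bubble solution has energy exactly $2E(\bs W)$; thus the error term of the decomposition would have to lower the energy by that amount, which a sharp coercivity estimate for the Hessian of $E$ forbids. The argument is purely variational, so it works for all $N\ge 3$. Concretely, suppose $\bs u$ is as in the statement. For $t$ close to $T_+$, \eqref{eq:deux-bulles} and the implicit function theorem provide $C^1$ functions $0<\lambda_1(t)\ll\lambda_2(t)$ and a remainder $\bs g(t)\in\cE$ with
\[
  \bs u(t)=\bs W_{\lambda_1(t)}-\bs W_{\lambda_2(t)}+\bs g(t),\qquad \|\bs g(t)\|_\cE+\frac{\lambda_1(t)}{\lambda_2(t)}\to 0\ \ \text{as }t\to T_+,
\]
where $\bs g(t)$ is subject to two orthogonality conditions, one per bubble, that pin down $\lambda_1,\lambda_2$ by annihilating the scaling directions $\big(\tfrac{\vd}{\vd\lambda}W_\lambda\big|_{\lambda=\lambda_j},0\big)$. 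Write $\mu(t):=\lambda_1(t)/\lambda_2(t)$ and $\bs Q(t):=\bs W_{\lambda_1(t)}-\bs W_{\lambda_2(t)}$.

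Since $\pm\bs W_{\lambda_j}$ are stationary solutions, hence critical points of $E$, Taylor expansion gives
\[
  E(\bs u(t))=E(\bs Q(t))+\la\vD E(\bs Q(t)),\bs g(t)\ra+\tfrac12\la\vD^2E(\bs Q(t))\bs g(t),\bs g(t)\ra+\cR(t),\qquad |\cR(t)|=o\big(\|\bs g(t)\|_\cE^2\big).
\]
The decisive point is the interaction energy: using \eqref{eq:elliptic} and $\lambda_1\ll\lambda_2$, a careful expansion yields $E(\bs Q(t))=2E(\bs W)+\fb\,\mu(t)^{(N-2)/2}(1+o(1))$ with $\fb:=\int_{\bR^N}f(W)\,\vd x>0$, and the sign of $\fb$ is positive \emph{precisely because the two bubbles have opposite signs} (for equal signs the leading interaction changes sign). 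Moreover the scale separation gives $|\la\vD E(\bs Q(t)),\bs g(t)\ra|\lesssim \mu(t)^{(N-2)/2}\|\bs g(t)\|_\cE$. Since $E(\bs u(t))\equiv 2E(\bs W)$ for a pure two-bubble, we conclude
\[
  \tfrac12\la\vD^2E(\bs Q(t))\bs g(t),\bs g(t)\ra=-\fb\,\mu(t)^{(N-2)/2}(1+o(1))+o\big(\|\bs g(t)\|_\cE^2\big);
\]
in particular the Hessian is \emph{negative} along $\bs g(t)$, of size at least comparable to $\mu(t)^{(N-2)/2}$.

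The sharp coercivity property counteracts this. At a single ground state the form $\la\vD^2E(\bs W_\lambda)\cdot,\cdot\ra$ is not positive definite — the radial operator $-\Delta-f'(W)$ has one negative eigenvalue (giving, in the first-order formulation, an unstable mode $\cY^+_\lambda$ and a stable mode $\cY^-_\lambda$) and a one-dimensional radial kernel spanned by the scaling direction — but once these are removed one recovers a definite gap: there is $c>0$ such that for $\bs h$ orthogonal to the two scaling directions,
\[
  \la\vD^2E(\bs Q(t))\bs h,\bs h\ra\ \ge\ c\,\|\bs h\|_\cE^2-C\sum_{j=1}^2\big(|a_j^+|^2+|a_j^-|^2\big),
\]
where $a_j^\pm$ are the coordinates of $\bs h$ along $\cY^\pm_{\lambda_j}$; this follows by localizing $\bs h$ with a cutoff at the intermediate scale $\sqrt{\lambda_1\lambda_2}$, the cross terms being lower order by decoupling, and applying the single-bubble coercivity to each piece. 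Next, projecting \eqref{eq:nlw} onto $\cY^\pm_{\lambda_j}$ yields, for the coordinates $a_j^\pm(t)$ of $\bs g(t)$, nearly decoupled scalar ODEs $\tfrac{\vd}{\vd t}a_j^\pm=\pm\tfrac{\nu}{\lambda_j}a_j^\pm+O\big(\tfrac{1}{\lambda_j}(\|\bs g\|_\cE^2+\mu^{(N-2)/2})\big)$, with $\nu>0$ the instability rate. Since $\bs g$ stays uniformly small on $[t_0,T_+)$, integrating the stable equation forward and the unstable one backward from $T_+$ — reducing the cases $T_+<\infty$ and $T_+=\infty$ to a bounded interval via $\vd s=\vd t/\lambda_j$ — gives $|a_j^\pm(t)|\lesssim\sup_{s\ge t}\big(\|\bs g(s)\|_\cE^2+\mu(s)^{(N-2)/2}\big)$, and a bootstrap with the energy identity upgrades this, along a sequence $t_n\to T_+$ chosen (since $\mu$ is continuous and $\mu\to 0$) so that $\mu(t_n)=\sup_{s\ge t_n}\mu(s)$, to $\sum_j(|a_j^+|^2+|a_j^-|^2)=o\big(\mu(t_n)^{(N-2)/2}\big)$. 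Inserting this and the energy identity into the coercivity inequality with $\bs h=\bs g(t_n)$ yields $c\,\|\bs g(t_n)\|_\cE^2\le -2\fb\,\mu(t_n)^{(N-2)/2}+o\big(\mu(t_n)^{(N-2)/2}\big)+o\big(\|\bs g(t_n)\|_\cE^2\big)<0$ for $n$ large, a contradiction.

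The main obstacle is the interplay of the last two steps: one must ensure that the loss coming from the negative eigenvalue directions is \emph{strictly smaller} than the favorable interaction $\fb\,\mu^{(N-2)/2}$. Pushing all errors — the localization errors in the sharp coercivity, the modulation and ODE errors, and the bootstrap — down to $o(\mu^{(N-2)/2})$, uniformly up to $T_+$ and especially in the finite-time blow-up regime where $\lambda_j(t)\to 0$ and $\mu$ need not be monotone, is where the real work lies. The opposite-sign hypothesis enters only through the sign of $\fb$, and is essential: with equal signs the same computation is consistent, and such solutions are not excluded.
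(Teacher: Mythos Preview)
Your strategy matches the paper's --- expand the energy around the two-bubble, use the strictly positive interaction $\gtrsim\mu^{(N-2)/2}$ coming from the opposite signs, and argue that coercivity of the Hessian forbids $\bs g$ from compensating --- but there is a real gap in your treatment of the \emph{stable} modes $a_j^-$. You claim that ``integrating the stable equation forward and the unstable one backward from $T_+$'' yields $|a_j^\pm(t)|\lesssim\sup_{s\ge t}\big(\|\bs g(s)\|_\cE^2+\mu(s)^{(N-2)/2}\big)$. This is correct for $a_j^+$ (Duhamel backward, using $\int^{T_+}\vd\tau/\lambda_j=+\infty$), but for $a_j^-$ the stable ODE integrated forward only gives
\[
  |a_j^-(t)|\ \lesssim\ e^{-\nu\int_{t_0}^t\vd\tau/\lambda_j}\,|a_j^-(t_0)|\ +\ \sup_{t_0\le s\le t}\big(\|\bs g(s)\|_\cE^2+\mu(s)^{(N-2)/2}\big),
\]
which involves the \emph{past} supremum. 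Since neither $\|\bs g\|_\cE$ nor $\mu$ is monotone, your choice of $t_n$ with $\mu(t_n)=\sup_{s\ge t_n}\mu(s)$ says nothing about the past, and the bootstrap to $\sum_j|a_j^\pm|^2=o\big(\mu(t_n)^{(N-2)/2}\big)$ does not close. Integrating the stable equation backward is no better, as the mode grows in that direction.

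The paper handles the two stable modes asymmetrically, and this is the core of the argument. At scale $\lambda_2$ it \emph{absorbs} the stable mode into the modulation: it decomposes around a point $\bs U_{\lambda_2}^{a_2}$ of a one-parameter stable manifold (constructed in Section~\ref{ssec:coer-constr}), adds $a_2$ as a third modulation parameter, and imposes $\la\alpha_{\lambda_2}^-,\bs g\ra=0$ as a third orthogonality condition. This in turn forces the use of a \emph{refined} unstable functional $\beta_{\lambda_2}^{a_2}=-\tfrac{1}{2a_2}\vD E(\bs U_{\lambda_2}^{a_2})$ in place of $\alpha_{\lambda_2}^+$, so that its ODE error is $O(\cN^2)$ rather than $O(|a_2|\cN)$ (Proposition~\ref{prop:bulles-dtb}; see Remark~\ref{rem:necessaire} for why the naive functional would not suffice). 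At scale $\lambda_1$ the paper does \emph{not} show $a_1^-$ is small: at the last times $t_m$ where $\cN=2^{-m}$, smallness of the two unstable modes plus coercivity forces $|a_1^-(t_m)|\gtrsim\cN(t_m)$, and then the stable ODE gives exponential decay of $a_1^-$, hence of $\cN$, forward in time; integrating $|\lambda_j'|\lesssim\cN$ shows that $\log\lambda_1$ and $\log\lambda_2$ converge, contradicting $\mu\to0$. So the contradiction is not ``the Hessian is both $\ge-o(\mu^{(N-2)/2})$ and $\le-c\mu^{(N-2)/2}$'' as you propose, but rather ``$\mu$ must simultaneously converge to a positive limit and tend to $0$''. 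A secondary technical point: since $W\notin L^2$ for $N\le4$, the paper replaces $\bs W_{\lambda_1}$ by a cutoff $\bs V_R(\lambda_1,\lambda_2)$ supported in $|x|\le R\sqrt{\lambda_1\lambda_2}$; your estimate $|\la\vD E(\bs Q),\bs g\ra|\lesssim\mu^{(N-2)/2}\|\bs g\|_\cE$ and the interaction computation both require this localization to be made rigorous.
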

\begin{remark}
  There exist no examples of solutions of \eqref{eq:nlw} such that expansion \eqref{expansion_u_bup} or \eqref{expansion}
  holds with $n > 1$ (with or without dispersion). Note however that spatially decoupled non-radial multi-bubbles
  were recently constructed by Martel and Merle \cite{MaMe15p} using the Lorentz transform.
  In their setting, the choice of signs seems to have little importance.
  
  On the other side, Theorem~\ref{thm:deux-bulles} is, to my knowledge, the only result proving non-existence
  of solutions of type multi-bubble for \eqref{eq:nlw} in some specific cases. Existence of pure two-bubbles with the same sign is an open problem.
\end{remark}
\begin{remark}
In the case of corotational wave maps, where existence of pure two-bubbles with the same orientation
is easily excluded for variational reasons. Our proof might be seen as an adaptation of this argument
to the case where the energy functional is not coercive.

Note that for corotational wave maps existence of pure two-bubbles with opposite orientations is an open problem,
related to the \emph{threshold conjecture} for degree $0$ equivariant wave maps, see C\^ote, Kenig, Lawrie and Schlag \cite{CKLS15}.
\end{remark}
\begin{remark}
  For the corresponding slightly sub-critical elliptic problem positive multi-bubbles cannot form,
  whereas multi-bubbles with alternating signs exist, see Li \cite{Li95}, Pistoia and Weth \cite{Pistoia07}.
\end{remark}

\subsection{Outline of the proof}

  \paragraph{\textbf{Step 1.}} The linearization of \eqref{eq:nlw} around $\bs W_\lambda$ has a stable direction $\cY_\lambda^-$.
  We construct \emph{stable manifolds} $\bs U_\lambda^a$ which are forward invariant sets
  tangent to $\cY_\lambda^-$ at $\bs W_\lambda$. They have good regularity and decay properties.
  They allow to define the \emph{refined unstable mode} $\beta_\lambda^a \in \cE^*$ with the following crucial property.

Decompose any initial data close to the family of stationary states as $\bs u_0 = \bs U_\lambda^a + \bs g$,
with $\bs g$ satisfying natural orthogonality conditions by an appropriate choice of $\lambda$ and $a$. We have the alternative:
\begin{itemize}
  \item (Coercivity) $|\la\beta_\lambda^a, \bs g\ra| \lesssim \|\bs g\|_\cE^2$, which implies $E(\bs u_0) - E(\bs W) \gtrsim \|\bs g\|_\cE^2$,
  \item (Destabilization) $|\la \beta_\lambda^a, \bs g\ra| \gg \|\bs g\|_\cE^2$, which implies the exponential growth of $|\la \beta_\lambda^a, \bs g\ra|$.
\end{itemize}
In other words, $\beta_\lambda^a$ provides an explicit way of controlling how solutions which violate
the coercivity of energy leave a neighbourhood of the stationary states for positive times.
\paragraph{\textbf{Step 2.}}Let $\bs u(t): [t_0; T_+) \to \cE$ be a solution of \eqref{eq:nlw} which satisfies \eqref{eq:deux-bulles}.
  As already mentioned, this implies that
  \begin{equation}
    \label{eq:energy}
    E(\bs u) = 2E(\bs W).
  \end{equation}
 We decompose for any $t \in [t_0, T_+)$:
$$
\bs u(t) = \bs U_{\lambda_2(t)}^{a_2(t)} - \bs W_{\lambda_1(t)} + \bs g(t),\qquad \lambda_1(t) \ll \lambda_2(t),
$$
with $\bs g(t)$ satisfying natural orthogonality conditions (in fact we use a suitable localization of $\bs W_{\lambda_1(t)}$).
From the Taylor formula we obtain
\begin{equation}
  \label{eq:taylor}
  E(\bs u) = E(\bs U_{\lambda_2}^{a_2} - \bs W_{\lambda_1}) + \la \vD E(\bs U_{\lambda_2}^{a_2} - \bs W_{\lambda_1}, \bs g\ra
  + \frac 12 \la \vD^2 E(\bs U_{\lambda_2}^{a_2} - \bs W_{\lambda_1})\bs g, \bs g\ra + o(\|\bs g\|_\cE^2).
\end{equation}
An explicit key computation shows that
$$
E(\bs U_{\lambda_2}^{a_2} - \bs W_{\lambda_1}) - 2E(\bs W) \gtrsim (\lambda_1 / \lambda_2)^\frac{N-2}{2}.
$$
It is at this point that the sign condition is decisive.
\paragraph{\textbf{Step 3.}}
We prove that the assumption that $\bs u(t)$ stays close to a $2$-bubble implies that
$|\la\beta_{\lambda_2}^{a_2}, \bs g\ra| \lesssim \|\bs g\|_\cE^2 + (\lambda_1 / \lambda_2)^\frac{N-2}{2}$.
This allows to show that the second term in the expansion \eqref{eq:taylor} is $\ll \|\bs g\|_\cE^2 + (\lambda_1 / \lambda_2)^\frac{N-2}{2}$.

Finally, by an elementary analysis of the linear stable and unstable modes we can prove that, at least along a sequence of times,
the third term of the expansion \eqref{eq:taylor} is coercive, that is $\gtrsim \|\bs g\|_\cE^2$. Inserted in \eqref{eq:taylor},
this leads to $E(\bs u) > 2E(\bs W)$, contradicting \eqref{eq:energy}.

\subsection{Acknowledgements}
\label{ssec:intro-merci}
This paper has been prepared as a part of my \mbox{Ph.~\!D.} under supervision of Y.~Martel and F.~Merle.
I would like to express my gratitude for their constant support and many helpful discussions.
A part of this work has been realized while I was visiting the University of~Chicago.
I would like to thank C.~Kenig and the Departement of~Mathematics for their hospitality.
I am grateful to R.~C\^ote, T.~Duyckaerts and K.~Nakanishi for useful remarks.
The author has been supported by the ERC~grant $291214$ BLOWDISOL.

\subsection{Notation}
\label{ssec:intro-notation}
We introduce the infinitesimal generators of the scale change
$$\Lambda_s := \big(\frac{N}{2} - s\big) + x\cdot\grad.$$
For $s = 1$ we omit the subscript and write $\Lambda = \Lambda_1$.
We denote $\Lambda_\cE$, $\Lambda_\cF$ and $\Lambda_{\cE^*}$ the infinitesimal generators
of the scaling which is critical for a given norm, that is
$$\Lambda_\cE = (\Lambda, \Lambda_0),\quad \Lambda_\cF = (\Lambda_0, \Lambda_{-1}),\quad \Lambda_{\cE^*} = (\Lambda_{-1}, \Lambda_0).$$
We use the subscript $\cdot_\lambda$ to denote rescaling with characteristic length $\lambda$, critical for a norm which will be known from the context.

We introduce the following notation for some frequently used function spaces:
$X^s := \dot H_\tx{rad}^{s+1} \cap \dot H_\tx{rad}^1$ for $s \geq 0$,
$Y^k := H^k(1+|x|^k)$ for $k \in \bN$,
$\cE := \dot H^1_\tx{rad} \times L^2_\tx{rad}$, $\cF := L^2_\tx{rad} \times \dot H^{-1}_\tx{rad}$.
Notice that $\cE^* \simeq \dot H^{-1}_\tx{rad}\times L^2_\tx{rad}$ through the natural isomorphism given by the distributional pairing.
In the sequel we will omit the subscript and write $\dot H^1$ for $\dot H^1_\tx{rad}$ etc.
We denote $J := \begin{pmatrix}0 & \Id \\ -\Id & 0 \end{pmatrix}$; note that $J\cE^* = \cF$.

For a function space $\cA$, $O_\cA(m)$ denotes any $a \in \cA$ such that $\|a\|_\cA \leq Cm$ for some constant $C > 0$.
We denote $B_\cA(x_0, \eta)$ an open ball of center $x_0$ and radius $\eta$ in the space $\cA$.
If $\cA$ is not specified, it means that $\cA = \bR$.

For a radial function $g: \bR^N \to \bR$ and $r \geq 0$ we denote $g(r)$ the value of $g(x)$ for $|x| = r$.
\section{Sharp coercivity properties near $\bs W_\lambda$}
\label{sec:coer}
\subsection{Properties of the linearized operator}
\label{ssec:coer-lin}
Linearizing \eqref{eq:nlw} around $\bs W$, $\bs u = \bs W + \bs g$, one obtains
\begin{equation*}
  \partial_t \bs g = J\circ\vD^2 E(\bs W)\bs g = \begin{pmatrix} 0 & \Id \\ -L & 0\end{pmatrix} \bs g,
\end{equation*}
where $L$ is the Schr\"odinger operator
$$Lg := (-\Delta - f'(W))g.$$
Notice that $L(\Lambda W) = \dd\lambda\rstr_{\lambda = 1}\big(-\Delta W_\lambda - f(W_\lambda)\big) = 0$.
It is known that $L$ has exactly one strictly negative simple eigenvalue which we denote $-\nu^2$ (we take $\nu > 0$).
We denote the corresponding positive eigenfunction $\cY$, normalized so that $\|\cY\|_{L^2} = 1$.
By elliptic regularity $\cY$ is smooth and by Agmon estimates it decays exponentially.
Self-adjointness of $L$ implies that
\begin{equation}
  \label{eq:YLW}
  \la \cY, \Lambda W\ra = 0.
\end{equation}
  Fix $\cZ\in C_0^\infty$ such that
  \begin{equation}
    \label{eq:Z}
    \la \cZ, \Lambda W\ra > 0, \qquad \la \cZ, \cY\ra = 0.
  \end{equation}

  We have the following linear (localized) coercivity result, similar to \cite[Lemma 2.1]{MaMe15p}.
  \begin{lemma}
    \label{lem:coer-lin-coer}
    There exist constants $c, C > 0$ such that
    \begin{itemize}
      \item for all $g \in \dot H^1$ radially symmetric there holds
        \begin{equation}
          \label{eq:coer-lin-coer-1}
      \la g, Lg\ra = \int_{\bR^N}|\grad g|^2 \ud x - \int_{\bR^N}f'(W)|g|^2\ud x \geq c\int_{\bR^N}|\grad g|^2 \ud x -C\big(\la \cZ, g\ra^2 + \la \cY, g\ra^2\big),
    \end{equation}
  \item if $r_1 > 0$ is large enough, then for all $g \in \dot H^1_\tx{rad}$ there holds
    \begin{equation}
      \label{eq:coer-lin-coer-2}
      (1-2c)\int_{|x|\leq r_1}|\grad g|^2 \ud x + c\int_{|x|\geq r_1}|\grad g|^2\ud x - \int_{\bR^N}f'(W)|g|^2\ud x \geq -C\big(\la \cZ, g\ra^2 + \la \cY, g\ra^2\big),
    \end{equation}
  \item if $r_2 > 0$ is small enough, then for all $g \in \dot H^1_\tx{rad}$ there holds
    \begin{equation}
      \label{eq:coer-lin-coer-3}
      (1-2c)\int_{|x|\geq r_2}|\grad g|^2 \ud x + c\int_{|x|\leq r_2}|\grad g|^2\ud x - \int_{\bR^N}f'(W)|g|^2\ud x \geq -C\big(\la \cZ, g\ra^2 + \la \cY, g\ra^2\big).
    \end{equation}
\end{itemize}
  \end{lemma}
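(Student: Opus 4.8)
The plan is to reduce all three inequalities to one "strict coercivity modulo two bad directions" statement, and then pass between them by soft compactness arguments, using Hardy's inequality to compensate for the fact that $L$ has no spectral gap above $0$.

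\textbf{Step 1 (the core estimate).} I would first prove that there is $c_1\in(0,1]$ such that $\la h,Lh\ra\geq c_1\|h\|_{\dot H^1}^2$ for every radial $h\in\dot H^1$ with $\la h,\cY\ra=\la h,\cZ\ra=0$. Consider the minimization of $\la h,Lh\ra$ over the unit sphere of $\dot H^1_\tx{rad}\cap\{\cY,\cZ\}^\perp$, and call $m$ the infimum; $m\geq 0$ because $\cY$ spans the only negative eigenspace of $L$. The analytic input is that $g\mapsto\int f'(W)|g|^2$ is weakly continuous on $\dot H^1_\tx{rad}$: since $f'(W)(x)\lesssim(1+|x|)^{-4}$ one gets $\int_{|x|\geq R}f'(W)|g|^2\lesssim R^{-2}\int|x|^{-2}|g|^2\lesssim R^{-2}\|g\|_{\dot H^1}^2$ by Hardy's inequality, so the tails are uniformly small, while on each ball $\dot H^1(\bR^N)\hookrightarrow L^2(B_R)$ is compact; the functionals $\la\cZ,\cdot\ra$ (compact support) and $\la\cY,\cdot\ra$ (exponential decay, again via Hardy) are weakly continuous for the same reason. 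Hence a minimizing sequence converges weakly to a minimizer; if $m=0$ this minimizer $h_\infty$ is nonzero and of unit $\dot H^1$ norm, and its Euler--Lagrange equation reads $Lh_\infty\in\tx{span}(\cY,\cZ)$. Pairing with $\cY$ (using $L\cY=-\nu^2\cY$ and $\la\cZ,\cY\ra=0$) and then with $\Lambda W\in\ker L$ (using $\la\cZ,\Lambda W\ra>0$) forces $Lh_\infty=0$; by non-degeneracy of $\bs W$ the radial kernel of $L$ is $\tx{span}(\Lambda W)$, so $h_\infty$ is a multiple of $\Lambda W$, and $\la h_\infty,\cZ\ra=0$ then gives $h_\infty=0$, a contradiction; hence $c_1>0$. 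Writing an arbitrary radial $g$ as $g=a\cY+b\cZ+h$ with $a=\la g,\cY\ra$ and $h\perp\cY,\cZ$, and expanding $\la g,Lg\ra$ (the cross term $\la\cY,Lh\ra=-\nu^2\la\cY,h\ra$ vanishes, and $|\la\cZ,Lh\ra|\leq\|L\cZ\|_{\dot H^{-1}}\|h\|_{\dot H^1}$ is absorbed by $c_1\|h\|_{\dot H^1}^2$), one obtains \eqref{eq:coer-lin-coer-1}.

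\textbf{Step 2 (localization).} I would prove \eqref{eq:coer-lin-coer-2} by contradiction. If it fails for the chosen $c$, then for each $n$ there are $r_1^{(n)}\geq n$ and $g_n$ with $\|g_n\|_{\dot H^1}=1$ and
\[
(1-2c)\int_{|x|\leq r_1^{(n)}}|\grad g_n|^2 + c\int_{|x|\geq r_1^{(n)}}|\grad g_n|^2 - \int f'(W)|g_n|^2 < -n\big(\la\cZ,g_n\ra^2 + \la\cY,g_n\ra^2\big).
\]
Since the two gradient integrals sum to $1$, the gradient part equals $(1-2c)-(1-3c)\int_{|x|\geq r_1^{(n)}}|\grad g_n|^2\geq c$ (recall $c<1/3$), so the inequality forces both $\la\cZ,g_n\ra,\la\cY,g_n\ra\to 0$ and $\int f'(W)|g_n|^2>c$; extracting $g_n\wto g_\infty$, weak continuity gives $\int f'(W)|g_\infty|^2\geq c>0$, hence $g_\infty\neq 0$, and $\la\cZ,g_\infty\ra=\la\cY,g_\infty\ra=0$. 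As $r_1^{(n)}\to\infty$, for each fixed $R$ and $n$ large $\int_{|x|\leq r_1^{(n)}}|\grad g_n|^2\geq\int_{|x|\leq R}|\grad g_n|^2$, so weak lower semicontinuity yields $\liminf_n\int_{|x|\leq r_1^{(n)}}|\grad g_n|^2\geq\|g_\infty\|_{\dot H^1}^2$, i.e. $\limsup_n\int_{|x|\geq r_1^{(n)}}|\grad g_n|^2\leq 1-\|g_\infty\|_{\dot H^1}^2$. Passing to the limit gives $\int f'(W)|g_\infty|^2\geq c+(1-3c)\|g_\infty\|_{\dot H^1}^2$, i.e. $\la g_\infty,Lg_\infty\ra\leq 3c\|g_\infty\|_{\dot H^1}^2-c$. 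But $g_\infty\perp\cY,\cZ$, so Step 1 gives $\la g_\infty,Lg_\infty\ra\geq c_1\|g_\infty\|_{\dot H^1}^2$, whence $(c_1-3c)\|g_\infty\|_{\dot H^1}^2\leq -c$, impossible once $c\leq c_1/3$. The proof of \eqref{eq:coer-lin-coer-3} is identical with $r_1^{(n)}\to\infty$ replaced by $r_2^{(n)}\to 0$ and the inner and outer regions exchanged. One finally fixes $c$ small enough (at most the minimum of the constant produced in Step 1 and $c_1/3$, in particular $c<1/3$) and $C$ large enough for all three inequalities to hold with this pair.

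\textbf{Expected main obstacle.} The only genuine difficulty is that $L$ has no spectral gap above $0$ (the bottom of its essential spectrum is $0$ since $f'(W)$ decays) and that $\dot H^1_\tx{rad}$ does not embed in $L^2$, so minimizing and contradiction sequences can lose compactness by escaping to spatial infinity. This is handled uniformly by Hardy's inequality, which turns the decay $f'(W)\lesssim|x|^{-4}$ into a quantitative smallness of $\int_{|x|\geq R}f'(W)|g|^2$, and by using the compactly supported $\cZ$ — with $\la\cZ,\Lambda W\ra>0$ and $\la\cZ,\cY\ra=0$ — as a surrogate for the zero mode $\Lambda W$, which is necessary because $\Lambda W\notin L^2$ when $N\in\{3,4\}$. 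The rest is routine.
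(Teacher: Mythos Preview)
Your argument is correct, and it takes a genuinely different route from the paper's.

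For \eqref{eq:coer-lin-coer-1} the paper simply cites \cite[Lemma 6.1]{moi15p} and \cite[Proposition 5.5]{DM08}, whereas you give a self-contained variational proof (minimize on the unit sphere of $\dot H^1_\tx{rad}\cap\{\cY,\cZ\}^\perp$, use weak continuity of $\int f'(W)|g|^2$ via Hardy, and rule out $m=0$ by the Euler--Lagrange equation and radial non-degeneracy of $W$). One small point you skate over: the claim that the weak limit $h_\infty$ has unit $\dot H^1$ norm is not automatic; it follows because $\int f'(W)|h_\infty|^2=\lim\int f'(W)|h_n|^2=1$, hence $\la h_\infty,Lh_\infty\ra=\|h_\infty\|_{\dot H^1}^2-1$, and nonnegativity of the form on $\cY^\perp$ then forces $\|h_\infty\|_{\dot H^1}^2\geq 1$.

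For \eqref{eq:coer-lin-coer-2} and \eqref{eq:coer-lin-coer-3} the contrast is sharper. The paper proceeds \emph{constructively}: it introduces the sharp cutoffs $\Pi_r,\Psi_r$ (freezing $g$ at the value $g(r)$ on one side of the sphere $|x|=r$), applies the already-proved \eqref{eq:coer-lin-coer-1} to the relevant piece ($\Psi_{r_1}g$ for large $r_1$, $\Pi_{r_2}g$ for small $r_2$), and then controls all the cross terms by hand using the Strauss pointwise bound $|g(r)|\lesssim r^{-(N-2)/2}\|g\|_{\dot H^1}$, the decay $f'(W)\sim r^{-4}$, and the decay of $\cY,\cZ$. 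You instead run a soft contradiction/compactness argument: normalize, extract a weak limit, and play lower semicontinuity of the inner gradient integral against the core coercivity $c_1$ to get a numerical contradiction once $c\leq c_1/3$. The paper's approach is more explicit (in principle it gives effective thresholds for $r_1,r_2$ and an effective $C$) and is closer to how the localized estimates are later combined in Lemma~\ref{lem:bulles-coer}; your approach is shorter, requires no clever splitting, and isolates exactly the two analytic inputs that matter (Hardy for tails, Rellich on balls).
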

  \begin{proof}
    We will prove \eqref{eq:coer-lin-coer-2} and \eqref{eq:coer-lin-coer-3}.
    For a proof of \eqref{eq:coer-lin-coer-1} we refer to \cite[Lemma 6.1]{moi15p}, see also \cite[Proposition 5.5]{DM08} for a different formulation.

    We define the projections $\Pi_r, \Psi_r: \dot H^1 \to \dot H^1$:
    \begin{equation*}
      (\Pi_r g)(x) := \Big\{
        \begin{aligned}
          g(r) \qquad &\text{if }|x| \leq r, \\
          g(x) \qquad &\text{if }|x| \geq r,
        \end{aligned} \qquad
      (\Psi_r g)(x) := \Big\{
        \begin{aligned}
          g(x) - g(r) \qquad &\text{if }|x| \leq r, \\
          0 \qquad &\text{if }|x| \geq r
        \end{aligned}
    \end{equation*}
    (thus $\Pi_r + \Psi_r = \Id$).

    Applying \eqref{eq:coer-lin-coer-1} to $\Psi_{r_1}g$ with $c$ replaced by $3c$ and $C$ replaced by $\frac C2$ we get
    \begin{equation}
      \label{eq:lin-coer-dem-1}
      \begin{aligned}
        (1-2c)\int_{|x| \leq r_1}|\grad g|^2 \ud x &= (1-2c)\int_{\bR^N}|\grad(\Psi_{r_1}g)|^2 \ud x \\
        &\geq (1+c)\int_{\bR^N}f'(W)|\Psi_{r_1}g|^2\ud x - \frac C2\big(\la \cZ, \Psi_{r_1}g\ra^2 + \la \cY, \Psi_{r_1}g\ra^2\big).
    \end{aligned}
    \end{equation}
    By Sobolev and H\"older inequalities we have
    \begin{equation}
      \label{eq:lin-coer-dem-2}
      \begin{aligned}
      \int_{|x| \geq r_1}f'(W)|g|^2 \ud x &= \int_{|x| \geq r_1}f'(W)|\Pi_{r_1}g|^2 \ud x \\
      &\lesssim \|f'(W)\|_{L^\frac N2(|x| \geq r_1)}\cdot \|\Pi_{r_1}g\|_{\dot H^1}^2 \leq \frac c4 \int_{|x| \geq r_1}|\grad g|^2 \ud x
    \end{aligned}
    \end{equation}
    if $r_1$ is large enough.

    In the region $|x| \leq r_1$ we apply the pointwise inequality
    \begin{equation}
      \label{eq:lin-coer-dem-3}
      |g(x)|^2 \leq (1+c)|(\Psi_{r_1}g)(x)|^2 + (1 + c^{-1})|g(r_1)|^2,\qquad |x| \leq r_1.
    \end{equation}
    Recall that by the Strauss Lemma \cite{strauss77}, for a radial function $g$ there holds
    \begin{equation*}
      |g(r_1)| \lesssim \|\Pi_{r_1}g\|_{\dot H^1}\cdot r_1^{-\frac{N-2}{2}}.
    \end{equation*}
    Since $f'(W(r)) \sim r^{-4}$ as $r \to +\infty$, we have
    \begin{equation*}
      \int_{|x|\leq r_1}f'(W)\ud x \ll r_1^{N-2},\qquad \text{as }r_1 \to +\infty,
    \end{equation*}
    hence
    \begin{equation}
      \label{eq:lin-coer-dem-4}
      \int_{|x|\leq r_1}f'(W)\cdot(1 + c^{-1})|g(r_1)|^2\ud x \leq \frac c4 \int_{|x| \geq r_1}|\grad g|^2 \ud x
    \end{equation}
    if $r_1$ is large enough.

    Estimates \eqref{eq:lin-coer-dem-2}, \eqref{eq:lin-coer-dem-3} and \eqref{eq:lin-coer-dem-4} yield
    \begin{equation}
      \label{eq:lin-coer-dem-5}
      \int_{\bR^N} f'(W)|g|^2 \ud x \leq (1+c)\int_{\bR^N} f'(W)|(\Psi_{r_1}g)(x)|^2\ud x + \frac{c}{2} \int_{|x| \geq r_1}|\grad g|^2 \ud x.
    \end{equation}
    Using the fact that $\cY \in L^1 \cap L^\frac{2N}{N+2}$ we obtain
    \begin{equation*}
      |\la \cY, \Pi_{r_1}g\ra| \lesssim \|\Pi_{r_1}g\|_{\dot H^1}\cdot r_1^{-\frac{N-2}{2}} + \int_{|x| \geq r_1}\cY|g|\ud x \lesssim (r_1^{-\frac{N-2}{2}} + \|\cY\|_{L^\frac{2N}{N+2}(|x| \geq r_1)})\|\Pi_{r_1} g\|_{\dot H^1},
    \end{equation*}
    hence
    \begin{equation}
      \label{eq:lin-coer-dem-6}
      \frac C2\la \cY, \Psi_{r_1}g\ra^2 \leq C\la \cY, g\ra^2 + C\la \cY, \Pi_{r_1} g\ra^2 \leq C\la \cY, g\ra^2 + \frac c4\int_{|x| \geq r_1}|\grad g|^2 \ud x,
    \end{equation}
    provided that $r_1$ is chosen large enough. Similarly,
    \begin{equation}
      \label{eq:lin-coer-dem-7}
      \frac C2\la \cZ, \Psi_{r_1}g\ra^2 \leq C\la \cZ, g\ra^2 + C\la \cZ, \Pi_{r_1} g\ra^2 \leq C\la \cZ, g\ra^2 + \frac c4\int_{|x| \geq r_1}|\grad g|^2 \ud x.
    \end{equation}
    Estimate \eqref{eq:coer-lin-coer-2} follows from \eqref{eq:lin-coer-dem-1}, \eqref{eq:lin-coer-dem-5}, \eqref{eq:lin-coer-dem-6} and \eqref{eq:lin-coer-dem-7}.

    We turn to the proof of \eqref{eq:coer-lin-coer-3}.
    Applying \eqref{eq:coer-lin-coer-1} to $\Pi_{r_2}g$ with $c$ replaced by $3c$ and $C$ replaced by $\frac C2$ we get
    \begin{equation}
      \label{eq:lin-coer-dem-11}
      \begin{aligned}
        (1-3c)\int_{|x| \geq r_2}|\grad g|^2 \ud x &= (1-3c)\int_{\bR^N}|\grad(\Pi_{r_2}g)|^2 \ud x \\
        &\geq \int_{\bR^N}f'(W)|\Pi_{r_2}g|^2\ud x - \frac C2\big(\la \cZ, \Pi_{r_2}g\ra^2 + \la \cY, \Pi_{r_2}g\ra^2\big).
    \end{aligned}
    \end{equation}
    By Sobolev and H\"older inequalities we have for $r_2$ small enough
    \begin{equation}
      \label{eq:lin-coer-dem-12}
      \int_{|x| \leq r_2}f'(W)|g|^2 \ud x \leq \frac c2 \int_{\bR^N}|\grad g|^2 \ud x.
    \end{equation}
    By definition of $\Pi_r$ there holds
    \begin{equation*}
      \int_{|x| \geq r_2}f'(W)|g|^2 \ud x \leq \int_{\bR^N}f'(W)|\Pi_{r_2}g|^2\ud x,
    \end{equation*}
    hence \eqref{eq:lin-coer-dem-11} and \eqref{eq:lin-coer-dem-12} imply
    \begin{equation}
      \label{eq:lin-coer-dem-13}
      (1-2c)\int_{|x| \geq r_2}|\grad g|^2 \ud x + \frac c2\int_{|x| \leq r_2}|\grad g|^2 \ud x \geq
      \int_{\bR^N}f'(W)|g|^2\ud x - \frac C2\big(\la \cZ, \Pi_{r_2}g\ra^2 + \la \cY, \Pi_{r_2}g\ra^2\big).
    \end{equation}
    Using the fact that $\cY \in L^\frac{2N}{N+2}$ we obtain
    \begin{equation*}
      |\la \cY, \Psi_{r_2}g\ra| \lesssim \int_{|x| \leq r_2}\cY|g|\ud x \lesssim \|\cY\|_{L^\frac{2N}{N+2}(|x| \leq r_2)}\|\Psi_{r_2} g\|_{\dot H^1},
    \end{equation*}
    hence
    \begin{equation}
      \label{eq:lin-coer-dem-14}
      \frac C2\la \cY, \Pi_{r_2}g\ra^2 \leq C\la \cY, g\ra^2 + C\la \cY, \Psi_{r_2} g\ra^2 \leq C\la \cY, g\ra^2 + \frac c4\int_{|x| \leq r_2}|\grad g|^2 \ud x,
    \end{equation}
    provided that $r_2$ is chosen small enough. Similarly,
    \begin{equation}
      \label{eq:lin-coer-dem-15}
      \frac C2\la \cZ, \Pi_{r_2}g\ra^2 \leq C\la \cZ, g\ra^2 + C\la \cZ, \Psi_{r_2} g\ra^2 \leq C\la \cZ, g\ra^2 + \frac c4\int_{|x| \leq r_2}|\grad g|^2 \ud x.
    \end{equation}
    Estimate \eqref{eq:coer-lin-coer-3} follows from \eqref{eq:lin-coer-dem-13}, \eqref{eq:lin-coer-dem-14} and \eqref{eq:lin-coer-dem-15}.

  \end{proof}
  
We define
\begin{equation}
  \label{eq:Y}
  \ym := \big(\frac 1\nu\cY, -\cY\big),\qquad \yp := \big(\frac 1\nu\cY, \cY\big),
\end{equation}
\begin{equation}
  \label{eq:a}
  \alpha^- := \frac{\nu}{2}J\cY^+ = \frac 12(\nu\cY, -\cY),\qquad \alpha^+ := -\frac{\nu}{2}J\cY^- =\frac 12(\nu\cY, \cY).
\end{equation}

We have $J\circ\vD^2 E(\bs W) = \begin{pmatrix} 0 & \Id \\ -L & 0\end{pmatrix}$. A short computation shows that
  \begin{equation}
    \label{eq:eigenvect}
    J\circ\vD^2 E(\bs W)\ym = -\nu \ym,\qquad J\circ\vD^2 E(\bs W)\yp = \nu \yp
  \end{equation}
  and
  \begin{equation}
    \label{eq:eigencovect}
    \la\alpha^-, J\circ\vD^2 E(\bs W)\bs g\ra = -\nu\la\alpha^-, \bs g\ra,\qquad \la\alpha^+, J\circ\vD^2 E(\bs W)\bs g\ra = \nu\la\alpha^+, \bs g\ra,\qquad \forall \bs g\in\cE.
  \end{equation}
  We will think of $\alpha^-$ and $\alpha^+$ as linear forms on $\cE$.
  Notice that $\la \alpha^-, \ym\ra = \la \alpha^+, \yp\ra = 1$ and $\la \alpha^-, \yp\ra = \la \alpha^+, \ym\ra = 0$.

  The rescaled versions of these objects are
  \begin{equation}
    \label{eq:Yl}
    \yml := \big(\frac 1\nu\cY_\lambda, -\cY_\uln\lambda\big),\qquad \ypl := \big(\frac 1\nu\cY_\lambda, \cY_\uln\lambda\big),
  \end{equation}
  \begin{equation}
    \label{eq:al}
    \alpha^-_\lambda := \frac{\nu}{2\lambda}J\cY_\lambda^+ = \frac 12\big(\frac{\nu}{\lambda}\cY_\uln\lambda, -\cY_\uln\lambda\big),\qquad \alpha^+_\lambda := -\frac{\nu}{2\lambda}J\cY_\lambda^- = \frac 12\big(\frac{\nu}{\lambda}\cY_\uln\lambda, \cY_\uln\lambda\big).
  \end{equation}
  The scaling is chosen so that $\la\alpha_\lambda^-, \ym_\lambda\ra = \la\alpha_\lambda^+, \yp_\lambda\ra = 1$.
  We have
  \begin{equation}
    \label{eq:eigenvectl}
    J\circ\vD^2 E(\bs W_\lambda)\ym_\lambda = -\frac{\nu}{\lambda} \ym_\lambda,\qquad J\circ\vD^2 E(\bs W_\lambda)\yp_\lambda = \frac{\nu}{\lambda} \yp_\lambda
  \end{equation}
  and
  \begin{equation}
    \label{eq:eigencovectl}
    \la\alpha_\lambda^-, J\circ\vD^2 E(\bs W_\lambda)\bs g\ra = -\frac{\nu}{\lambda}\la\alpha_\lambda^-, \bs g\ra,
    \qquad \la\alpha_\lambda^+, J\circ\vD^2 E(\bs W_\lambda)\bs g\ra = \frac{\nu}{\lambda}\la\alpha_\lambda^+, \bs g\ra,\qquad \forall \bs g\in\cE.
  \end{equation}

  As a standard consequence of \eqref{eq:coer-lin-coer-1}, we obtain the following:
\begin{lemma}
    \label{lem:coer}
    There exists a constant $\eta > 0$ such that if $\|\bs V - \bs W_\lambda\|_\cE < \eta$, then for all $\bs g \in \cE$ such that $\la \cZ_\uln\lambda, g\ra = 0$ there holds
    \begin{equation*}
      \frac 12 \la \vD^2 E(\bs V)\bs g, \bs g\ra +2\big(\la \alpha^-_\lambda, \bs g\ra^2 + \la \alpha^+_\lambda, \bs g\ra^2\big) \gtrsim \|\bs g\|_\cE^2.
    \end{equation*}
  \end{lemma}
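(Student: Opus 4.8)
The plan is to reduce the statement, in two steps, to the corresponding inequality for $\bs V=\bs W$ and $\lambda=1$, then to compute the $\alpha^\pm$-terms explicitly and conclude by splitting off the unique negative direction of $L$ using \eqref{eq:coer-lin-coer-1} and the auxiliary function $\cZ$. \emph{Reduction to $\lambda=1$.} First I would note that everything in the statement is invariant under the rescaling $\bs V\mapsto\bs V_{1/\lambda}$, $\bs g\mapsto\bs g_{1/\lambda}$: the $\cE$-norm and the hypothesis $\|\bs V-\bs W_\lambda\|_\cE<\eta$ are preserved, the constraint $\la\cZ_{\uln\lambda},g\ra=0$ turns into $\la\cZ,g_{1/\lambda}\ra=0$, the quadratic form $\la\vD^2 E(\bs V)\bs g,\bs g\ra$ is scale-invariant (as is $E$), and the pairings $\la\alpha^\pm_\lambda,\bs g\ra$ equal $\la\alpha^\pm,\bs g_{1/\lambda}\ra$ by the very normalization chosen in \eqref{eq:al}. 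So from now on $\lambda=1$ and I write $\bs g=(g_1,g_2)$ with $\la\cZ,g_1\ra=0$.

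\emph{Reduction to $\bs V=\bs W$.} Since $\vD^2 E(\bs V)$ depends only on the first component $V_1$ and $\vD^2 E(\bs W)\bs g=(Lg_1,g_2)$, one has
\begin{equation*}
  \la\vD^2 E(\bs V)\bs g,\bs g\ra=\la g_1,Lg_1\ra+\|g_2\|_{L^2}^2+\int_{\bR^N}\big(f'(W)-f'(V_1)\big)|g_1|^2\ud x.
\end{equation*}
I would bound the last integral by $\|f'(W)-f'(V_1)\|_{L^{N/2}}\,\|g_1\|_{L^{2N/(N-2)}}^2\lesssim\|f'(W)-f'(V_1)\|_{L^{N/2}}\,\|\bs g\|_\cE^2$ (Hölder and Sobolev), and check $\|f'(W)-f'(V_1)\|_{L^{N/2}}\to0$ as $\|V_1-W\|_{\dot H^1}\to0$ using $\big||a|^p-|b|^p\big|\lesssim|a-b|^p$ for $p=\frac 4{N-2}\le1$ (case $N\ge6$), and $\big||a|^p-|b|^p\big|\lesssim(|a|^{p-1}+|b|^{p-1})|a-b|$ together with $W,V_1\in L^{2N/(N-2)}$ for $p>1$ (case $N\in\{3,4,5\}$). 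Thus, after possibly shrinking $\eta$, it suffices to prove the estimate for $\bs V=\bs W$, up to an additive error $\delta(\eta)\|\bs g\|_\cE^2$ with $\delta(\eta)\to0$.

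\emph{The reduced inequality.} From \eqref{eq:a} one gets $\la\alpha^\mp,\bs g\ra=\frac{\nu}{2}\la\cY,g_1\ra\mp\frac12\la\cY,g_2\ra$, hence $2\big(\la\alpha^-,\bs g\ra^2+\la\alpha^+,\bs g\ra^2\big)=\nu^2\la\cY,g_1\ra^2+\la\cY,g_2\ra^2$. The $g_2$-part of the claim is then trivial, since $\frac12\|g_2\|_{L^2}^2+\la\cY,g_2\ra^2\ge\frac12\|g_2\|_{L^2}^2$. For the $g_1$-part I would decompose $g_1=m\cY+g_1'$ with $m:=\la\cY,g_1\ra$ and $\la\cY,g_1'\ra=0$; because $\la\cZ,\cY\ra=0$ by \eqref{eq:Z}, also $\la\cZ,g_1'\ra=0$. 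Using $L\cY=-\nu^2\cY$, $\|\cY\|_{L^2}=1$ and self-adjointness of $L$, all cross terms vanish and $\la g_1,Lg_1\ra=-\nu^2m^2+\la g_1',Lg_1'\ra$, while \eqref{eq:coer-lin-coer-1} applied to the radial function $g_1'$ (orthogonal to both $\cZ$ and $\cY$) gives $\la g_1',Lg_1'\ra\ge c\|g_1'\|_{\dot H^1}^2$. Therefore
\begin{equation*}
  \frac12\la g_1,Lg_1\ra+\nu^2m^2=\frac12\la g_1',Lg_1'\ra+\frac{\nu^2}{2}m^2\ge\frac c2\|g_1'\|_{\dot H^1}^2+\frac{\nu^2}{2}m^2\gtrsim\|g_1\|_{\dot H^1}^2,
\end{equation*}
the last step using $\|g_1\|_{\dot H^1}^2\lesssim m^2\|\cY\|_{\dot H^1}^2+\|g_1'\|_{\dot H^1}^2$. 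Adding the $g_1$- and $g_2$-contributions and absorbing the error $\delta(\eta)\|\bs g\|_\cE^2$ (for $\eta$ small) finishes the proof.

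The only genuine, if routine, computation is the perturbative step, in particular the case distinction $N\ge6$ versus $N\in\{3,4,5\}$ in controlling $\|f'(W)-f'(V_1)\|_{L^{N/2}}$; everything else is the linear algebra of the $(\alpha^-,\alpha^+)$ pairings together with the spectral splitting of $L$ off its single negative direction, which is exactly what $\cZ$ (chosen so that $\la\cZ,\cY\ra=0$ and $\la\cZ,\Lambda W\ra>0$) was introduced to enable through \eqref{eq:coer-lin-coer-1}.
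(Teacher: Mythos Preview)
Your proof is correct and follows the same approach as the paper's: reduce to $\bs V=\bs W_\lambda$ by controlling $\|f'(V_1)-f'(W_\lambda)\|_{L^{N/2}}$ (with the same case distinction $N\ge 6$ versus $N\in\{3,4,5\}$ that the paper singles out), then appeal to the linear coercivity \eqref{eq:coer-lin-coer-1}. The paper simply cites \cite[Lemma~2.2]{moi15p-2} for $N\in\{3,4,5\}$ and records the inequality $\|f'(V)-f'(W_\lambda)\|_{L^{N/2}}\le f'(\|V-W_\lambda\|_{\dot H^1})$ for $N\ge 6$, whereas you have written out the full argument, including the explicit computation $2(\la\alpha^-,\bs g\ra^2+\la\alpha^+,\bs g\ra^2)=\nu^2\la\cY,g_1\ra^2+\la\cY,g_2\ra^2$ and the spectral splitting of $g_1$ along $\cY$; this is a welcome self-contained version of the same proof.
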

  \begin{proof}
    For $N \in \{3, 4, 5\}$ see \cite[Lemma 2.2]{moi15p-2}. For $N \geq 6$ the same proof is valid, once we notice that $\|f'(V) - f'(W_\lambda)\|_{L^\frac N2} \leq f'(\|V - W_\lambda\|_{\dot H^1})$.
  \end{proof}
 
We now turn to the proofs of various energy estimates for the linear group generated by
  $$
  A := J\circ\vD^2 E(\bs W) = \begin{pmatrix} 0 & \Id \\ -L & 0\end{pmatrix}.
    $$
on its invariant subspaces,
which will be needed in Subsection~\ref{ssec:coer-constr}. This is much in the spirit of \cite[Section 2]{BoWa97}.

It follows from \eqref{eq:eigencovect} that the \emph{centre-stable subspace} $\cX_\tx{cs} := \ker \alpha^+$,
the \emph{centre-unstable subspace} $\cX_\tx{cu} := \ker \alpha^-$ and the \emph{centre subspace}
$\cX_\tx{c} := \cX_\tx{cs} \cap \cX_\tx{cu}$ are invariant subspaces of the operator $A$.
Notice that $\la\alpha^-, \ym\ra = \la\alpha^+, \yp\ra = 1$, $\cE = \cX_\tx{cs} \oplus \{a\yp\} = \cX_\tx{cu} \oplus \{a\ym\}$,
$\cX_\tx{cs} = \cX_\tx{c} \oplus \{a\ym\}$, $\cX_\tx{cu} = \cX_\tx{c} \oplus \{a\yp\}$.

We define $\cX_\tx{cc} := \{\bs v = (v, \dot v) \in \cX_\tx{c}\ |\ \la \cZ, v\ra = 0\}$.
\begin{lemma}
  \label{lem:normA}
  Let $k \in \bN$. There exists constants $1 = a_0 > a_1 > \ldots > a_k > 0$ such that the norm $\|\cdot\|_{A, k}$ defined by the following formula:
  \begin{equation}
    \label{eq:normA}
    \|\bs v\|_{A, k}^2 := \sum_{j=0}^k a_j \big(\la v, L^{j+1}v\ra + \la \dot v, L^j\dot v\ra\big)
  \end{equation}
  satisfies $\|\bs v\|_{X^k \times H^k} \sim \|\bs v\|_{A, k}$ for all $\bs v = (v, \dot v) \in (X^k\times H^k) \cap \cX_\tx{cc}$.
\end{lemma}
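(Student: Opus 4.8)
The plan is to treat the two coordinates of $\bs v=(v,\dot v)$ separately and establish the two equivalences
\[ \sum_{j=0}^k a_j\la v,L^{j+1}v\ra\ \sim\ \|v\|_{\dot H^1}^2+\|v\|_{\dot H^{k+1}}^2,\qquad \sum_{j=0}^k a_j\la\dot v,L^{j}\dot v\ra\ \sim\ \|\dot v\|_{L^2}^2+\|\dot v\|_{\dot H^{k}}^2, \]
whose sum is the assertion, since $\|v\|_{X^k}^2\sim\|v\|_{\dot H^1}^2+\|v\|_{\dot H^{k+1}}^2$ and $\|\dot v\|_{H^k}^2\sim\|\dot v\|_{L^2}^2+\|\dot v\|_{\dot H^{k}}^2$, all intermediate norms being controlled by interpolation. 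The structural input is that $L=-\Delta+V$ with $V:=-f'(W)$ smooth, bounded together with all its derivatives and decaying, $|\partial^\alpha V(x)|\lesssim\la x\ra^{-4-|\alpha|}$; consequently, for every integer $m\ge1$, the operator $L^m-(-\Delta)^m$ is a differential operator of order at most $2m-2$ whose coefficients are finite sums of products of derivatives of $V$, hence (together with all their derivatives) bounded and decaying like $\la x\ra^{-4}$ or faster, and in particular they lie in $L^{N/2}(\bR^N)$.

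For the first equivalence and $j\ge1$ I write $\la v,L^{j+1}v\ra=\|v\|_{\dot H^{j+1}}^2+\la v,(L^{j+1}-(-\Delta)^{j+1})v\ra$ and estimate the error: since the operator has order $\le 2j$, integrating by parts to balance the derivatives on the two factors turns the error into a finite sum of terms $\la\grad^a v,c(x)\grad^b v\ra$ with $a+b\le 2j$ (so $a,b\le j$) and $c$ bounded and decaying. The genuinely order-$0$ and order-$1$ contributions are $\lesssim\|v\|_{\dot H^1}^2$ by the Sobolev embedding $\dot H^1\hookrightarrow L^{2N/(N-2)}$ and the $L^{N/2}$-integrability of $c$ (resp. of $\div c$); the remaining contributions are $\lesssim\|v\|_{\dot H^1}^2+\sum_{2\le i\le j}\|v\|_{\dot H^i}^2$, and the interpolation inequality $\|v\|_{\dot H^i}^2\le\varepsilon\|v\|_{\dot H^{j+1}}^2+C_\varepsilon\|v\|_{\dot H^1}^2$ for $2\le i\le j$, with $\varepsilon$ small depending on $j$, yields
\[ \la v,L^{j+1}v\ra\ \ge\ \tfrac12\|v\|_{\dot H^{j+1}}^2-C_j\|v\|_{\dot H^1}^2,\qquad j\ge1, \]
along with the matching upper bound $\la v,L^{j+1}v\ra\lesssim\|v\|_{\dot H^1}^2+\|v\|_{\dot H^{j+1}}^2$ from the same termwise estimates. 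The level $j=0$ is exactly where the orthogonality enters: on $\cX_\tx{cc}$ one has $\la\cY,v\ra=0$ (because $\bs v\in\cX_\tx{c}$) and $\la\cZ,v\ra=0$, so \eqref{eq:coer-lin-coer-1} gives $\la v,Lv\ra\ge c\|v\|_{\dot H^1}^2$ directly.

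The estimates for $\dot v$ are the same but simpler, since now $\dot v\in L^2$: for $j\ge1$ the error operator $L^j-(-\Delta)^j$ has order $\le 2j-2$, so $\la\dot v,L^j\dot v\ra=\|\dot v\|_{\dot H^j}^2+O\big(\|\dot v\|_{L^2}^2+\sum_{1\le i\le j-1}\|\dot v\|_{\dot H^i}^2\big)$, which after interpolation between $L^2$ and $\dot H^j$ gives $\la\dot v,L^j\dot v\ra\ge\tfrac12\|\dot v\|_{\dot H^j}^2-C_j\|\dot v\|_{L^2}^2$ and a matching upper bound, while $\la\dot v,\dot v\ra=\|\dot v\|_{L^2}^2$ at $j=0$; no orthogonality is needed here. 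Finally I set $a_0:=1$ and $a_j:=\delta^j$ for $1\le j\le k$ — so that $1=a_0>a_1>\dots>a_k>0$ automatically — and sum: for the $v$-part,
\[ \sum_{j=0}^k a_j\la v,L^{j+1}v\ra\ \ge\ \Big(c-\sum_{j=1}^k\delta^jC_j\Big)\|v\|_{\dot H^1}^2+\tfrac12\sum_{j=1}^k\delta^j\|v\|_{\dot H^{j+1}}^2\ \gtrsim\ \|v\|_{\dot H^1}^2+\|v\|_{\dot H^{k+1}}^2 \]
as soon as $\delta$ is small enough that $\sum_{j=1}^k\delta^jC_j\le c/2$; the same choice of $\delta$ makes $\sum_{j}a_j\la\dot v,L^j\dot v\ra\gtrsim\|\dot v\|_{L^2}^2+\|\dot v\|_{\dot H^k}^2$. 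Combined with the upper bounds (and $a_j\le1$), this gives $\|\bs v\|_{A,k}\sim\|\bs v\|_{X^k\times H^k}$ on $(X^k\times H^k)\cap\cX_\tx{cc}$.

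The only non-elementary ingredient is the coercivity estimate \eqref{eq:coer-lin-coer-1}, which handles the base level $j=0$ and is the sole place where the orthogonality conditions defining $\cX_\tx{cc}$ are used; everything else is bookkeeping. I expect the main obstacle to be organizing the error $\la v,(L^{j+1}-(-\Delta)^{j+1})v\ra$ so that, after integration by parts, every term carries at most $j$ derivatives on each factor with a bounded coefficient (and an $L^{N/2}$ coefficient in the genuinely low-order cases), so that it is dominated by $\|v\|_{\dot H^1}^2$ plus intermediate Sobolev norms which interpolation then absorbs into $\varepsilon\|v\|_{\dot H^{j+1}}^2$ — and, with $L^2$ in place of $\dot H^1$, for $\dot v$.
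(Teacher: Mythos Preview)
Your proof is correct and follows essentially the same strategy as the paper: the base level $j=0$ is handled by the coercivity estimate \eqref{eq:coer-lin-coer-1} (this is where the orthogonalities defining $\cX_\tx{cc}$ enter), while for higher $j$ one uses that $L^{m}-(-\Delta)^{m}$ has order at most $2m-2$, integrates by parts to balance derivatives, and absorbs the resulting intermediate Sobolev norms; the low-order terms $\int c|v|^2$ and $\int c\,v\,\partial v$ are controlled via $c\in L^{N/2}$ and Sobolev, which is equivalent to the paper's appeal to the Hardy inequality. The only organizational difference is that the paper packages the argument as an induction on $k$ (showing $\|v\|_{X^{k-1}}^2+a\la v,L^{k+1}v\ra\gtrsim\|v\|_{X^k}^2$), whereas you prove the level-wise bounds $\la v,L^{j+1}v\ra\ge\tfrac12\|v\|_{\dot H^{j+1}}^2-C_j\|v\|_{\dot H^1}^2$ first and then sum with geometric weights $a_j=\delta^j$; the content is the same.
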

\begin{proof}
  We proceed by induction. For $k = 0$ we have
  $$
  \|\bs v\|_{A, 0} = \sqrt{\la v, L v\ra + \la \dot v, \dot v\ra} = \sqrt{\la \vD^2 E(\bs W)\bs v, \bs v\ra}.
  $$
  By Lemma~\ref{lem:coer}, this norm is equivalent to $\|\cdot\|_\cE$ on $\cE \cap \cX_\tx{cc}$.

  To check the induction step, one should show that for any $k > 0$ there exists $a_1, a_2 > 0$ such that
  \begin{equation}
    \label{eq:equiv-n-1}
    \| v\|_1^2 := \|v\|_{X^{k-1}}^2 + a_1\la v, L^{k+1}v\ra \gtrsim \|v\|_{X^k}^2
  \end{equation}
  and
  \begin{equation}
    \label{eq:equiv-n-2}
    \| \dot v\|_2^2 := \|\dot v\|_{H^{k-1}}^2 + a_2\la \dot v, L^k\dot v\ra \gtrsim \|\dot v\|_{H^k}^2.
  \end{equation}
  To prove \eqref{eq:equiv-n-2} notice that
  \begin{equation*}
    L^k = (-\Delta)^k + (\text{terms with at most $2k-2$ derivatives}).
  \end{equation*}
  Integrating by parts all the terms except for the first one we arrive at expressions of the form $\int V\cdot\partial^i v\cdot\partial^j v\ud x$
  where $V$ is bounded and $i, j \leq k-1$. All these expressions are controlled by $\|\cdot \|_{H^{k-1}}^2$.

  The proof of \eqref{eq:equiv-n-1} is almost the same. The only problem are the terms of the form $\int V\cdot \grad v\cdot v\ud x$
  and $\int V\cdot |v|^2\ud x$. As the potential decreases at least as $f'(W)$,
  by Hardy inequality these terms are controlled by $\|v\|_{\dot H^1}$.
\end{proof}
We will denote $\la \cdot, \cdot \ra_{A,k}$ the scalar product associated with the norm $\|\cdot\|_{A,k}$.

We define the projections:
\begin{equation*}
  \pi_s \bs v := \la\alpha^-, \bs v\ra\cY^-,\qquad \pi_{cu} := \Id - \pi_s.
\end{equation*}
We denote $\pi_{cc}$ the projection of $\cX_\tx{c}$ on $\cX_\tx{cc}$ in the direction $\Lambda_\cE \bs W$.
These projections are continuous linear operators on $\cE$ as well as on $X^k \times H^k$ for $k > 0$.
\begin{proposition}
  \label{prop:group}
  The operator $A$ generates a strongly continuous group on $X^k \times H^k$ denoted $\eee^{tA}$. Moreover, the following bounds are true:
  \begin{align}
    \bs v_0 \in (X^k\times H^k)\cap\cX_\tx{s} \quad\Rightarrow\quad &\|\eee^{tA}\bs v_0\|_{X^k \times H^k} \lesssim \eee^{-\nu t}\|\bs v_0\|_{X^k \times H^k} &\text{ for }t \geq 0, \label{eq:group-s} \\
    \bs v_0 \in (X^k\times H^k)\cap\cX_\tx{cu} \quad\Rightarrow\quad &\|\eee^{-tA}\bs v_0\|_{X^k \times H^k} \lesssim (1+t)\|\bs v_0\|_{X^k\times H^k} &\text{ for }t \geq 0,\label{eq:group-cu} \\
    \bs v_0 \in X^k\times H^k \quad\Rightarrow\quad &\|\eee^{-tA}\bs v_0\|_{X^k \times H^k} \lesssim \eee^{\nu t}\|\bs v_0\|_{X^k\times H^k} &\text{ for }t \geq 0.\label{eq:group-all}
  \end{align}
\end{proposition}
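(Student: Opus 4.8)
The plan is to obtain the group from standard energy estimates and then to read off the three bounds from the $A$‑invariant splitting into stable, centre and unstable directions, the only delicate point being the centre dynamics.

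\smallskip
\noindent\emph{Existence of the group.} On $\cE$, $A$ is the generator of the linearized \eqref{eq:nlw}-flow, a strongly continuous group. To propagate higher regularity I would commute $\grad^m$ ($1\le m\le k$) through $\partial_{tt}v = -Lv$: since $f'(W)$ is smooth and decays, with all derivatives, at least like $|x|^{-4}$, the commutators produce only terms of order $\le k-1$ with bounded coefficients (the $\grad v$ and $v$ contributions being absorbed by Hardy's inequality, exactly as in the proof of Lemma~\ref{lem:normA}); Gr\"onwall then gives $\|\eee^{tA}\bs v_0\|_{X^k\times H^k}\lesssim \eee^{C_k|t|}\|\bs v_0\|_{X^k\times H^k}$, and a density argument upgrades this to the $C_0$-group statement. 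The projections $\pi_s$, $\pi_{cu}$, $\pi_{cc}$ and $\bs v\mapsto\la\alpha^+,\bs v\ra\yp$ being bounded on $X^k\times H^k$, identities \eqref{eq:eigenvect}--\eqref{eq:eigencovect} give an $A$-invariant splitting $X^k\times H^k = \cX_\tx{s}\oplus\cX_\tx{c}\oplus\cX_\tx{u}$ with $\cX_\tx{s} := \bR\ym$, $\cX_\tx{u} := \bR\yp$, on which $\eee^{tA}$ acts by $\eee^{-\nu t}$ and $\eee^{\nu t}$ respectively; in particular \eqref{eq:group-s} is immediate, and \eqref{eq:group-cu}, \eqref{eq:group-all} will follow once I show
\begin{equation*}
  \|\eee^{tA}\bs v_0\|_{X^k\times H^k}\lesssim (1+|t|)\,\|\bs v_0\|_{X^k\times H^k},\qquad \bs v_0\in\cX_\tx{c}\cap(X^k\times H^k).
\end{equation*}

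\smallskip
\noindent\emph{The conserved quantity on the centre space.} Writing $\bs v(t)=(v(t),\dot v(t)):=\eee^{tA}\bs v_0$, so $\partial_t v=\dot v$ and $\partial_t\dot v=-Lv$, self-adjointness of $L$ gives
\begin{equation*}
  \dd t\|\bs v(t)\|_{A,k}^2 = 2\sum_{j=0}^{k}a_j\big(\la\dot v, L^{j+1}v\ra - \la Lv, L^{j}\dot v\ra\big) = 0,
\end{equation*}
that is, $\|\cdot\|_{A,k}$ is conserved. On $\cX_\tx{c}=\{(v,\dot v):\la\cY,v\ra=\la\cY,\dot v\ra=0\}$ the form $\|\cdot\|_{A,k}^2$ is nonnegative because $L\geq 0$ on $\{\cY\}^\perp$, it equals $\|\cdot\|_{X^k\times H^k}^2$ up to constants on $\cX_\tx{cc}$ by Lemma~\ref{lem:normA}, and $\Lambda_\cE\bs W=(\Lambda W,0)$ lies in its null space since $L\Lambda W=0$. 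By Cauchy--Schwarz for the associated semi-inner product, adding a multiple of $\Lambda_\cE\bs W$ does not change $\|\cdot\|_{A,k}$; hence, decomposing $\bs v(t)=\bs w(t)+b(t)\,\Lambda_\cE\bs W$ with $\bs w(t):=\pi_{cc}\bs v(t)\in\cX_\tx{cc}$ and $b(t)\in\bR$, I obtain
\begin{equation*}
  \|\bs w(t)\|_{X^k\times H^k}\sim\|\bs w(t)\|_{A,k}=\|\bs v(t)\|_{A,k}=\|\bs v_0\|_{A,k}\lesssim\|\bs v_0\|_{X^k\times H^k}
\end{equation*}
uniformly in $t\in\bR$.

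\smallskip
\noindent\emph{The drift along the Jordan mode --- the main obstacle.} What is left is to bound $b(t)$, and this is the only genuine difficulty: the eigenvalue $0$ of $A$ on $\cX_\tx{c}$ carries a size-two Jordan block, spanned by $\Lambda_\cE\bs W$ and $(0,\Lambda W)$, so $\cX_\tx{cc}$ is not invariant and $b(t)$ drifts --- this is precisely what produces the linear-in-$t$ loss. Since $A\Lambda_\cE\bs W=0$, one has $\partial_t\bs v(t)=A\bs w(t)=(\dot w(t),-Lw(t))$ for $\bs w(t)=(w(t),\dot w(t))$, so $\dot v(t)=\dot w(t)$. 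Pairing first components with $\cZ$ and using $\la\cZ,\Lambda W\ra>0$ and $\la\cZ,w\ra=0$ on $\cX_\tx{cc}$, one gets $b(t)\la\cZ,\Lambda W\ra=\la\cZ,v(t)\ra$, hence $b'(t)\la\cZ,\Lambda W\ra=\la\cZ,\dot v(t)\ra=\la\cZ,\dot w(t)\ra$. Therefore $|b'(t)|\lesssim\|\dot w(t)\|_{L^2}\lesssim\|\bs w(t)\|_{X^k\times H^k}\lesssim\|\bs v_0\|_{X^k\times H^k}$, and since $|b(0)|\lesssim\|\bs v_0\|_\cE$ (by Sobolev, as $\cZ\in C_0^\infty$), integrating gives $|b(t)|\lesssim(1+|t|)\|\bs v_0\|_{X^k\times H^k}$. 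Combining, $\|\eee^{tA}\bs v_0\|_{X^k\times H^k}\leq\|\bs w(t)\|_{X^k\times H^k}+|b(t)|\,\|\Lambda_\cE\bs W\|_{X^k\times H^k}\lesssim(1+|t|)\|\bs v_0\|_{X^k\times H^k}$ on $\cX_\tx{c}$; together with the explicit action on $\cX_\tx{s}$ and $\cX_\tx{u}$ and the boundedness of the three projections, this yields \eqref{eq:group-s}, \eqref{eq:group-cu} and \eqref{eq:group-all}.
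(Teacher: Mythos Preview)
Your proof is correct and follows essentially the same route as the paper: both exploit conservation of $\|\cdot\|_{A,k}$ on the centre space (you verify it directly, the paper phrases it as skew-adjointness of $\pi_{cc}\circ A$ and invokes Stone), and both control the Jordan drift along $\Lambda_\cE\bs W$ via the identical formula $b'(t)\la\cZ,\Lambda W\ra=\la\cZ,\dot w(t)\ra$. The only cosmetic difference is that you first obtain the group by a Gr\"onwall argument and then sharpen the bound, whereas the paper constructs $\eee^{tA}$ on $\cX_\tx{c}$ directly from the unitary group on $\cX_\tx{cc}$.
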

\begin{proof}
  It suffices to analyse the restriction to the invariant subspace $\cX_\tx{c}$.
  Take $\bs v_0 \in \cX_\tx{c}$ and decompose $\bs v_0 = l_0 \Lambda_\cE \bs W + \bs w_0$, $\bs w_0 \in \cX_\tx{cc}$
  (notice that $\Lambda_\cE \bs W \in X^k \times H^k$).
  It can be checked that the operator $B := \pi_{cc}\circ A$ is skew-adjoint for the scalar product $\la\cdot, \cdot\ra_{A, k}$,
  hence it generates a unitary group $\bs w(t) = \eee^{tB}\bs w_0$ by the Stone theorem.
  Let $l(t)$ be defined by the formula
  \begin{equation}
    \label{eq:rown-l}
    l(t) = l_0 + \int_0^t\frac{\la \cZ, \dot w(t)\ra}{\la \cZ, \Lambda W\ra}\ud t.
  \end{equation}
  Set $\bs v(t) = \bs w(t) + l(t)\Lambda_\cE \bs W$. This defines a linear group and
  \begin{equation*}
    \lim_{t\to 0}\frac{1}{t}(\bs v(t) - \bs v_0) = B\bs w_0 + l'(0)\Lambda_\cE \bs W = B\bs v_0 + \frac{\la \cZ, \dot w_0\ra}{\la \cZ, \Lambda W\ra}\Lambda_\cE \bs W = A\bs v_0,
  \end{equation*}
  hence $\bs v(t) = \eee^{tA}\bs v_0$.

  Estimate \eqref{eq:group-s} follows immediately from the fact that $\cY^-$ is an eigenfunction of $A$ with eigenvalue $-\nu$.
  Analogously, in \eqref{eq:group-cu} we can assume that $\bs v_0 \in \cX_\tx{c}$ (the unstable mode decreases exponentially for negative times).
  By the equivalence of norms and the fact that the group generated by $B$ is unitary for the norm $\|\cdot\|_{A,k}$,
  \begin{equation}
    \label{eq:bound-w}
    \|\bs w(t)\|_{X^k\times H^k} \lesssim \|\bs v_0\|_{X^k\times H^k}\quad\text{for all }t,
  \end{equation}
    hence it suffices to bound $l(t)$.
    Using \eqref{eq:bound-w} and the fact that $|l_0| \lesssim \|\bs v_0\|_{X^k \times H^k}$ we get from \eqref{eq:rown-l} that $|l(t)| \lesssim (1+|t|)\|\bs v_0\|_{X^k\times H^k}$.

    Estimate \eqref{eq:group-all} follows easily from \eqref{eq:group-cu}.
\end{proof}

\begin{remark}
  \label{rem:group}
  The factor $|t|$ in \eqref{eq:group-cu} is necessary, for example in dimension $N = 5$ we have a solution $\bs v(t) = (t\Lambda W, \Lambda W)$.
\end{remark}

It is possible to finish the construction for example in the space $X^1 \times H^1$. However, later we will need some information on the spatial decay of the constructed functions,
which forces us to use weighted spaces. We define
\begin{equation*}
  \|v\|_{Y^k} := \|(1+|x|^k)v\|_{H^k}.
\end{equation*}
One may check by induction on $j = 0, 1, \ldots, k$ that
\begin{equation*}
  \|(1+|x|^k)v\|_{H^j}^2 \sim \int (1+|x|^{2k})(|v|^2+|\grad^j v|^2)\ud x,
\end{equation*}
in particular
\begin{equation*}
  \|v\|_{Y^k}^2 \sim \int (1+|x|^{2k})(|v|^2+|\grad^k v|^2)\ud x.
\end{equation*}

\begin{lemma}
  \label{lem:group}
  Let $k\in \bZ, k\geq 0$. The following bounds are true for $t \geq 0$:
  \begin{align}
    \bs v_0 \in (Y^{k+1}\times Y^k)\cap\cX_\tx{s} \quad\Rightarrow\quad &\|\eee^{tA}\bs v_0\|_{Y^{k+1}\times Y^k} \lesssim \eee^{-\nu t}\|\bs v_0\|_{Y^{k+1}\times Y^k}, \label{eq:group-s-Y} \\
    \bs v_0 \in (Y^{k+1}\times Y^k)\cap\cX_\tx{cu} \quad\Rightarrow\quad &\|\eee^{-tA}\bs v_0\|_{Y^{k+1}\times Y^k}\lesssim (1+t^{\frac{k(k+1)}{2} + 1})\|\bs v_0\|_{Y^{k+1}\times Y^k},\label{eq:group-cu-Y} \\
    \bs v_0 \in Y^{k+1}\times Y^k \quad\Rightarrow\quad &\|\eee^{-tA}\bs v_0\|_{Y^{k+1}\times Y^k} \lesssim \eee^{\nu t}\|\bs v_0\|_{Y^{k+1}\times Y^k}.\label{eq:group-all-Y}
  \end{align}
\end{lemma}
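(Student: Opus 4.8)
\emph{Proof sketch.}
The plan is to mimic the proof of Proposition~\ref{prop:group}, the only genuinely new ingredient being the control of how the polynomial weights $|x|^k$ grow along the flow. As there, it suffices to analyse the restriction of $\eee^{tA}$ to the centre subspace $\cX_\tx{c}$: the component of $\bs v_0$ along the one–dimensional stable (resp.\ unstable) eigenspace, spanned by $\cY^-$ (resp.\ $\cY^+$), is smooth and exponentially decaying, hence lies in every $Y^k$ and satisfies \eqref{eq:group-s-Y} and \eqref{eq:group-all-Y} immediately; moreover \eqref{eq:group-all-Y} for general data reduces to \eqref{eq:group-cu-Y} together with this exponential bound, exactly as in Proposition~\ref{prop:group}. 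So fix $\bs v_0 \in \cX_\tx{c}$ and set $\bs v(t) = (v(t), \dot v(t)) = \eee^{tA}\bs v_0$, which solves the abstract wave equation $\ddot v = -Lv = \Delta v + f'(W)v$; on $\cX_\tx{c}$ all the estimates below are in $|t|$.

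I would argue by induction on $k$. Assuming \eqref{eq:group-cu-Y} at level $k-1$, introduce a hierarchy of weighted energy functionals $I_j(t)$, $j = 0, \dots, k$, measuring $|x|^j$–weighted $\dot H^{j+1}\times\dot H^j$ norms of $\bs v(t)$ together with the lower–order $L^2$–type pieces, so that $I_k(t)^{1/2} \sim \|\bs v(t)\|_{Y^{k+1}\times Y^k}$ modulo the $\Lambda_\cE\bs W$–direction, whose contribution grows at most linearly in $t$ by the explicit formula \eqref{eq:rown-l} and the bounds of Proposition~\ref{prop:group} — this is the origin of the ``$+1$'' in the exponent. Differentiating $I_j$ in time and integrating by parts, the top–order terms cancel by self-adjointness of $L$ (the weighted analogue of the conservation of $\la \vD^2 E(\bs W)\bs v, \bs v\ra$ exploited in Proposition~\ref{prop:group}), and what remains are error terms in which either one power of the weight $|x|$ or one derivative has been transferred onto a lower–order factor, plus terms carrying the rapidly decaying coefficient $f'(W)$ and its derivatives, which are absorbed by Hardy's inequality. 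This yields a differential inequality of the schematic form $\dd t I_j \lesssim I_j^{1/2}\big(\sum_{i<j}I_i^{1/2} + 1\big)$; feeding in the inductive polynomial bounds on the lower $I_i$ and integrating gives $I_k(t)^{1/2} \lesssim (1+t)^{g(k)}\|\bs v_0\|_{Y^{k+1}\times Y^k}$, and carrying the bookkeeping through all $k$ weight levels — which are coupled to the derivative count, so that each level contributes its own power of $t$ — produces $g(k) = \tfrac{k(k+1)}2 + 1$, i.e.\ \eqref{eq:group-cu-Y}. The case $k=0$ is the same estimate with an empty sum, giving the linear bound $(1+t)$. Finally \eqref{eq:group-s-Y} and \eqref{eq:group-all-Y} follow by combining \eqref{eq:group-cu-Y} with the exponential bounds on the scalar stable and unstable modes, exactly as in Proposition~\ref{prop:group}.

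The main obstacle I expect is precisely this weighted energy estimate: one has to set up the hierarchy $I_j$ with weights and derivative orders matched to the anisotropic spaces $Y^{k+1}\times Y^k$ (the two components carry different weights and different numbers of derivatives), verify that all top–order contributions genuinely cancel, and track the error terms carefully enough to land on the stated polynomial rate rather than an exponential one — here it is essential that the potential $f'(W)$ and all its derivatives decay at least like $|x|^{-4}$, which is what makes the lower–order remainders harmless via Hardy. Everything else is a routine transcription of the unweighted argument.
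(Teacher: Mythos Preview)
Your overall strategy --- weighted energy identities in which the top-order contributions cancel after integration by parts, leaving commutator terms with one fewer power of the weight --- is exactly what the paper does. But two points of your sketch diverge from the paper's execution and deserve comment.

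First, the paper does \emph{not} run an induction on $k$ with a hierarchy $I_0,\dots,I_k$ coupling weight level $j$ to derivative level $j$. Instead it works at a single derivative level $k$ throughout: it differentiates
\[
y(t):=\int(1+|x|^{2k})\bigl(|\dot v|^2+|\nabla^k\dot v|^2+|\nabla v|^2+|\nabla^{k+1}v|^2\bigr)\ud x,
\]
obtains $|y'|\lesssim \int(1+|x|^{2k-1})(\ldots)\ud x$, and then closes by a \emph{single} H\"older interpolation between weight $2k$ and weight $0$:
\[
\int(1+|x|^{2k-1})F\ud x\ \lesssim\ y^{\frac{2k-1}{2k}}\Bigl(\int F\ud x\Bigr)^{\frac{1}{2k}}\ \lesssim\ y^{\frac{2k-1}{2k}}\,\|\bs v\|_{X^k\times H^k}^{1/k},
\]
where the unweighted factor is controlled by Proposition~\ref{prop:group}. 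This gives an integrable scalar ODE for $y$. Your proposed hierarchy, with $I_j$ carrying weight $|x|^{2j}$ \emph{and} derivative order $j$, does not close cleanly: the commutator at level $j$ produces a term like $\int|x|^{2j-1}|\nabla^{j+1}v|\,|\nabla^j\dot v|\ud x$, whose second factor (weight $2(j-1)$ on $\nabla^j\dot v$) is \emph{not} $I_{j-1}$. You would need to decouple weight and derivative indices to make the induction work, which in the end reduces to the paper's interpolation.

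Second, the ``$+1$'' in the exponent is not due to the $\Lambda_\cE\bs W$ direction (indeed $\Lambda W\notin Y^{k+1}$ in low dimensions, so that decomposition is unavailable here). In the paper the extra power of $t$ comes from the weighted $L^2$ norm of $v$: once $\int(1+|x|^{2k})|\dot v|^2\ud x$ is bounded polynomially, the Cauchy--Schwarz inequality
\[
\Bigl|\dd t\int(1+|x|^{2k})|v|^2\ud x\Bigr|\ \lesssim\ \Bigl(\int(1+|x|^{2k})|v|^2\Bigr)^{1/2}\Bigl(\int(1+|x|^{2k})|\dot v|^2\Bigr)^{1/2}
\]
is integrated once more, which costs one additional power of $t$.
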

\begin{proof}
  The proof of \eqref{eq:group-s-Y} and \eqref{eq:group-all-Y} is the same as in Proposition~\ref{prop:group}, once we recall that $\cY^- \in Y^{k+1}\times Y^k$.
  In order to prove \eqref{eq:group-cu-Y}, write $\bs v(t) = \eee^{-tA}\bs v_0$, so that $\partial_t \bs v = -A\bs v = (-\dot v, -\Delta v - f'(W)v)$, hence
  \begin{equation*}
    \begin{aligned}
      \frac 12 \dd t\int(1+|x|^{2k})(|\dot v|^2+|\grad v|^2)\ud x &= -\int(1+|x|^{2k})\big((\Delta v + f'(W)v)\cdot\dot v+ \grad\dot v\cdot\grad v\big) \ud x \\
      &=\int\grad(|x|^{2k})\cdot\grad v\cdot\dot v + (1+|x|^{2k})(f'(W)v)\cdot\dot v\ud x
    \end{aligned}
  \end{equation*}
  (we have integrated by parts between the first and the second line).
  Notice that $xf'(W) \in L^N$, hence by H\"older and Sobolev $\|xf'(W)v\|_{L^2} \lesssim \|v\|_{L^\frac{2N}{N-2}} \lesssim \|\grad v\|_{L^2}$, thus
  \begin{equation}
    \label{eq:energie-poids-0}
    \Big|\dd t\int(1+|x|^{2k})(|\dot v|^2+|\grad v|^2)\ud x \Big| \lesssim \int(1+|x|^{2k-1})(|\dot v|^2+|\grad v|^2)\ud x.
  \end{equation}
  
  Analogously, from
  \begin{equation*}
    \begin{aligned}
      &\frac 12 \dd t\int(1+|x|^{2k})(|\grad^k \dot v|^2+|\grad^{k+1} v|^2)\ud x \\= &-\int(1+|x|^{2k})\big(\grad^k(\Delta v + f'(W)v)\cdot\grad^{k}\dot v+ \grad^{k+1}\dot v\cdot\grad^{k+1} v\big) \ud x \\
      = &\int\grad(|x|^{2k})\cdot\grad^{k+1}v\cdot\grad^k\dot v + (1+|x|^{2k})\grad^{k}(f'(W)v)\cdot\grad^{k}\dot v\ud x
    \end{aligned}
  \end{equation*}
  we deduce
  \begin{equation}
    \label{eq:energie-poids-k}
    \Big|\dd t\int(1+|x|^{2k})(|\grad^k \dot v|^2+|\grad^{k+1} v|^2)\ud x \Big| \lesssim \int(1+|x|^{2k-1})(|\dot v|^2 + |\grad^k \dot v|^2+|\grad v|^2 + |\grad^{k+1} v|^2)\ud x.
  \end{equation}
  
  Using \eqref{eq:energie-poids-0}, \eqref{eq:energie-poids-k} and H\"older we obtain
  \begin{equation*}
    \begin{aligned}
    &\Big| \dd t \int(1+|x|^{2k})(|\dot v|^2+|\grad^k \dot v|^2 + |\grad v|^2+|\grad^{k+1}v|^2)\ud x\Big| \\
    \lesssim &\Big(\int(1+|x|^{2k})(|\dot v|^2+|\grad^k \dot v|^2 + |\grad v|^2+|\grad^{k+1}v|^2)\ud x\Big)^\frac{2k-1}{2k}\cdot\|\bs v\|_{X^k\times H^k}^\frac 1k,
  \end{aligned}
  \end{equation*}
  which gives, using \eqref{eq:group-cu} and integrating,
  \begin{equation*}
    \int(1+|x|^{2k})(|\dot v|^2+|\grad^k \dot v|^2 + |\grad v|^2+|\grad^{k+1}v|^2)\ud x \lesssim (1+t^{k(k+1)})\|\bs v_0\|_{Y^{k+1}\times Y^k}^2.
  \end{equation*}
  Now we can easily bound the $L^2$ term by Schwarz inequality:
  \begin{equation*}
    \Big| \dd t \int(1+|x|^{2k})|v|^2\ud x\Big| \lesssim \Big(\int(1+|x|^{2k})|v|^2\ud x\Big)^\frac 12\cdot \Big(\int(1+|x|^{2k})|\dot v|^2\ud x\Big)^\frac 12,
  \end{equation*}
  which leads to
  \begin{equation*}
    \int(1+|x|^{2k})|v|^2\ud x \lesssim (1+t^{k(k+1)+2})\|\bs v_0\|_{Y^{k+1}\times Y^k}^2.
  \end{equation*}
\end{proof}

We fix $k \in \bN$ large enough. For $\wt \nu > 0$ the space $BC_{\wt\nu}$ is defined as the space of continuous functions $\bs v: [0, +\infty) \to Y^{k+1}\times Y^k$ with the norm
  \begin{equation*}
    \|\bs v\|_{BC_{\wt\nu}} := \sup_{t\in[0, +\infty)}\eee^{\wt\nu t}\|\bs v(t)\|_{Y^{k+1}\times Y^k}.
  \end{equation*}
  \begin{lemma}
    \label{lem:nonhom}
    If $\wt \nu \in (0, \nu)$, then for any $\bs w \in BC_{\wt\nu}$ the equation
    \begin{equation}
      \label{eq:nonhom-eq}
      \partial_t \bs v(t) = A\bs v(t) + \bs w(t)
    \end{equation}
    has a unique solution $\bs v  = K\bs w\in BC_{\wt\nu}$ such that $\la\alpha^-, \bs v(0)\ra = 0$.
    
    
    In addition, $K$ is a bounded linear operator on $BC_{\wt\nu}$.
  \end{lemma}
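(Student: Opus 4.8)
The plan is to write down an explicit Duhamel-type formula adapted to the spectral splitting of $A$, and then to extract existence, uniqueness and boundedness from the group estimates of Lemma~\ref{lem:group}. Recall that $\pi_s\bs v = \la\alpha^-,\bs v\ra\cY^-$ projects onto the stable line and $\pi_{cu} = \Id - \pi_s$ onto $\cX_\tx{cu}$, both continuously on $Y^{k+1}\times Y^k$. For $\bs w\in BC_{\wt\nu}$ I would set
\begin{equation*}
  (K\bs w)(t) := \int_0^t \eee^{(t-s)A}\pi_s\bs w(s)\ud s - \int_t^{+\infty}\eee^{(t-s)A}\pi_{cu}\bs w(s)\ud s,\qquad t\geq 0.
\end{equation*}
The choice of integration limits is the point of the construction: the stable part is propagated forward from $0$, whereas the centre–unstable part is propagated backward from $+\infty$, so that the (at most polynomial) growth of $\eee^{-tA}$ on $\cX_\tx{cu}$ given by \eqref{eq:group-cu-Y} is absorbed against the exponential decay of $\bs w$; the exponentially growing unstable mode is killed precisely because of the backward integration.

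Next I would check that $K\bs w\in BC_{\wt\nu}$ with $\|K\bs w\|_{BC_{\wt\nu}}\lesssim\|\bs w\|_{BC_{\wt\nu}}$. For the first integral, $\pi_s\bs w(s) = \la\alpha^-,\bs w(s)\ra\cY^-$ with $|\la\alpha^-,\bs w(s)\ra|\lesssim\eee^{-\wt\nu s}\|\bs w\|_{BC_{\wt\nu}}$ (since $\alpha^-\in\cE^*$ and $Y^{k+1}\times Y^k\hookrightarrow\cE$), and $\eee^{(t-s)A}\cY^- = \eee^{-\nu(t-s)}\cY^-$ by \eqref{eq:group-s-Y}; hence its $Y^{k+1}\times Y^k$-norm is $\lesssim \int_0^t\eee^{-\nu(t-s)}\eee^{-\wt\nu s}\ud s\cdot\|\bs w\|_{BC_{\wt\nu}}\lesssim \eee^{-\wt\nu t}\|\bs w\|_{BC_{\wt\nu}}$, where the hypothesis $\wt\nu<\nu$ is exactly what is needed. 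For the second integral, $\pi_{cu}\bs w(s)\in\cX_\tx{cu}$, so by \eqref{eq:group-cu-Y} its norm is $\lesssim\int_t^{+\infty}\bigl(1+(s-t)^{k(k+1)/2+1}\bigr)\eee^{-\wt\nu s}\ud s\cdot\|\bs w\|_{BC_{\wt\nu}} = \eee^{-\wt\nu t}\int_0^{+\infty}(1+\tau^{k(k+1)/2+1})\eee^{-\wt\nu\tau}\ud\tau\cdot\|\bs w\|_{BC_{\wt\nu}}\lesssim\eee^{-\wt\nu t}\|\bs w\|_{BC_{\wt\nu}}$ for every $\wt\nu>0$. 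Continuity of $t\mapsto(K\bs w)(t)$ and the fact that $\bs v=K\bs w$ solves \eqref{eq:nonhom-eq} follow from the standard properties of the variation-of-constants formula for $\eee^{tA}$; since $A$ maps $Y^{k+1}\times Y^k$ boundedly into a space with one less weight and derivative, the equation is satisfied classically there. At $t=0$ the first integral vanishes and the second lies in $\cX_\tx{cu}=\ker\alpha^-$, so $\la\alpha^-,(K\bs w)(0)\ra=0$; linearity of $K$ is clear from the formula.

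For uniqueness, suppose $\bs v_1,\bs v_2\in BC_{\wt\nu}$ both solve \eqref{eq:nonhom-eq} and satisfy the constraint; then $\bs h:=\bs v_1-\bs v_2$ solves $\partial_t\bs h=A\bs h$, lies in $BC_{\wt\nu}$, and satisfies $\la\alpha^-,\bs h(0)\ra=0$. By \eqref{eq:eigencovect}, $\dd t\la\alpha^+,\bs h\ra=\nu\la\alpha^+,\bs h\ra$, so $\la\alpha^+,\bs h(t)\ra=\eee^{\nu t}\la\alpha^+,\bs h(0)\ra$; since $\bs h(t)\to0$ in $Y^{k+1}\times Y^k$ this forces $\la\alpha^+,\bs h(0)\ra=0$, whence $\bs h(0)\in\ker\alpha^+\cap\ker\alpha^-=\cX_\tx{c}$ and $\bs h(t)=\eee^{tA}\bs h(0)\in\cX_\tx{c}$ for all $t$. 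Writing $\bs h(t)=l(t)\Lambda_\cE\bs W+\bs w(t)$ with $\bs w(t)\in\cX_\tx{cc}$ as in the proof of Proposition~\ref{prop:group}, the norm $\|\bs w(t)\|_{A,k}$ is constant (the group generated by $B$ is unitary) and equivalent to $\|\bs w(t)\|_{X^k\times H^k}$ by Lemma~\ref{lem:normA}; since $\bs h(t)\to0$ in $X^k\times H^k$ (using $Y^{k+1}\times Y^k\hookrightarrow X^k\times H^k$) and $\pi_{cc}$ is bounded, $\bs w(t)\to0$, hence $\bs w\equiv0$. Then \eqref{eq:rown-l} gives $l(t)\equiv l(0)$, so $\bs h(t)\equiv l(0)\Lambda_\cE\bs W$, and letting $t\to+\infty$ yields $l(0)=0$; thus $\bs h\equiv0$.

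I expect the only real difficulty to be bookkeeping: correctly pairing each spectral block with the contour that makes its Duhamel integral decay, and observing that the polynomial factor in \eqref{eq:group-cu-Y} is harmless once the source decays exponentially — this is the usual Lyapunov--Perron estimate, here considerably simplified by linearity (no contraction argument is required, the formula being exact). A secondary point worth stating carefully is the precise sense in which $K\bs w$ solves \eqref{eq:nonhom-eq}: because $A$ is bounded from $Y^{k+1}\times Y^k$ into the space with one fewer derivative and weight, the Duhamel formula produces a genuine $C^1$ solution with values in that larger space.
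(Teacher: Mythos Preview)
Your proof is correct and follows essentially the same route as the paper: both arrive at the Lyapunov--Perron formula
\[
(K\bs w)(t)=\int_0^t \eee^{(t-s)A}\pi_s\bs w(s)\ud s-\int_t^{+\infty}\eee^{(t-s)A}\pi_{cu}\bs w(s)\ud s
\]
and bound it using the group estimates \eqref{eq:group-s-Y}, \eqref{eq:group-cu-Y}. The only difference is in organisation: the paper first \emph{derives} the formula by assuming a solution $\bs v\in BC_{\wt\nu}$ exists, applying $\eee^{-tA}\pi_{cu}$ to Duhamel and letting $t\to+\infty$ to pin down $\pi_{cu}\bs v_0$ (which, together with $\la\alpha^-,\bs v_0\ra=0$, determines $\bs v_0$ and hence $\bs v$); uniqueness thus comes for free, and only boundedness remains to be checked. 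You instead posit the formula, verify it works, and then prove uniqueness by a separate argument on the homogeneous equation using the unitarity of $\eee^{tB}$ on $\cX_\tx{cc}$. Both are valid; the paper's order is slightly more economical.
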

  \begin{proof}
%
    Suppose that $\bs v \in BC_{\wt\nu}$ verifies \eqref{eq:nonhom-eq}. Denote $\bs v_0 = \bs v(0)$.
    From the Duhamel formula we obtain
    \begin{equation}
      \label{eq:duhamel-2}
      \bs v(t) = \eee^{tA}\bs v_0 + \int_0^t\eee^{(t-\tau)A}\bs w(\tau)\ud \tau\ \Rightarrow\ \eee^{-tA}\pi_{cu} \bs v(t) = \pi_{cu}\bs v_0 + \int_0^t \eee^{-\tau A}\pi_{cu} \bs w(\tau)\ud \tau.
    \end{equation}
    By assumption, $\|\bs v(t)\|_{Y^{k+1}\times Y^k} \lesssim \eee^{-\wt\nu}$, hence from \eqref{eq:group-cu} we infer $\eee^{-tA}\pi_{cu}\bs v(t) \lesssim (1+t^\kappa)\eee^{-\wt\nu}$, $\kappa := \frac 12 k(k+1) + 1$.
    Passing to the limit $t \to +\infty$ yields
    \begin{equation*}
      \pi_{cu} \bs v_0 = -\int_0^{+\infty}\eee^{-\tau A}\pi_{cu} \bs w(\tau)\ud \tau.
    \end{equation*}
    If we require $\la\alpha^-, \bs v_0\ra = 0$, this determines uniquely $\bs v_0 = \pi_{cu} \bs v_0$, hence, using \eqref{eq:duhamel-2},
    \begin{equation*}
      \bs v(t) = K\bs w(t) = -\int_t^{+\infty}\eee^{(t-\tau) A}\pi_{cu} \bs w(\tau)\ud \tau + \int_0^t\eee^{(t-\tau) A}\pi_{s} \bs w(\tau)\ud \tau.
    \end{equation*}
    From \eqref{eq:group-s} and \eqref{eq:group-cu} we obtain
    \begin{equation*}
      \begin{aligned}
      \|K \bs w(t)\|_{Y^{k+1}\times Y^k} &\lesssim \|\bs w\|_{BC_{\wt\nu}}\cdot\Big(\int_t^{+\infty}(1+(\tau-t)^\kappa)\eee^{-\wt \nu\tau}\ud \tau + \int_0^t \eee^{(\tau-t)\nu}\eee^{-\wt \nu\tau}\ud \tau\Big) \\
      &\lesssim \|\bs w\|_{BC_{\wt\nu}}\cdot \eee^{-\wt \nu t},
    \end{aligned}
    \end{equation*}
    so $K$ is a bounded operator.
  \end{proof}
  \begin{remark}
    \label{rem:stable-all}
    By linearity the unique solution of \eqref{eq:nonhom-eq} such that $\la\alpha^-, \bs v(0))\ra= a$ is $\bs v(t) = (K\bs w)(t) + \eee^{-\nu t}a\cY^-$.
  \end{remark}
\subsection{Construction of $\bs U_\lambda^a$}
\label{ssec:coer-constr}
As noted earlier, the functions $\bs U_\lambda^a$ were constructed in \cite[Section~6]{DM08}.
However, the construction given there does not give the additional regularity or decay,
which is required in the present paper. For this reason, we provide here a different construction.
Our construction is an adaptation of a classical ODE proof, see for instance \cite[Chapter 3.6]{chow-hale}.

We denote $$\cR(v) := f(W+v) - f(W) - f'(W)v.$$
\begin{lemma}
  \label{lem:stable-mani}
  Let $\wt \nu \in (0, \nu)$. There exist $\eta > 0$ such that for every $b\in\bR, |b| < \eta$
  there is a unique solution $\bs v = \bs v^b \in BC_{\wt \nu}$ of the equation
  \begin{equation}
    \label{eq:stable-eq}
    \partial_t \bs v(t) = A\bs v(t) + \cR(\bs v(t))
  \end{equation}
  such that $\la\alpha^-, \bs v(0)\ra = b$ and $\|\bs v\|_{BC_{\wt\nu}} < \eta$. Moreover, $\bs v^b$ is analytic with respect to $b$.
\end{lemma}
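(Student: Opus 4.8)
The plan is to reformulate the stable-manifold problem as a fixed-point equation in the Banach space $BC_{\wt\nu}$ and invoke the analytic implicit function theorem (or the contraction mapping principle with a parameter), exactly as in the classical ODE argument of \cite[Chapter 3.6]{chow-hale}. The key point is that the nonlinearity $\cR(\bs v) = f(W+v) - f(W) - f'(W)v$ is, by Taylor expansion, quadratic in $v$ for small $v$, so it produces the needed smallness to close the contraction.

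\medskip

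\emph{Step 1: Reformulation as a fixed-point equation.} By Remark~\ref{rem:stable-all}, a function $\bs v \in BC_{\wt\nu}$ solves \eqref{eq:stable-eq} with $\la\alpha^-, \bs v(0)\ra = b$ if and only if
\begin{equation*}
  \bs v = \Phi_b(\bs v) := \eee^{-\nu t}b\,\cY^- + K\big(\cR(\bs v(\cdot))\big),
\end{equation*}
where $K$ is the bounded linear operator from Lemma~\ref{lem:nonhom}. Here one must first check that $t \mapsto \cR(\bs v(t))$ indeed belongs to $BC_{\wt\nu}$ whenever $\bs v$ does and $\|\bs v\|_{BC_{\wt\nu}}$ is small: this is where the weighted spaces $Y^{k+1}\times Y^k$ and the choice of $k$ large enough matter, since we need $Y^{k+1}$ to be an algebra-type space under the relevant multiplications and we need the pointwise bound $|\cR(v)| \lesssim |v|^2$ (for $N \leq 6$) or the appropriate Hölder-type substitute for large $N$, combined with the exponential decay of the $\cY^-$ profile that is being iterated. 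A routine estimate then gives $\|\cR(\bs v)\|_{BC_{\wt\nu}} \lesssim \|\bs v\|_{BC_{\wt\nu}}^2$ and, more importantly, the Lipschitz bound $\|\cR(\bs v_1) - \cR(\bs v_2)\|_{BC_{\wt\nu}} \lesssim (\|\bs v_1\|_{BC_{\wt\nu}} + \|\bs v_2\|_{BC_{\wt\nu}})\,\|\bs v_1 - \bs v_2\|_{BC_{\wt\nu}}$.

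\medskip

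\emph{Step 2: Contraction on a small ball.} Fix $\eta > 0$ small. For $|b| < \eta/(2C_K\|\cY^-\|)$, where $C_K = \|K\|$, the map $\Phi_b$ sends the closed ball $\{\|\bs v\|_{BC_{\wt\nu}} \leq \eta\}$ into itself: indeed $\|\Phi_b(\bs v)\|_{BC_{\wt\nu}} \leq \|\cY^-\|\,|b| + C_K \cdot C\eta^2 \leq \eta/2 + \eta/2$ once $\eta$ is chosen so that $C_K C\eta < 1/2$. On the same ball $\Phi_b$ is a contraction with constant $C_K \cdot 2C\eta < 1$, by the Lipschitz bound above. By the Banach fixed-point theorem there is a unique $\bs v^b$ in this ball, which is the desired solution; it satisfies $\la\alpha^-, \bs v^b(0)\ra = b$ automatically because $K$ lands in $\ker\langle\alpha^-, \cdot(0)\rangle$ and the added term contributes exactly $b$.

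\medskip

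\emph{Step 3: Analyticity in $b$.} Since $\bs v \mapsto \cR(\bs v)$ is a real-analytic map from a neighbourhood of $0$ in $BC_{\wt\nu}$ to $BC_{\wt\nu}$ (it is given by a convergent power series whose terms are bounded multilinear forms, as $f$ is real-analytic away from the origin and $W$ is bounded below on compacts — or one simply checks that the Gateaux derivatives exist and are analytic), the map $(b, \bs v) \mapsto \bs v - \Phi_b(\bs v)$ is analytic, vanishes at $(0,0)$, and has derivative $\Id$ in $\bs v$ there (because $\vD\cR(0) = 0$). The analytic implicit function theorem in Banach spaces then yields that $b \mapsto \bs v^b$ is analytic on a possibly smaller interval $|b| < \eta$.

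\medskip

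\textbf{The main obstacle} I expect is Step~1: verifying that $\cR$ maps $BC_{\wt\nu}$ into itself with quadratic bounds \emph{in the weighted high-regularity norm} $Y^{k+1}\times Y^k$. One must control $k$ derivatives of $f(W+v)-f(W)-f'(W)v$ against polynomial weights $(1+|x|^{k+1})$; the dangerous terms are those where many derivatives fall on $f^{(j)}(W+v)$ factors and those where the polynomial weight is not absorbed by decay. The weights help rather than hurt on the $v$-factors (which decay like $\cY^-$) but one needs Leibniz, Hölder in the right Sobolev-critical exponents, Hardy-type inequalities near the origin, and the pointwise smoothness of $f$ on the range of $W+v$ (which stays bounded away from any singularity of $f^{(k)}$ as long as $v$ is small in $L^\infty$, guaranteed by Sobolev embedding for $k$ large). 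Once these multilinear estimates are in place the contraction and analyticity are entirely standard.
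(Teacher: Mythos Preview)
Your proposal is correct and follows essentially the same approach as the paper: define $T(\bs v, b) = \eee^{-\nu t}b\,\cY^- + K(\cR(\bs v))$, observe via Remark~\ref{rem:stable-all} that solutions are exactly fixed points, and apply the Uniform Contraction Principle (with analytic dependence on the parameter) from \cite{chow-hale}. The technical point you flag as the main obstacle---that $\cR$ is analytic and quadratically small on $BC_{\wt\nu}$---is exactly what the paper isolates as Lemma~\ref{lem:nonlin-anal}, proved by expanding $f(W+v)$ as a power series in $v/W$ after conjugating by the weight $(1+|x|^k)$.
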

\begin{proof}
  Let $T: BC_{\wt\nu}\times \bR \to BC_{\wt\nu}$ be defined by the formula
  \begin{equation*}
    T(\bs v, b) := \eee^{-\nu t}b\cY^- + K(\cR(\bs v)),
  \end{equation*}
  where $K$ is the operator from Lemma~\ref{lem:nonhom}.
  Then $\bs v$ is a solution of \eqref{eq:stable-eq} if and only if $T(\bs v, b) = \bs v$ (see Remark \ref{rem:stable-all}).
  
  It follows from Lemma~\ref{lem:nonlin-anal} that on some neighbourhood of the origin $T$ is analytic and a uniform contraction with respect to $\bs v$,
  hence the conclusion follows from the Uniform Contraction Principle, cf. \cite[Theorem 2.2]{chow-hale}.
\end{proof}
\begin{proposition}
  \label{prop:Ua}
  For any $k \in \bN$ there exists $\eta > 0$ and an analytic function
  $$
  (-\eta, \eta) \owns a \mapsto \bs U^a -\bs W \in Y^{k+1}\times Y^k
$$
such that
\begin{align}
  \bs U^0 &= \bs W, \label{eq:Ua-0} \\
  \partial_a \bs U^a\vert_{a=0} &= \cY^-, \label{eq:daUa-0} \\
  -\nu a \partial_a \bs U^a &= J\circ \vD E(\bs U^a).\label{eq:Ua-diff-eq}
\end{align}
\end{proposition}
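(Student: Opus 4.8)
The plan is to build $\bs U^a$ out of the family $\{\bs v^b\}$ furnished by Lemma~\ref{lem:stable-mani}, after reparametrizing $b$ by an analytic function $h$ of a new variable $a$, chosen so that the flow of \eqref{eq:nlw} along the family becomes exactly linear, $a \mapsto a\eee^{-\nu t}$, in the variable $a$.

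I would first observe that forward time translation acts on this family. For $|b| < \eta$ and $t \geq 0$, the function $\bs v^b(\cdot + t)$ again solves \eqref{eq:stable-eq} and satisfies $\|\bs v^b(\cdot + t)\|_{BC_{\wt\nu}} \leq \|\bs v^b\|_{BC_{\wt\nu}} < \eta$; setting $\beta(b,t) := \la\alpha^-, \bs v^b(t)\ra$, which stays below the threshold of Lemma~\ref{lem:stable-mani} once $|b|$ is small, the uniqueness clause of that lemma forces $\bs v^b(\cdot + t) = \bs v^{\beta(b,t)}$, and in particular $\bs v^b(t) = \bs v^{\beta(b,t)}(0)$. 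Feeding the Duhamel formula for \eqref{eq:stable-eq} into this, and using that \eqref{eq:eigencovect} gives $\la\alpha^-, \eee^{sA}\bs g\ra = \eee^{-\nu s}\la\alpha^-, \bs g\ra$, one obtains that $t \mapsto \beta(b,t)$ is $C^1$ and solves the scalar ODE $\partial_t\beta = g(\beta)$, where $g(c) := -\nu c + \la\alpha^-, \cR(\bs v^c(0))\ra$. Since $b \mapsto \bs v^b$ is analytic into $BC_{\wt\nu}$ and $\cR$ vanishes to second order at the origin ($\cR(0) = 0$, $\vD\cR(0) = 0$), the map $g$ is analytic near $0$ with $g(0) = 0$ and $g'(0) = -\nu$.

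Next I would linearize this scalar flow. Because $g'(0) = -\nu \neq 0$, the function $\psi(c) := -\nu/g(c) - 1/c$ extends analytically across $0$, so $\phi(c) := c\exp\big(\int_0^c \psi(\sigma)\ud\sigma\big)$ is analytic near $0$ with $\phi(0) = 0$, $\phi'(0) = 1$ and $\phi'(c)g(c) = -\nu\phi(c)$; its inverse $h := \phi^{-1}$ is then analytic near $0$ with $h(0) = 0$, $h'(0) = 1$ and $-\nu a\, h'(a) = g(h(a))$. Fixing $\eta > 0$ small enough that $h$ is analytic on $(-\eta,\eta)$ with values in the parameter interval of Lemma~\ref{lem:stable-mani}, I would define
\begin{equation*}
  \bs U^a := \bs W + \bs v^{h(a)}(0), \qquad a \in (-\eta,\eta).
\end{equation*}
Analyticity of $a \mapsto \bs U^a - \bs W \in Y^{k+1}\times Y^k$ is then immediate, being a composition of analytic maps with the bounded evaluation at $t = 0$; $\bs U^0 = \bs W$ since uniqueness forces $\bs v^0 \equiv 0$, which is \eqref{eq:Ua-0}; and differentiating the fixed point identity $\bs v^b = \eee^{-\nu t}b\,\cY^- + K(\cR(\bs v^b))$ at $b = 0$ and using $\vD\cR(0) = 0$ gives $\partial_b\bs v^b\big|_{b=0} = \eee^{-\nu t}\cY^-$, whence $\partial_a\bs U^a\big|_{a=0} = h'(0)\,\partial_b\bs v^b(0)\big|_{b=0} = \cY^-$, which is \eqref{eq:daUa-0}.

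Finally, for \eqref{eq:Ua-diff-eq} I would fix $a$ and set $a(t) := a\eee^{-\nu t} \in (-\eta,\eta)$ for $t \geq 0$. The conjugacy relation $-\nu a'\,h'(a') = g(h(a'))$ shows that $t \mapsto h(a(t))$ solves $\partial_t y = g(y)$ with $y(0) = h(a)$, and by the first step so does $t \mapsto \beta(h(a), t)$; hence they coincide, so that $\bs W + \bs v^{h(a)}(t) = \bs W + \bs v^{\beta(h(a),t)}(0) = \bs U^{a(t)}$ for all $t \geq 0$. The left-hand side is a solution of \eqref{eq:nlw} (this is what \eqref{eq:stable-eq} says, $\bs W$ being stationary), so its $t$-derivative at $0$ equals $J\circ\vD E(\bs U^a)$; the right-hand side is differentiable in $t$ by analyticity of $a' \mapsto \bs U^{a'}$, with derivative $-\nu a\,\partial_a\bs U^a$ at $0$. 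Equating the two yields \eqref{eq:Ua-diff-eq}. I expect the only genuine obstacle to be the first step — upgrading the pointwise-in-time uniqueness of Lemma~\ref{lem:stable-mani} to a genuine analytic semiflow on the one-dimensional parameter — after which everything reduces to the classical linearization of a scalar flow with nonzero linear part, together with routine tracking of analyticity and of the small constants.
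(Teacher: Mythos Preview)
Your proof is correct and follows essentially the same approach as the paper: both use forward invariance of the family $\{\bs v^b\}$ from Lemma~\ref{lem:stable-mani} to derive a scalar ODE $b' = \psi(b)$ with $\psi(0)=0$, $\psi'(0)=-\nu$, then analytically conjugate it to $a' = -\nu a$ and set $\bs U^a := \bs W + \bs v^{b(a)}(0)$. The only differences are cosmetic --- you construct the linearizing diffeomorphism $\phi$ explicitly (this is exactly the content of the paper's Lemma~\ref{lem:hartman-1}, which it invokes instead) and you spell out the verification of \eqref{eq:Ua-diff-eq} by differentiating the trajectory $t\mapsto \bs U^{a\eee^{-\nu t}}$ at $t=0$, which the paper leaves implicit.
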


\begin{proof}
Evaluation at $t = 0$ is a bounded linear operator from $BC_{\wt \nu}$ to $Y^{k+1}\times Y^k$, hence
$\{\bs v^b(0): b \in (-\eta, \eta)\}$ defines an analytic curve in $Y^{k+1}\times Y^k$.
We have $\|\bs v^b\|_{BC_{\wt\nu}} \lesssim |b|$, so $\|\cR(\bs v^b)\|_{BC_{\wt\nu}} \lesssim |b|^2$.
By construction, $\bs v^b$ satisfies the equation
\begin{equation*}
  \bs v^b = b\eee^{-\nu t}\cY^- + K(\cR(\bs v^b)),
\end{equation*}
hence $\|\bs v^b - b\eee^{-\nu t}\cY^-\|_{BC_{\wt\nu}} \lesssim |b|^2$, in particular
\begin{equation*}
  \|\bs v^b(0) - b \cY^-\|_{Y^{k+1}\times Y^k} \lesssim |b|^2.
\end{equation*}
Because of uniqueness in Lemma~\ref{lem:stable-mani}, the set $\{\bs v^b(0): b \in (-\eta, \eta)\}$ is forward invariant if $\eta$ is small enough,
hence for all $b\in (-\eta, \eta)$ there exists a function $b(t)$ such that $\bs v^b(t) = \bs v^{b(t)}(0)$. The value of $b(t)$ is determined
by the condition $$\la\alpha^-, \bs v^b(t)\ra = b(t).$$
Differentiating in time this condition we find
\begin{equation*}
  b'(t) = \dd t\la\alpha^-, \bs v^b(t)\ra = \big\la\alpha^-, J\circ \vD E(\bs W + \bs v^b(t))\big\ra = \big\la\alpha^-, J\circ \vD E(\bs W + \bs v^{b(t)}(0))\big\ra = \psi(b(t)),
\end{equation*}
where $\psi$ is an analytic function, $\psi(0) = 0$ and $\psi'(0) = -\nu$. By Lemma~\ref{lem:hartman-1}, there exists an analytic change of variable $a = a(b)$
which transforms the equation $b'(t) = \psi(b(t))$ into $a'(t) = -\nu a(t)$ and such that $a(0) = 0$, $a'(0) = 1$. We define
\begin{equation*}
  \bs U^a := \bs W + \bs v^{b(a)}(0).
\end{equation*}

\end{proof}

We will denote $\bs U^a_\lambda := (\bs U^a)_\lambda$. Rescaling \eqref{eq:Ua-0}, \eqref{eq:daUa-0} and \eqref{eq:Ua-diff-eq} we obtain
\begin{align}
  \bs U^0_\lambda &= \bs W_\lambda, \\
  \partial_a \bs U^a_\lambda\vert_{a = 0} &= \cY^-_\lambda, \\
  \partial_a \bs U^a_\lambda &= -\frac{\lambda}{\nu a}J\circ\vD E(\bs U^a_\lambda). \label{eq:coer-daU}
\end{align}
\begin{remark}
  \label{rem:Wpm}
Note that \eqref{eq:coer-daU} implies that $\bs u(t) = \bs U_\lambda^{\pm \exp(-\frac{\nu}{\lambda} t)}$ is a solution of \eqref{eq:nlw} for large $t$. These are precisely the solutions $\bs W_\lambda^\pm$ mentioned in the Introduction.
\end{remark}
\subsection{Modulation near the stable manifold}
\label{ssec:coer-mod}
The results of this subsection will not be directly used in the proof of Theorem~\ref{thm:deux-bulles}.
We include them in the paper for their own interest and because the proofs introduce in a simple setting the main technical ideas
required in Section~\ref{sec:bulles}.

It is well known since the work of Payne and Sattinger \cite{PaSa75} that solutions of energy $< E(\bs W)$ leave a neighbourhood
of the family of stationary states. The aim of this subsection is to describe
an explicit local mechanism leading to this phenomenon, which is robust enough not to be significantly altered by the presence
of the second bubble (as will be the case in Section~\ref{sec:bulles}).

Note that nothing specific to the wave equation has been used so far, hence one might expect that all the proofs
of Section~\ref{sec:coer} should apply to similar (not necessarily critical) models in the presence of one instability direction
near a stationary state.
\begin{lemma}
  \label{lem:coer-mod}
  Let $\delta_0 > 0$ be sufficiently small. For any $0 \leq \delta \leq \delta_0$ there exists $0 \leq \eta = \eta(\delta) \sto{\delta \to 0} 0$ such that
  if $\bs u: (t_1, t_2) \to \cE$ is a solution of \eqref{eq:nlw} satisfying for all $t \in (t_1, t_2)$
  \begin{equation}
    \label{eq:coer-mod-close}
    \|\bs u(t) - \bs W_{\wt\lambda(t)}\|_\cE \leq \delta,\qquad \wt\lambda(t) > 0,
  \end{equation}
  then there exist unique functions $\lambda(t) \in C^1((t_1, t_2), (0, +\infty))$ and $a(t) \in C^1((t_1, t_2), \bR)$ such that for
  \begin{equation}
    \label{eq:coer-g}
    \bs g(t) := \bs u(t) - \bs U_{\lambda(t)}^{a(t)}
  \end{equation}
  the following holds for all $t \in (t_1, t_2)$:
  \begin{align}
    \la\cZ_\uln{\lambda(t)}, g(t)\ra = \la \alpha_{\lambda(t)}^-, \bs g(t)\ra &= 0, \label{eq:coer-gorth} \\
    \|\bs g(t)\|_\cE &\leq \eta, \label{eq:coer-gbound} \\
    |\lambda(t)/\wt \lambda(t) - 1|  + |a(t)| &\leq \eta. \label{eq:coer-lambda-range}
  \end{align}
  In addition,
  \begin{align}
    |\lambda'(t)| &\lesssim \|\bs g(t)\|_\cE, \label{eq:coer-mod-l} \\
    \big|a'(t) + \frac{\nu}{\lambda(t)}a(t)\big| &\lesssim \frac{1}{\lambda(t)}\big(|a(t)|\cdot\|\bs g(t)\|_\cE + \|\bs g(t)\|_\cE^2\big). \label{eq:coer-mod-a}
  \end{align}
\end{lemma}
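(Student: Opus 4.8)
The plan is to run a standard modulation argument based on the implicit function theorem and the exponential decay of $\cY$. For the \emph{static} modulation, consider
\[
\Phi\colon\cE\times(0,\infty)\times\bR\to\bR^2,\qquad \Phi(\bs u,\mu,a):=\big(\la\cZ_\uln\mu,\,u-[\bs U_\mu^a]_1\ra,\ \la\alpha_\mu^-,\,\bs u-\bs U_\mu^a\ra\big),
\]
where $[\,\cdot\,]_1$ denotes the first component. By \eqref{eq:Ua-0} we have $\Phi(\bs W_\mu,\mu,0)=0$, and a short computation using \eqref{eq:daUa-0}, \eqref{eq:Y}, \eqref{eq:Z}, \eqref{eq:YLW} and $\la\alpha^-,\cY^-\ra=1$ shows that the partial differential of $\Phi$ in $(\mu,a)$ at $(\bs W,1,0)$ equals the diagonal matrix with entries $\la\cZ,\Lambda W\ra>0$ and $-1$, the off-diagonal entries vanishing precisely because $\la\cZ,\cY\ra=0$ and $\la\cY,\Lambda W\ra=0$. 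The implicit function theorem then yields $\eta_0>0$ and a $C^1$ map $\bs v\mapsto(\mu(\bs v),a(\bs v))$ on $\{\|\bs v-\bs W\|_\cE<\eta_0\}$ solving $\Phi(\bs v,\cdot,\cdot)=0$, unique near $(1,0)$, with $|\mu(\bs v)-1|+|a(\bs v)|\to 0$ as $\bs v\to\bs W$. Given $\bs u$ with $\|\bs u-\bs W_{\wt\lambda}\|_\cE\le\delta\le\delta_0$, one applies this to $\bs v:=\bs u_{1/\wt\lambda}$ and sets $\lambda:=\wt\lambda\,\mu(\bs v)$, $a:=a(\bs v)$, $\bs g:=\bs u-\bs U_\lambda^a$; since $\cZ_\uln\lambda$, $\alpha_\lambda^-$ and $\bs U_\lambda^a$ are all covariant under the scaling $(\,\cdot\,)_\lambda$, this gives \eqref{eq:coer-gorth}, \eqref{eq:coer-gbound}, \eqref{eq:coer-lambda-range} with $\eta=\eta(\delta)\to0$ and the uniqueness statement. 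For the $C^1$ dependence of $\lambda,a$ on $t$, I would use $\bs u\in C^1((t_1,t_2);\cF)$ together with the fact that $\la\cZ_\uln\lambda,\cdot\ra$ and $\la\alpha_\lambda^-,\cdot\ra$ extend to bounded functionals on $\cF$ depending smoothly on $\lambda$ (because $\cZ\in C_0^\infty$ and $\cY$ is smooth and exponentially decaying), and apply the $C^1$ implicit function theorem with $t$ as an additional parameter.

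Next I would write down the evolution equation for $\bs g$. Differentiating \eqref{eq:coer-g} in $t$, using \eqref{eq:nlw}, the identity \eqref{eq:coer-daU} in the form $J\circ\vD E(\bs U_\lambda^a)=-\tfrac{\nu a}{\lambda}\partial_a\bs U_\lambda^a$, and Taylor's formula for $\vD E$ about $\bs U_\lambda^a$, one obtains
\[
\partial_t\bs g=J\circ\vD^2E(\bs U_\lambda^a)\bs g-\lambda'\,\partial_\lambda\bs U_\lambda^a-\Big(a'+\frac{\nu a}{\lambda}\Big)\partial_a\bs U_\lambda^a+\bs h,
\]
where, since $\vD E$ is affine in the velocity variable, $\bs h=(0,\rho)$ with $\rho:=f([\bs U_\lambda^a]_1+g)-f([\bs U_\lambda^a]_1)-f'([\bs U_\lambda^a]_1)g$ the nonlinear Taylor remainder; in particular $[\bs h]_1=0$. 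Moreover $[J\circ\vD^2E(\bs U_\lambda^a)\bs g]_1=\dot g$, because $\vD^2E$ is block-diagonal with the identity on the velocity block.

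The modulation inequalities should then follow by differentiating the two conditions in \eqref{eq:coer-gorth} in $t$ and substituting the formula above. In the equation coming from $\la\cZ_\uln\lambda,g\ra=0$, the coefficient of $\lambda'$ is $\la\cZ,\Lambda W\ra+O(|a|+\|\bs g\|_\cE)$, bounded away from $0$; the only term not carrying a factor $\lambda'$ or $a'+\tfrac{\nu a}{\lambda}$ is $\la\cZ_\uln\lambda,\dot g\ra=O(\|\bs g\|_\cE)$; the nonlinear error drops out because $[\bs h]_1=0$; and the remaining terms are controlled by the scaling identities for $\cZ_\uln\lambda$ and by \eqref{eq:daUa-0}, \eqref{eq:Z} --- this gives \eqref{eq:coer-mod-l}. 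In the equation coming from $\la\alpha_\lambda^-,\bs g\ra=0$, the coefficient of $a'+\tfrac{\nu a}{\lambda}$ is $-1+O(|a|)$; subtracting the $\bs W_\lambda$-linearization (annihilated by \eqref{eq:eigencovectl} and $\la\alpha_\lambda^-,\bs g\ra=0$), the term $\la\alpha_\lambda^-,J\circ\vD^2E(\bs U_\lambda^a)\bs g\ra$ becomes $-\tfrac12\int\cY_\uln\lambda\big(f'([\bs U_\lambda^a]_1)-f'(W_\lambda)\big)g\ud x$, and $\la\alpha_\lambda^-,\bs h\ra=-\tfrac12\int\cY_\uln\lambda\,\rho\ud x$; both are integrals against the exponentially decaying $\cY_\uln\lambda$, hence (after rescaling) essentially confined to $|x|\lesssim\lambda$, where $W_\lambda$ is bounded below and $f'$ is $C^1$, giving respectively $O(\tfrac1\lambda|a|\,\|\bs g\|_\cE)$ and $O(\tfrac1\lambda\|\bs g\|_\cE^2)$. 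Treating the two identities as an invertible $2\times2$ linear system for $(\lambda',\,a'+\tfrac{\nu a}{\lambda})$ with inverse bounded uniformly in $\lambda$ and solving yields \eqref{eq:coer-mod-l} and \eqref{eq:coer-mod-a}.

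I expect the last step to be the main obstacle. The static modulation is routine once the non-degeneracy of the Jacobian --- which is exactly where \eqref{eq:Z} and \eqref{eq:YLW} are used --- is noticed. The genuinely delicate point is that in high dimension $f'$ is only Hölder continuous, so one cannot bound $\|f'([\bs U_\lambda^a]_1)-f'(W_\lambda)\|_{L^{N/2}}$ or $\|\rho\|_{L^{N/2}}$ by $|a|\,\|\bs g\|_\cE$ and $\|\bs g\|_\cE^2$ respectively; one recovers the sharp inequality \eqref{eq:coer-mod-a} only by exploiting that each such quantity is paired against a smooth, rapidly decaying object ($\cZ_\uln\lambda$ or $\cY_\uln\lambda$), so that the integration localizes to the bulk $|x|\lesssim\lambda$ on which $W_\lambda>0$ is bounded below, together with the algebraic fact $[\bs h]_1=0$ which removes the nonlinear error from the $\cZ$-equation altogether.
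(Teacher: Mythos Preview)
Your proposal is correct and follows essentially the same approach as the paper. The only cosmetic differences are: the paper parameterizes by $l=\log\lambda$ rather than $\mu$; for the $C^1$ regularity in $t$ the paper writes down an auxiliary ODE for $(\wt l,\wt a)$ and then identifies it with $(l,a)$ via uniqueness in the IFT (your direct application of the $C^1$ implicit function theorem in the $\cF$-topology is equally valid); and in Step~3 the paper keeps the full difference $J\circ(\vD E(\bs U_\lambda^a+\bs g)-\vD E(\bs U_\lambda^a))$ rather than splitting it into the linearization plus your remainder $\bs h$, deferring the two estimates you flag in your last paragraph to a separate lemma (Lemma~\ref{lem:weak-linea}) whose proof is exactly the localization argument you describe.
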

\begin{proof}
  We follow a standard procedure, see for instance \cite[Proposition 1]{MaMe01}.
  \paragraph{\textbf{Step 1.}}
  We will first show that for fixed $t_0 \in (t_1, t_2)$ there exist unique $\lambda(t_0)$ and $a(t_0)$ such that
  \eqref{eq:coer-gorth}, \eqref{eq:coer-gbound} and \eqref{eq:coer-lambda-range} hold for $t = t_0$. Without loss of generality we can assume that $\wt \lambda(t_0) = 1$
  (it suffices to rescale everything).

  We consider $\Phi: \cE\times \bR^2 \to \bR^2$ defined as
  \begin{equation*}
    \begin{aligned}
    \Phi(\bs u_0; l_0, a_0) &= \big(\Phi_1(\bs u_0; l_0, a_0), \Phi_2(\bs u_0; l_0, a_0)\big) \\
    &:= \big(\la \eee^{-l_0}\cZ_\uln{\eee^{l_0}}, u_0 - U_{\eee^{l_0}}^{a_0}\ra, \la\alpha_{\eee^{l_0}}^-, \bs u_0 - \bs U_{\eee^{l_0}}^{a_0}\ra\big).
  \end{aligned}
  \end{equation*}
  One easily computes:
  \begin{align}
    \partial_{l}\Phi_1(\bs W; 0, 0) &= \la \cZ, \Lambda W\ra > 0, \\
    \partial_{l}\Phi_2(\bs W; 0, 0) &= 0, \\
    \partial_{a}\Phi_1(\bs W; 0, 0) &= 0, \\
    \partial_{a}\Phi_2(\bs W; 0, 0) &= -\la \alpha^-, \cY^-\ra = -1. \nonumber
  \end{align}
  Applying the Implicit Function Theorem with $\bs u_0 := \bs u(t_0)$ we obtain existence of parameters $a_0 =: a(t_0)$ and $\lambda_0 = \eee^{l_0} =: \lambda(t_0)$.
  \paragraph{\textbf{Step 2.}}
  We will show that $\lambda(t)$ (equivalently, $l(t) := \log(\lambda(t))$) and $a(t)$ are $C^1$ functions of $t$.

  Take $t_0 \in (t_1, t_2)$ and let $a_0 := a(t_0)$, $l_0 := \log(\lambda(t_0))$.
  Define $(\wt l, \wt a): (t_0-\varepsilon, t_0+\varepsilon) \to \bR^2$ as the solution of the differential equation
  \begin{equation}
    \label{eq:lambda-der}
    \dd t(\wt l(t), \wt a(t)) = -(\partial_{l, a} \Phi)^{-1}(\vD_{\bs u}\Phi)\partial_t \bs u(t)
  \end{equation}
  with the initial condition $\wt l(t_0) = l_0$, $\wt a(t_0) = a_0$.
  Notice that $\vD_{\bs v}\Phi$ is a continuous functional on $\cF$, so we can apply it to $\partial_t \bs u(t)$.

  Using the chain rule we get $\dd t \Phi(\bs u(t); \wt l(t), \wt a(t)) = 0$ for $t \in (t_0 - \varepsilon, t_0 + \varepsilon)$.
  By continuity, $|\wt l(t) - l_0| <\eta$ in some neighbourhood of $t = t_0$.
  Hence, by the uniqueness part of the Implicit Function Theorem, we get $\wt l(t) = \log \lambda(t)$ and $\wt a(t) = a(t)$ in some neighbourhood of $t = t_0$.
  In particular, $\lambda(t)$ and $a(t)$ are of class $C^1$ in some neighbourhood of $t_0$.
  \paragraph{\textbf{Step 3.}}
  From \eqref{eq:coer-g} we obtain the following differential equation of the error term $\bs g$:
\begin{equation*}
  \partial_t \bs g = \partial_t(\bs u - \bs U_\lambda^a) = J\circ\big(\vD E(\bs u) - \vD E(\bs U_\lambda^a)\big) - \big(\partial_t \bs U_\lambda^a - J\circ \vD E(\bs U_\lambda^a)\big).
\end{equation*}
We have
\begin{equation}
  \label{eq:deriv-t-U}
  \partial_t \bs U_\lambda^a = \lambda'\partial_\lambda \bs U_\lambda^a + a'\partial_a \bs U_\lambda^a = -\lambda'\cdot \frac{1}{\lambda}\Lambda_\cE\bs U_\lambda^a +a' \partial_a \bs U_\lambda^a,
\end{equation}
so using \eqref{eq:coer-daU} we get
\begin{equation}
  \label{eq:coer-mod-g}
  \partial_t \bs g = J\circ(\vD E(\bs U_\lambda^a + \bs g) - \vD E(\bs U_\lambda^a)) + \lambda'\cdot \frac{1}{\lambda}\Lambda_\cE\bs U_\lambda^a - \Big(a'+\frac{\nu a}{\lambda}\Big)\partial_a \bs U_\lambda^a.
\end{equation}
The first component reads:
\begin{equation*}
  \partial_t g = \dot g + \lambda'\Lambda U_\uln\lambda^a + \big(1+\frac{\lambda a'}{\nu a}\big)\dot U_\uln\lambda^a,
\end{equation*}
hence differentiating in time the first orthogonality relation $\la \frac{1}{\lambda}\cZ_\uln\lambda, g\ra = 0$ we obtain
  \begin{equation}
    \label{eq:mod-orth-1}
    0 = \dd t\la \frac{1}{\lambda}\cZ_\uln\lambda, g\ra = -\frac{\lambda'}{\lambda^2}\la \Lambda_0\cZ_\uln\lambda, g\ra + \frac{1}{\lambda}\la \cZ_\uln\lambda, \dot g\ra + \frac{\lambda'}{\lambda^2}\la\cZ_\uln\lambda, \Lambda U_\lambda^a\ra + \Big(\frac{1}{\lambda} + \frac{a'}{\nu a}\Big)\la \cZ_\uln\lambda, \dot U_\uln\lambda^a\ra.
  \end{equation}
  Differentiating the second orthogonality relation $\la \alpha_\lambda^-, \bs g\ra = 0$ and using \eqref{eq:coer-mod-g} we obtain
  \begin{equation*}
    \begin{aligned}
      0 = \dd t\la\alpha_\lambda^-, \bs g\ra &= -\frac{\lambda'}{\lambda}\la\Lambda_{\cE^*}\alpha_\lambda^-, \bs g\ra +
      \la\alpha_\lambda^-, J\circ(\vD E(\bs U_\lambda^a + \bs g) - \vD E(\bs U_\lambda^a))\ra \\
    &+ \frac{\lambda'}{\lambda}\la\alpha_\lambda^-, \Lambda_\cE\bs U_\lambda^a\ra - \Big(a' + \frac{\nu a}{\lambda}\Big)\big\la\alpha_\lambda^-, \partial_a \bs U_\lambda^a\big\ra.
  \end{aligned}
  \end{equation*}
  Together with \eqref{eq:mod-orth-1} this yields the following linear system for $\lambda'$ and $\lambda\Big(a' + \frac{\nu a}{\lambda}\Big)$:
  \begin{equation}
    \label{eq:coer-linsys}
    \begin{pmatrix}M_{11} & M_{12} \\ M_{21} & M_{22}\end{pmatrix}\begin{pmatrix}\lambda' \\ \lambda\big(a'+\frac{\nu a}{\lambda}\big)\end{pmatrix} = 
      \begin{pmatrix}-\la\cZ_\uln\lambda, \dot h\ra \\ -\lambda\la\alpha_\lambda^-, J\circ (\vD E(\bs U_\lambda^a + \bs g) - \vD E(\bs U_\lambda^a))\ra \end{pmatrix},
  \end{equation}
  where
  \begin{equation}
    \label{eq:coer-coef}
    \begin{aligned}
      M_{11} &= \frac{1}{\lambda}\la\cZ_\uln\lambda, \Lambda U_\lambda^a\ra - \frac{1}{\lambda}\la\Lambda_{-1}\cZ_\uln\lambda, g\ra, \\
      M_{12} &= \frac{1}{\nu a}\la\cZ_\uln\lambda, \dot U_\uln\lambda^a\ra,\\
      M_{21} &=- \la\Lambda_{\cE^*}\alpha_\lambda^-, \bs g\ra+ \la\alpha_\lambda^-, \Lambda_\cE\bs U_\lambda^a\ra, \\
      M_{22} &= -\la\alpha_\lambda^-, \partial_a \bs U_\lambda^a\ra.
    \end{aligned}
  \end{equation}
  Since $\frac{1}{\lambda}\la \cZ_\uln\lambda, \Lambda W_\lambda\ra \gtrsim 1$, $\la \alpha_{\lambda}^-, \Lambda_\cE \bs W\ra = 0$,
  $\la \alpha_{\lambda}^-, \cY_\lambda^-\ra = 1$, $\|\Lambda_\cE \bs U_\lambda^a - \Lambda_\cE \bs W_\lambda\|_\cE \lesssim |a|$
  and $\|\partial_a \bs U_\lambda^a - \cY_\lambda^-\|_\cE \lesssim |a|$, we see that
  $$
  \begin{aligned}
  &|M_{11}| \sim &1,\quad &|M_{12}| \lesssim &1, \\
  &|M_{21}| \lesssim &\|\bs g\|_\cE + |a|,\quad &|M_{22}| \sim &1.
\end{aligned}
$$
  Hence, $\Big|\det \begin{pmatrix}M_{11} & M_{12} \\ M_{21} & M_{22}\end{pmatrix}\Big| \gtrsim 1$ and we obtain
    \begin{equation}
      \label{eq:coer-matrix-inverse}
      \begin{aligned}
        |\lambda'| &\lesssim |M_{22}|\cdot|\la\cZ_\uln\lambda, \dot g\ra| + |M_{12}|\cdot|\lambda\la\alpha_\lambda^-, J\circ (\vD E(\bs U_\lambda^a + \bs g) - \vD E(\bs U_\lambda^a))\ra|, \\
        \big|a + \frac{\lambda a'}{\nu}\big| &\lesssim |M_{21}|\cdot|\la\cZ_\uln\lambda, \dot g\ra| + |M_{11}|\cdot|\lambda\la\alpha_\lambda^-, J\circ (\vD E(\bs U_\lambda^a + \bs g) - \vD E(\bs U_\lambda^a))\ra|.
      \end{aligned}
    \end{equation}
  Since $\la\alpha_\lambda^-, J\circ\vD^2 E(\bs W_\lambda)\bs g\ra = -\frac{\nu}{\lambda}\la\alpha_\lambda^-, \bs g\ra = 0$,
  Lemma~\ref{lem:weak-linea} implies that
  \begin{equation}
    \label{eq:coer-alpha-miracle}
      |\la\alpha_\lambda^-, J\circ(\vD E(\bs U_\lambda^a + \bs g) - \vD E(\bs U_\lambda^a))\ra| \lesssim \frac{1}{\lambda}\|\bs g\|_\cE\cdot(|a| + \|\bs g\|_\cE).
  \end{equation}
  Now \eqref{eq:coer-mod-l} and \eqref{eq:coer-mod-a} follow from \eqref{eq:coer-coef}, \eqref{eq:coer-matrix-inverse} and \eqref{eq:coer-alpha-miracle}.

\end{proof}

In the rest of this section $\lambda(t)$ and $a(t)$ denote the modulation parameters obtained in Lemma~\ref{lem:coer-mod} and $\bs g(t)$ is the function defined by \eqref{eq:coer-g}.

For given modulation parameters $\lambda$ and $a$ we define:
\begin{equation}
  \label{eq:beta}
  \beta_\lambda^a := -\frac{\nu}{2\lambda}J\partial_a \bs U^a_\lambda.
\end{equation}
We see that $\beta_\lambda^0 = -\frac{\nu}{2\lambda}J\cY_\lambda^- = \alpha_\lambda^+$,
and indeed it turns out that $\beta_\lambda^a$ is a refined version of $\alpha_\lambda^+$,
adapted to the situation when $|a| \gg \|\bs h\|_\cE$.

\begin{proposition}
  \label{prop:coer-dtb}
  The function
  $$ b(t):= \la \beta_{\lambda(t)}^{a(t)}, \bs g(t)\ra $$
  satisfies
  \begin{equation*}
    \big|\dd t b(t) - \frac{\nu}{\lambda(t)}b(t)\big| \lesssim \frac{1}{\lambda(t)}\cdot\|\bs g(t)\|_\cE^2.
  \end{equation*}
\end{proposition}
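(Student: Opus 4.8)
The plan is to differentiate $b(t)=\la\beta_{\lambda(t)}^{a(t)},\bs g(t)\ra$ directly in time, feeding in the evolution equation \eqref{eq:coer-mod-g} for $\bs g$, the defining relation \eqref{eq:coer-daU} for $\partial_a\bs U_\lambda^a$, and the modulation bounds \eqref{eq:coer-mod-l}--\eqref{eq:coer-mod-a}. The conceptual point, already visible from $\beta_\lambda^0=\alpha_\lambda^+$ and \eqref{eq:eigencovectl}, is that the normalization $\beta_\lambda^a=-\frac{\nu}{2\lambda}J\partial_a\bs U_\lambda^a$ makes $\beta_\lambda^a$ an \emph{approximate eigen-covector, with eigenvalue $\nu/\lambda$, of the linearization of \eqref{eq:nlw} around $\bs U_\lambda^a$} (not merely around $\bs W_\lambda$). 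By the product rule,
\begin{equation*}
  \dd t b=\la\beta_\lambda^a,\partial_t\bs g\ra+\lambda'\la\partial_\lambda\beta_\lambda^a,\bs g\ra+a'\la\partial_a\beta_\lambda^a,\bs g\ra,
\end{equation*}
and I would substitute the three summands of \eqref{eq:coer-mod-g} into $\la\beta_\lambda^a,\partial_t\bs g\ra$.

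Two of those three summands pair with $\beta_\lambda^a$ to give exactly zero. For the term $\lambda'\cdot\frac1\lambda\Lambda_\cE\bs U_\lambda^a$ one uses $\Lambda_\cE\bs U_\lambda^a=-\lambda\partial_\lambda\bs U_\lambda^a$ and \eqref{eq:coer-daU} (in the form $J\partial_a\bs U_\lambda^a=\frac{\lambda}{\nu a}\vD E(\bs U_\lambda^a)$) to get
\begin{equation*}
  \la\beta_\lambda^a,\Lambda_\cE\bs U_\lambda^a\ra=\tfrac{\nu}{2}\la J\partial_a\bs U_\lambda^a,\partial_\lambda\bs U_\lambda^a\ra=\tfrac{\lambda}{2a}\la\vD E(\bs U_\lambda^a),\partial_\lambda\bs U_\lambda^a\ra=\tfrac{\lambda}{2a}\,\partial_\lambda\big(E(\bs U^a)\big)=0
\end{equation*}
by scale-invariance of $E$. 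For the term $-\big(a'+\frac{\nu a}{\lambda}\big)\partial_a\bs U_\lambda^a$ one gets a multiple of $\la\beta_\lambda^a,\partial_a\bs U_\lambda^a\ra$, which is proportional to $\la J\partial_a\bs U_\lambda^a,\partial_a\bs U_\lambda^a\ra=0$ by skew-symmetry of $J$. Hence only the main term $\la\beta_\lambda^a,J\big(\vD E(\bs U_\lambda^a+\bs g)-\vD E(\bs U_\lambda^a)\big)\ra$ survives from $\la\beta_\lambda^a,\partial_t\bs g\ra$.

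For this surviving term I would split $\vD E(\bs U_\lambda^a+\bs g)-\vD E(\bs U_\lambda^a)=\vD^2E(\bs U_\lambda^a)\bs g+\bs N$ with $\bs N$ the purely nonlinear remainder (here $\bs N=(-\cR(\bs g),0)$ with $\cR$ centred at the first component of $\bs U_\lambda^a$). Using $\la J\bs v,J\bs w\ra=\la\bs v,\bs w\ra$, self-adjointness of $\vD^2E$, and differentiating \eqref{eq:coer-daU} in $a$ — which yields $\vD^2E(\bs U_\lambda^a)\partial_a\bs U_\lambda^a=\frac{\nu}{\lambda}J\partial_a\bs U_\lambda^a+\frac{\nu a}{\lambda}J\partial_a^2\bs U_\lambda^a$ — one finds
\begin{equation*}
  \la\beta_\lambda^a,J\vD^2E(\bs U_\lambda^a)\bs g\ra=\tfrac{\nu}{\lambda}\la\beta_\lambda^a,\bs g\ra+\tfrac{\nu a}{\lambda}\la\partial_a\beta_\lambda^a,\bs g\ra=\tfrac{\nu}{\lambda}b+\tfrac{\nu a}{\lambda}\la\partial_a\beta_\lambda^a,\bs g\ra,
\end{equation*}
since $\partial_a\beta_\lambda^a=-\frac{\nu}{2\lambda}J\partial_a^2\bs U_\lambda^a$. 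For the remainder, $\la\beta_\lambda^a,J\bs N\ra=-\frac{\nu}{2\lambda}\la\partial_a\bs U_\lambda^a,\bs N\ra$, and because $\partial_a\bs U_\lambda^a$ is smooth and decaying (recall $\bs U^a-\bs W$ is analytic into $Y^{k+1}\times Y^k$, Proposition~\ref{prop:Ua}), the weak-linearization estimate (Lemma~\ref{lem:weak-linea}, the same mechanism as in \eqref{eq:coer-alpha-miracle}) gives $|\la\partial_a\bs U_\lambda^a,\bs N\ra|\lesssim\|\bs g\|_\cE^2$, whence $|\la\beta_\lambda^a,J\bs N\ra|\lesssim\frac1\lambda\|\bs g\|_\cE^2$. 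Collecting, and recombining the $\frac{\nu a}{\lambda}\la\partial_a\beta_\lambda^a,\bs g\ra$ term with the product-rule term $a'\la\partial_a\beta_\lambda^a,\bs g\ra$,
\begin{equation*}
  \dd t b=\tfrac{\nu}{\lambda}b+\Big(a'+\tfrac{\nu a}{\lambda}\Big)\la\partial_a\beta_\lambda^a,\bs g\ra+\lambda'\la\partial_\lambda\beta_\lambda^a,\bs g\ra+O\Big(\tfrac1\lambda\|\bs g\|_\cE^2\Big).
\end{equation*}
It then remains to bound the two middle terms: from the $\lambda$-homogeneity of \eqref{eq:beta} one has $\|\partial_a\beta_\lambda^a\|_{\cE^*}\lesssim1$ and $\|\partial_\lambda\beta_\lambda^a\|_{\cE^*}\lesssim\frac1\lambda$, so with \eqref{eq:coer-mod-l}, \eqref{eq:coer-mod-a} and the smallness $|a|+\|\bs g\|_\cE\ll1$ from Lemma~\ref{lem:coer-mod} both terms are $O(\frac1\lambda\|\bs g\|_\cE^2)$ (alternatively, one reduces to $\lambda\sim1$ at the given time by scaling-invariance of the whole inequality), and the claim follows.

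I expect the substantive points to be twofold. First, making the two exact cancellations airtight together with the $a$-differentiation of \eqref{eq:coer-daU} — it is precisely this algebra that pins down the normalization $\beta_\lambda^a=-\frac{\nu}{2\lambda}J\partial_a\bs U_\lambda^a$ and produces the cancellation of the $\frac{\nu a}{\lambda}\la\partial_a\beta_\lambda^a,\bs g\ra$ term against the modulation defect $(a'+\frac{\nu a}{\lambda})$. Second, the nonlinear-remainder bound $|\la\partial_a\bs U_\lambda^a,\bs N\ra|\lesssim\|\bs g\|_\cE^2$ in high dimensions $N\geq6$, where $f$ is only $C^1$ so that $\bs N$ need not lie in $\cF$; here one cannot afford a crude $\cE^*$–$\cF$ duality and must genuinely exploit the regularity and decay of $\partial_a\bs U_\lambda^a$, which is exactly why the weighted-space construction of $\bs U_\lambda^a$ was carried out.
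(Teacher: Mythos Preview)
Your proof is correct and takes a genuinely different, more unified route than the paper's.

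The paper proceeds by case analysis: when $|a(t)|\leq\|\bs g(t)\|_\cE$ it replaces $\beta_\lambda^a$ by $\alpha_\lambda^+$ (paying an error $O(|a|\cdot\|\bs g\|_\cE)\leq O(\|\bs g\|_\cE^2)$) and uses the linear eigen-covector relation \eqref{eq:eigencovectl}; when $\|\bs g(t)\|_\cE\leq|a(t)|$ it uses the alternative representation $b=-\frac{1}{2a}\la\vD E(\bs U_\lambda^a),\bs g\ra$, exploits the exponential growth of $\frac{1}{a}$, and estimates $\dd t\la\vD E(\bs U_\lambda^a),\bs g\ra$ using the identities \eqref{eq:Ua-crit} and \eqref{eq:magique-U}. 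Your approach instead differentiates \eqref{eq:coer-daU} once in $a$ to obtain the \emph{exact} relation $\vD^2E(\bs U_\lambda^a)\partial_a\bs U_\lambda^a=\frac{\nu}{\lambda}J\partial_a\bs U_\lambda^a+\frac{\nu a}{\lambda}J\partial_a^2\bs U_\lambda^a$, which shows that $\beta_\lambda^a$ is an approximate eigen-covector of the linearization around $\bs U_\lambda^a$ itself, with the defect recombining perfectly with the modulation error $(a'+\frac{\nu a}{\lambda})$. Together with the two exact vanishings $\la\beta_\lambda^a,\Lambda_\cE\bs U_\lambda^a\ra=\la\beta_\lambda^a,\partial_a\bs U_\lambda^a\ra=0$ (which are equivalent to \eqref{eq:Ua-crit}), this eliminates the case split entirely. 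Your argument is cleaner and makes the algebraic reason for introducing $\beta_\lambda^a$ more transparent; the paper's split, on the other hand, is organized so as to transfer directly to the two-bubble setting of Proposition~\ref{prop:bulles-dtb}, where the exact eigen-structure is perturbed by the second bubble and the case distinction reappears naturally.
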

\begin{proof}~
  \paragraph{\textbf{Step 1.}}
  We check that
  \begin{align}
    |\la \beta_\lambda^a - \alpha_\lambda^+, \bs g\ra| &\lesssim |a|\cdot\|\bs g\|_\cE, \label{eq:beta-alpha-1} \\
    |\la \beta_\lambda^a - \alpha_\lambda^+, \partial_t \bs g\ra| &\lesssim \frac{1}{\lambda}|a|\cdot \|\bs g\|_\cE, \label{eq:beta-alpha-2} \\
    |\la \partial_a \beta_\lambda^a, \bs g\ra| + |\lambda\partial_\lambda\beta_\lambda^a, \bs g\ra| &\lesssim \|\bs g\|_\cE. \label{eq:deriv-beta}
  \end{align}
From Proposition~\ref{prop:Ua} we have $\|\beta_1^a - \alpha^+\|_{Y^{k}\times Y^{k+1}} \lesssim |a|$,
and \eqref{eq:beta-alpha-1} follows by rescaling.

Similarly one obtains
\begin{align}
  |\la \beta_\lambda^a - \alpha_\lambda^+, J\circ(\vD E(\bs U_\lambda^a + \bs g) - \vD E(\bs U_\lambda^a))\ra| &\lesssim \frac{1}{\lambda}|a|\cdot\|\bs g\|_\cE, \label{eq:coer-beta-alpha-4} \\
  |\la \beta_\lambda^a - \alpha_\lambda^+, \Lambda_\cE\bs U_\lambda^a\ra| +|\la \beta_\lambda^a - \alpha_\lambda^+, \partial_a\bs U_\lambda^a\ra|&\lesssim |a| \label{eq:coer-beta-alpha-5},
\end{align}
hence \eqref{eq:beta-alpha-2} follows from \eqref{eq:coer-mod-l} and \eqref{eq:coer-mod-a}.

Note that \eqref{eq:beta-alpha-1} implies in particular that $|\la \beta_\lambda^a, \bs g\ra| \lesssim \|\bs g\|_\cE$ with a universal constant.

\paragraph{\textbf{Step 2.}}
Consider the case
\begin{equation}
  \label{eq:coer-a-leq-g}
|a(t)| \leq \|\bs g(t)\|_\cE.
\end{equation}
We have
\begin{equation}
  \label{eq:coer-dtb}
  \dd t b(t) = \la \beta_{\lambda(t)}^{a(t)}, \partial_t \bs g(t)\ra + \lambda'(t)\la \partial_\lambda \beta_{\lambda(t)}^{a(t)}, \bs g(t)\ra + a'(t)\la \partial_a \beta_{\lambda(t)}^{a(t)}, \bs g\ra.
\end{equation}
From Lemma~\ref{lem:coer-mod} we know that $|\lambda'| \lesssim \|\bs g\|_\cE$ and $|a'| \lesssim \frac{1}{\lambda}\|\bs g\|_\cE$. Hence from \eqref{eq:deriv-beta}
it follows that the last two terms of \eqref{eq:coer-dtb} are negligible.

Using \eqref{eq:coer-a-leq-g}, \eqref{eq:beta-alpha-1} and \eqref{eq:beta-alpha-2} we see that it is sufficient to show that
\begin{equation}
  \label{eq:coer-dtalpha}
  \big|\la \alpha_\lambda^+, \partial_t \bs g\ra - \frac{\nu}{\lambda}\la \alpha_\lambda^+, \bs g\ra\big| = |\la \alpha_\lambda^+, \partial_t \bs g - J\circ\vD^2 E(\bs W_\lambda)\bs g\ra| \lesssim \frac{1}{\lambda}\|\bs g\|^2.
\end{equation}
This follows easily from \eqref{eq:coer-mod-g}. Indeed, from Lemma~\ref{lem:weak-linea} we deduce that
$$
|\la \alpha_\lambda^+, J\circ(\vD E(\bs U^a_\lambda + \bs g) - \vD E(\bs U^a_\lambda) - \vD^2 E(\bs W_\lambda)\bs g)\ra| \lesssim \frac{1}{\lambda}\|\bs g\|_\cE^2.
$$
To see that the contribution of the last two terms in \eqref{eq:coer-mod-g} is negligible, it suffices to use \eqref{eq:coer-mod-l}, \eqref{eq:coer-mod-a},
$|\la \alpha_\lambda^+, \Lambda_\cE \bs U_\lambda^a\ra| \lesssim |a|$ and $|\la \alpha_\lambda^+, \partial_a \bs U_\lambda^a\ra| \lesssim 1$.

\paragraph{\textbf{Step 3.}}
Now consider the case
\begin{equation}
  \label{eq:coer-g-leq-a}
  \|\bs g(t)\|_\cE \leq |a(t)|.
\end{equation}
We can assume that $a \neq 0$ (otherwise $\bs u(t) \equiv \bs W_\lambda$ and the conclusion is obvious).

Using Proposition~\ref{prop:Ua} we get
\begin{equation}
  \label{eq:coer-beta-form-2}
\beta_\lambda^a = -\frac{1}{2a}\vD E(\bs U_\lambda^a) \quad \Rightarrow \quad b(t) = -\frac{1}{2a(t)}\cdot \la \vD E(\bs U_{\lambda(t)}^{a(t)}), \bs g(t)\ra.
\end{equation}
The idea of the proof is that the first factor grows exponentially, while the second does not change much.
From \eqref{eq:coer-mod-a} and \eqref{eq:coer-g-leq-a} we obtain $\big|\frac{a'(t)}{a(t)} + \frac{\nu}{\lambda(t)}\big| \lesssim \frac{1}{\lambda(t)}\|\bs g(t)\|$, hence
\begin{equation*}
  \begin{aligned}
  \dd t b(t) &= -\frac{a'(t)}{a(t)}b(t) -\frac{1}{2a(t)}\dd t\big\la\vD E(\bs U_{\lambda(t)}^{a(t)}),\bs g(t)\big\ra \\ 
  &= \frac{\nu}{\lambda(t)}b(t) -\frac{1}{2a(t)}\dd t\big\la \vD E(\bs U_{\lambda(t)}^{a(t)}),\bs g(t)\big\ra + \frac{1}{\lambda(t)}O(\|\bs g(t)\|_\cE^2).
\end{aligned}
  \end{equation*}
  We compute the second term using \eqref{eq:coer-mod-g}:
  \begin{equation*}
    \begin{aligned}
    \dd t\la \vD E(\bs U_\lambda^a), \bs g\ra  &= \la \vD^2 E(\bs U_\lambda^a)\partial_t \bs U_\lambda^a, \bs g\ra  \\ &
    + \la \vD E(\bs U_\lambda^a), J\circ(\vD E(\bs U_\lambda^a + \bs g) - \vD E(\bs U_\lambda^a)) + \lambda'\cdot \frac{1}{\lambda}\Lambda_\cE\bs U_\lambda^a - \Big(a'+\frac{\nu a}{\lambda}\Big)\partial_a \bs U_\lambda^a\ra.
  \end{aligned}
  \end{equation*}
  Observe that
  \begin{equation}
    \label{eq:Ua-crit}
    \begin{aligned}
  \la \vD E(\bs U_\lambda^a), \Lambda_\cE \bs U_\lambda^a) = -\partial_\lambda E(\bs U_\lambda^a) = 0, \\
  \la \vD E(\bs U_\lambda^a), \partial_a \bs U_\lambda^a) = \partial_a E(\bs U_\lambda^a) = 0.
\end{aligned}
\end{equation}
Since $\vD E(\bs U_\lambda^a) \in Y^k\times Y^{k+1}$ by Proposition~\ref{prop:Ua}, Lemma~\ref{lem:weak-linea} implies that
$$
\big|\big\la \vD E(\bs U_\lambda^a), J\circ\big(\vD E(\bs U_\lambda^a + \bs g) - \vD E(\bs U_\lambda^a) - \vD^2 E(\bs U_\lambda^a)\bs g\big)\big\ra\big| \lesssim \frac{a}{\lambda}\|\bs g\|_\cE^2,
$$
hence using self-adjointness of $\vD^2 E(\bs U_\lambda^a)$ and anti-self-adjointness of $J$ we get
\begin{equation*}
 \dd t\la \vD E(\bs U_\lambda^a), \bs g\ra = \la \vD^2 E(\bs U_\lambda^a)\big(\partial_t \bs U_\lambda^a - J\circ\vD E(\bs U_\lambda^a)\big), \bs g\ra + \frac{a}{\lambda}O(\|\bs g\|_\cE^2).
\end{equation*}
The following estimates hold:
  \begin{equation}
    \label{eq:magique-U}
    \begin{aligned}
      \|\vD^2 E(\bs U_\lambda^a)\Lambda_\cE\bs U_\lambda^a\|_{\cE^*}&\lesssim |a|, \\
      \|\vD^2 E(\bs U_\lambda^a)\partial_a \bs U_{\lambda}^a\|_{\cE^*} &\lesssim 1
    \end{aligned}
  \end{equation}
  (the first one follows from $\vD^2 E(\bs W_\lambda)\Lambda_\cE\bs W_\lambda = 0$).
  Using \eqref{eq:deriv-t-U} and \eqref{eq:magique-U} together with \eqref{eq:coer-mod-l} and \eqref{eq:coer-mod-a} concludes the proof.
\end{proof}

%
%
As an application of the preceding proposition, we now show that the stable manifold $\bs U_\lambda^a$ is the only source of the lack of coercivity of the energy functional restricted to the trajectories staying close to the family of stationary states.

Given $\bs u_0 \in \cE$, let $\bs u(t): [0, T_+) \to \cE$ denote the maximal solution of \eqref{eq:nlw} with initial data $\bs u(0) = \bs u_0$.
For $\eta > 0$ sufficiently small we define the \emph{centre-stable set} $\cM_\tx{cs}$ as
\begin{equation*}
    \cM_\tx{cs} := \big\{\bs u_0: \sup_{0 \leq t < T_+}\inf_{\lambda > 0}\|\bs u(t) - \bs W_\lambda\|_\cE \leq \eta\big\}.
\end{equation*}
\begin{remark}
In the case $N = 3$ it was proved by Krieger, Nakanishi and Schlag \cite{KrNaSc15}
that $\cM_\tx{cs}$ is a local $C^1$ manifold tangent at $\bs u_0 = \bs W$ to $\cX_\tx{cs}$.
\end{remark}
\begin{remark}
  \label{rem:coer-dtb}
  It is not difficult to see that if $\cM_\tx{cs}$ is a regular hypersurface,
  then necessarily its tangent space at $\bs U_\lambda^a$ is given by
  $$
  \bs U_\lambda^a + \ker \beta_\lambda^a = \{\bs U_\lambda^a + \bs g: \la \beta_\lambda^a, \bs g\ra = 0\}.
  $$
  Hence $b(t)$ is a natural candidate to measure how a trajectory moves away from $\cM_\tx{cs}$.
\end{remark}

\begin{corollary}
  \label{cor:coer-coercivity}
    If $\eta > 0$ is small enough, then there exists a constant $C_E > 0$ such that
  \begin{equation}
    \label{eq:coercivity}
    \bs u_0 \in \cM_\tx{cs}\quad\Rightarrow\quad \inf_{\lambda > 0, a \in \bR}\|\bs u_0 - \bs U_\lambda^a\|_\cE^2 \leq C_E\big(E(\bs u_0) - E(\bs W)\big).
  \end{equation}
\end{corollary}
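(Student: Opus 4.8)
The plan is to argue by contradiction using the machinery built in this section, in particular Proposition~\ref{prop:coer-dtb} and the Taylor expansion of $E$ around $\bs U_\lambda^a$. Suppose $\bs u_0 \in \cM_\tx{cs}$ and the conclusion fails, i.e. for every constant $C_E$ there is such $\bs u_0$ with $\inf_{\lambda, a}\|\bs u_0 - \bs U_\lambda^a\|_\cE^2 > C_E(E(\bs u_0) - E(\bs W))$. Since $\bs u(t)$ stays in an $\eta$-neighbourhood of the family $\bs W_\lambda$ for all $t \in [0, T_+)$, Lemma~\ref{lem:coer-mod} applies on the whole interval and provides modulation parameters $\lambda(t)$, $a(t)$ and the error $\bs g(t) = \bs u(t) - \bs U_{\lambda(t)}^{a(t)}$ satisfying the orthogonality conditions \eqref{eq:coer-gorth} and the bounds \eqref{eq:coer-gbound}--\eqref{eq:coer-mod-a}. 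Write $b(t) := \la \beta_{\lambda(t)}^{a(t)}, \bs g(t)\ra$ as in Proposition~\ref{prop:coer-dtb}.

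The first main step is to establish the \emph{coercivity alternative} at the initial time. From the Taylor formula,
\begin{equation*}
  E(\bs u(t)) = E(\bs U_{\lambda}^{a}) + \la \vD E(\bs U_{\lambda}^{a}), \bs g\ra + \tfrac 12\la \vD^2 E(\bs U_{\lambda}^{a})\bs g, \bs g\ra + o(\|\bs g\|_\cE^2),
\end{equation*}
and since $E(\bs U_\lambda^a) = E(\bs W)$ (scaling invariance plus the fact that $\bs U^a$ is a solution orbit, or directly from \eqref{eq:Ua-crit} and $E(\bs U^0) = E(\bs W)$), the zeroth-order term is $E(\bs W)$. For the linear term, $\vD E(\bs U_\lambda^a) = -\tfrac{\lambda}{\nu a}\, J^{-1}\partial_a \bs U_\lambda^a = 2a\, J^{-1}\beta_\lambda^a$ up to constants from \eqref{eq:coer-daU} and \eqref{eq:beta}; more precisely $\la \vD E(\bs U_\lambda^a), \bs g\ra = -2a\,\la \beta_\lambda^a, \bs g\ra = -2a\, b(t)$ (using $\vD E(\bs U_\lambda^a) = -2a\cdot(-\tfrac{\nu}{2\lambda})^{-1}\cdot\tfrac{\nu}{2\lambda}\ldots$; the clean identity is \eqref{eq:coer-beta-form-2}: $\beta_\lambda^a = -\tfrac{1}{2a}\vD E(\bs U_\lambda^a)$ when $a \neq 0$). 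For the quadratic term, Lemma~\ref{lem:coer} combined with the orthogonality $\la \cZ_{\uln\lambda}, g\ra = 0$ gives $\tfrac 12\la \vD^2 E(\bs U_\lambda^a)\bs g, \bs g\ra + 2(\la\alpha_\lambda^-, \bs g\ra^2 + \la\alpha_\lambda^+, \bs g\ra^2) \gtrsim \|\bs g\|_\cE^2$; here $\la\alpha_\lambda^-, \bs g\ra = 0$ by \eqref{eq:coer-gorth}, and $\la\alpha_\lambda^+, \bs g\ra = b(t) + O(|a|\,\|\bs g\|_\cE)$ by \eqref{eq:beta-alpha-1}. Putting this together at $t = 0$,
\begin{equation*}
  E(\bs u_0) - E(\bs W) \gtrsim \|\bs g(0)\|_\cE^2 - C\, b(0)^2 - C|a(0)|\,b(0)\,\|\bs g(0)\|_\cE + o(\|\bs g(0)\|_\cE^2).
\end{equation*}

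The crux is therefore to show $|b(0)| \lesssim \|\bs g(0)\|_\cE^2$ for solutions in $\cM_\tx{cs}$; once this holds, all the error terms above are $o(\|\bs g(0)\|_\cE^2)$ and we get $E(\bs u_0) - E(\bs W) \gtrsim \|\bs g(0)\|_\cE^2 \gtrsim \inf_{\lambda, a}\|\bs u_0 - \bs U_\lambda^a\|_\cE^2$, which is exactly \eqref{eq:coercivity}. This is the \emph{destabilization} mechanism and is where Proposition~\ref{prop:coer-dtb} does the work: if instead $|b(0)| \gg \|\bs g(0)\|_\cE^2$ (and in particular $b(0) \neq 0$), then the differential inequality $|\tfrac{\vd}{\vd t}b(t) - \tfrac{\nu}{\lambda(t)}b(t)| \lesssim \tfrac{1}{\lambda(t)}\|\bs g(t)\|_\cE^2$ forces $|b(t)|$ to grow. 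Quantitatively, as long as $\|\bs g(t)\|_\cE^2 \ll |b(t)|$ one has $\tfrac{\vd}{\vd t}\log|b(t)| \geq \tfrac{\nu}{2\lambda(t)} > 0$; one checks (using \eqref{eq:beta-alpha-1}, $|b(t)| \lesssim \|\bs g(t)\|_\cE$, and the fact that $\|\bs g(t)\|_\cE \leq \eta$ can be kept small) that this regime is self-sustaining, so $|b(t)|$ increases monotonically until $|b(t)| \sim \eta$, contradicting $|\la\beta_\lambda^a, \bs g\ra| \lesssim \|\bs g\|_\cE \lesssim \eta$ with the universal constant from Step~1 of Proposition~\ref{prop:coer-dtb} once $\eta$ is chosen small enough relative to that constant; alternatively it forces $\bs u(t)$ to leave the $\eta$-neighbourhood, contradicting $\bs u_0 \in \cM_\tx{cs}$. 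Hence the destabilization case cannot occur, we are in the coercivity case $|b(0)| \lesssim \|\bs g(0)\|_\cE^2$, and the corollary follows. The main obstacle is making the bootstrap in the destabilization case airtight: one must track simultaneously that $\|\bs g(t)\|_\cE$ does not blow up while $|b(t)|$ grows — but since $b(t)$ is controlled by $\|\bs g(t)\|_\cE$ from above and $\|\bs g(t)\|_\cE \leq \eta$, the exponential growth of $b$ is capped, and a continuity/maximality argument closes the loop. A minor subtlety is the borderline case $a(0) = 0$ with $\bs g(0) \neq 0$, handled by noting $\beta_\lambda^0 = \alpha_\lambda^+$ and running the same argument with $\alpha_\lambda^+$ in place of $\beta_\lambda^a$.
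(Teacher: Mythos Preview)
Your overall strategy matches the paper's: expand $E$ around $\bs U_\lambda^a$, use Lemma~\ref{lem:coer} for the quadratic term, and show that the unstable coefficient $b(0)$ must be small for solutions in $\cM_\tx{cs}$ via the differential inequality of Proposition~\ref{prop:coer-dtb}. The Taylor/coercivity part (your first displayed estimate and the identification $\la \vD E(\bs U_\lambda^a), \bs g\ra = -2a\,b$) is correct and essentially identical to the paper's Step~1.

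The gap is in the bootstrap. You assert that the regime $\|\bs g(t)\|_\cE^2 \ll |b(t)|$ is ``self-sustaining'', citing only $|b(t)| \lesssim \|\bs g(t)\|_\cE \leq \eta$. But these facts bound $|b|$ from \emph{above}; they do not prevent $\|\bs g(t)\|_\cE^2$ from catching up with $|b(t)|$ and breaking the bootstrap. The paper closes this by comparing $|b(t)|$ not to the time-dependent quantity $\|\bs g(t)\|_\cE^2$ but to the \emph{constant} $E(\bs u_0)-E(\bs W)$: from the coercivity identity
\[
E(\bs u_0)-E(\bs W) + 2a(t)b(t) + 2b(t)^2 \sim \|\bs g(t)\|_\cE^2
\]
(which holds for all $t$ by energy conservation) one sees that $|b(t)| \geq C_2\big(E(\bs u_0)-E(\bs W)\big)$ automatically forces $|b(t)| \geq C_1\|\bs g(t)\|_\cE^2$, since $|a(t)|, |b(t)|$ are small. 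The bootstrap threshold is then a constant, $|b(t)|$ is strictly increasing above it, and the continuity argument is clean. Your proposal never invokes energy conservation at times $t>0$, and without it the ``self-sustaining'' claim does not follow from the ingredients you list.

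A second, smaller omission: to conclude that monotone growth of $|b(t)|$ leads to a contradiction you need $\int_0^{T_+}\frac{\vd t}{\lambda(t)} = +\infty$. The paper proves this separately (bounded $\log\lambda$ gives $T_+=+\infty$ by the Cauchy theory; unbounded $\log\lambda$ gives divergence via $|\lambda'|\lesssim\|\bs g\|_\cE$). Without it, exponential growth in the variable $\int \vd t/\lambda$ need not force $|b(t)|$ past any fixed threshold.
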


\begin{proof}~
  \paragraph{\textbf{Step 1 -- Coercivity.}} We will prove that if $\|\bs g\|_\cE$ is small enough and $\la \cZ_{\uln\lambda}, g\ra = \la \alpha_\lambda^-, \bs g\ra = 0$, then
  \begin{equation}
    \label{eq:coer-coer}
    E(\bs U_\lambda^a + \bs g) - E(\bs W) + 2a\la\beta_\lambda^a, \bs g\ra + 2|\la\beta_\lambda^a, \bs g\ra|^2 \sim \|\bs g\|_\cE^2.
  \end{equation}
  We have $2a \la \beta_\lambda^a, \bs g\ra = \la \vD E(\bs U_\lambda^a), \bs g\ra$, hence Lemma~\ref{lem:taylor} implies
  \begin{equation*}
    \begin{aligned}
    E(\bs U_\lambda^a + \bs g) &= E(\bs U_\lambda^a) + \la \vD E(\bs U_\lambda^a), \bs g\ra + \frac 12\la \vD^2 E(\bs U_\lambda^a)\bs g, \bs g\ra + o(\|\bs g\|_\cE^2) \\
    &=E(\bs W) - 2a\la\beta_\lambda^a, \bs g\ra + \frac 12\la \vD^2 E(\bs U_\lambda^a)\bs g, \bs g\ra + o(\|\bs g\|_\cE^2).
  \end{aligned}
  \end{equation*}
  By \eqref{eq:beta-alpha-1} we have $|\la \beta_\lambda^a, \bs g\ra^2 - \la \alpha_\lambda^+, \bs g\ra^2| \lesssim |a|\cdot\|\bs g\|^2$,
  hence Lemma~\ref{lem:coer} yields
\begin{equation*}
  \frac 12 \la \vD^2 E(\bs U_\lambda^a)\bs g, \bs g\ra + 2|\la\beta_\lambda^a, \bs g\ra|^2 \sim \|\bs g\|_\cE^2,
\end{equation*}
which implies \eqref{eq:coer-coer}.

  \paragraph{\textbf{Step 2 -- Differential inequalities.}}
  Let $\bs g(t)$, $\lambda(t)$ and $a(t)$ be given by Lemma~\ref{lem:coer-mod}. Observe that
\begin{equation}
  \label{eq:lambda-diverge}
  \int_{0}^{T_+}\frac{1}{\lambda(t)}\ud t = +\infty.
\end{equation}
Indeed, if $|\log \lambda(t)|$ is unbounded, then
\begin{equation*}
  \int_{0}^{T_+}\frac{1}{\lambda(t)}\ud t \gtrsim \int_{0}^{T_+}\frac{\|\bs g(t)\|_{\cE}}{\lambda(t)}\ud t
\gtrsim\int_{0}^{T_+}\frac{|\lambda'(t)|}{\lambda(t)}\ud t = +\infty.
\end{equation*}
If $|\log\lambda(t)|$ is bounded, then by the Cauchy theory $T_+ = +\infty$ and \eqref{eq:lambda-diverge} follows.

    From Proposition~\ref{prop:coer-dtb} it follows that there exists a constant $C_1$ such that
  \begin{equation}
    \label{eq:coer-b-destab}
    |b(t)| \geq C_1 \|\bs g(t)\|_\cE^2 \quad\Rightarrow\quad \dd t|b(t)| \geq \frac{\nu}{2\lambda(t)}|b(t)|,\qquad \forall t\in [0, T_+).
  \end{equation}
  We will show that there exists a constant $C_2$ such that
  \begin{equation}
    \label{eq:coer-bbound-boot}
    |b(t)| \geq C_2 \big(E(\bs u_0) - E(\bs W)\big)\quad \Rightarrow\quad |b(t)| \geq C_1 \|\bs g(t)\|_\cE^2.
  \end{equation}
  Indeed, we can rewrite \eqref{eq:coer-coer} as
  \begin{equation}
    \label{eq:coer-coer-2}
    E(\bs u_0) - E(\bs W) + 2a(t)b(t) + 2b(t)^2 \sim \|\bs g\|_\cE^2,
  \end{equation}
  hence if $|b(t)| \geq C_2$, then
  \begin{equation*}
    |b(t)|\cdot\big(\frac{1}{C_2} + 2|a(t)| + 2|b(t)|\big) \geq E(\bs u_0) - E(\bs W) + 2a(t)b(t) + 2b(t)^2 \gtrsim \|\bs g\|_\cE^2,
  \end{equation*}
  which implies \eqref{eq:coer-bbound-boot} since $|a(t)|$ and $|b(t)|$ are small.
  
  Suppose for the sake of contradiction that $b(0) \neq 0$ and $|b(0)| \geq 2C_2 \big(E(\bs u_0) - E(\bs W)\big)$.
  Let $t_1 \leq T_+$ be maximal such that
  \begin{equation}
    \label{eq:coer-b-big}
    b(t) \neq 0,\ |b(t)| \geq C_2 \big(E(\bs u_0) - E(\bs W)\big),\qquad \forall t \in [0, t_1).
  \end{equation}
  Of course $t_1 > 0$. Suppose that $t_1 < T_+$.
    But \eqref{eq:coer-bbound-boot} and \eqref{eq:coer-b-destab} imply that $\dd t |b(t)| > 0$ for $t \in [0, t_1]$.
    In particular, \eqref{eq:coer-b-big} cannot break down at $t = t_1$.
Thus $t_1 = T_+$ and \eqref{eq:coer-bbound-boot} implies that for $t \in [0, T_+)$ there holds $|b(t)| \geq C_1 \|\bs g\|_\cE$.
      By \eqref{eq:coer-b-destab} and \eqref{eq:lambda-diverge}, this would imply $|\beta(t)|\xrightarrow[t\to T_+]{}+\infty$, which is absurd.

      As a result, $|b(0)| \leq 2C_2 \big(E(\bs u_0) - E(\bs W)\big)$. Since $|a(0)|$ and $\|\bs g(0)\|_\cE$ may be assumed as small as we wish, the conclusion follows from \eqref{eq:coer-coer-2} applied at $t = 0$.
    \end{proof}
    \begin{remark}
    It follows quite easily from Lemma~\ref{lem:coer} that
\begin{align}
  \label{eq:lin-coer-1}
  \bs g \in \cX_\tx{c} &\Rightarrow\quad \exists b\in\bR\ :\ \|\bs g - b\Lambda_\cE \bs W\|_\cE^2 \lesssim \frac 12\la\vD^2 E(\bs W)\bs g, \bs g\ra, \\
  \label{eq:lin-coer-2}
  \bs g \in \cX_\tx{cs}&\Rightarrow\quad \exists a, b\in \bR\ :\ \|\bs g - b\Lambda_\cE \bs W - a\ym\|_\cE^2 \lesssim \frac 12\la\vD^2 E(\bs W)\bs g, \bs g\ra.
\end{align}
Corollary~\ref{cor:coer-coercivity} provides a nonlinear version of \eqref{eq:lin-coer-2}.
By similar methods (analyzing just the \emph{linear} stability and instability components $\alpha_\lambda^+$ and $\alpha_\lambda^-$)
one can prove a nonlinear analogue of \eqref{eq:lin-coer-1}, that is
  \begin{equation}
    \label{eq:coercivity-c}
    \bs u_0 \in \cM_\tx{c}\quad\Rightarrow\quad \inf_{\lambda > 0}\|\bs u_0 - \bs W_\lambda\|_\cE^2 \leq C_E\big(E(\bs u_0) - E(\bs W)\big),
  \end{equation}
where
\begin{equation*}
    \cM_\tx{c} := \cM_\tx{cs} \cap \cM_\tx{cu} = \big\{\bs u_0: \sup_{T_- < t < T_+}\inf_{\lambda > 0}\|\bs u(t) - \bs W_\lambda\|_\cE \lambda \eta\big\}.
  \end{equation*}
\end{remark}

\section{Nonexistence of pure two-bubbles with opposite signs}
\label{sec:bulles}
\subsection{Modulation near the sum of two bubbles}
\label{ssec:bulles-mod}
Because of a slow decay of $W$, we will introduce compactly supported approximations of $W_\lambda$.
Let $R > 0$ be a large constant to be chosen later.

We denote
\begin{equation}
  \label{eq:bulles-V}
  V_R(\lambda_1, \lambda_2)(x) :=
  \begin{cases}
    W_{\lambda_1}(x) - \zeta(\lambda_1, \lambda_2)\qquad &\text{when }|x| \leq R\sqrt{\lambda_1\lambda_2}, \\
    0 \qquad &\text{when }|x| \geq R\sqrt{\lambda_1\lambda_2},
  \end{cases}
\end{equation}
where
\begin{equation}
  \label{eq:bulles-zeta}
  \zeta(\lambda_1, \lambda_2) := W_{\lambda_1}(R\sqrt{\lambda_1\lambda_2}) = \frac{1}{{\lambda_1}^{\frac{N-2}{2}}}\Bigl(1+\frac{R^2\lambda_2}{N(N-2)\lambda_1}\Bigr)^{-\frac{N-2}{2}}
  = \Bigl(\lambda_1+\frac{R^2\lambda_2}{N(N-2)}\Big)^{-\frac{N-2}{2}}.
\end{equation}
We have $\zeta(\lambda_1, \lambda_2) \sim R^{-(N-2)}\lambda_2^{-\frac{N-2}{2}}$, $\partial_{\lambda_1}\zeta(\lambda_1, \lambda_2) \sim R^{-N}\lambda_2^{-\frac{N}{2}}$
and $\partial_{\lambda_2}\zeta(\lambda_1, \lambda_2) \sim R^{-N}\lambda_2^{-\frac{N}{2}}$.

We will also denote
\begin{equation*}
  \bs V_R(\lambda_1, \lambda_2) := (V_R(\lambda_1, \lambda_2), 0)\in \cE.
\end{equation*}

It is straightforward to check that $V_R(\lambda_1, \lambda_2)$ has weak derivatives $\partial_{\lambda_1}V_R(\lambda_1, \lambda_2)$ and $\partial_{\lambda_2}V_R(\lambda_1, \lambda_2)$,
which are given by the formulas:
\begin{align}
  \label{eq:bulles-Vl1}
  \partial_{\lambda_1} V_R(\lambda_1, \lambda_2)(x) &=
  \begin{cases}
    -(\Lambda W)_{\uln{\lambda_1}}(x) - \partial_{\lambda_1}\zeta(\lambda_1, \lambda_2)\qquad &\text{when }|x| < R\sqrt{\lambda_1\lambda_2}, \\
    0 \qquad &\text{when }|x| > R\sqrt{\lambda_1\lambda_2},
  \end{cases} \\
  \label{eq:bulles-Vl2}
  \partial_{\lambda_2} V_R(\lambda_1, \lambda_2)(x) &=
  \begin{cases}
    -\partial_{\lambda_2}\zeta(\lambda_1, \lambda_2)\qquad &\text{when }|x| < R\sqrt{\lambda_1\lambda_2}, \\
    0 \qquad &\text{when }|x| > R\sqrt{\lambda_1\lambda_2}.
  \end{cases}
\end{align}
Notice that $\partial_{\lambda_j} V_R(\lambda_1, \lambda_2) \in L^2$ and $\partial_{\lambda_j} V_R(\lambda_1, \lambda_2) \notin \dot H^1$.

In the whole section we will denote $\lambda := \frac{\lambda_1}{\lambda_2}$ and $\cN(\bs g, \lambda) := \sqrt{\|\bs g\|_\cE^2 + \lambda^\frac{N-2}{2}}$.
\begin{lemma}
  \label{lem:bulles-prop-V}
  For $R \gg 1$ and $\lambda \ll 1$ the following estimates are true with constants depending only on the dimension:
  \begin{align}
    \|V_R(\lambda_1, \lambda_2) - W_{\lambda_1}\|_{\dot H^1} &\lesssim R^{-\frac{N-2}{2}}\lambda^{\frac{N-2}{4}}, \label{eq:db-V-W} \\
    \|V_R(\lambda_1, \lambda_2) - W_{\lambda_1}\|_{L^\infty} &\lesssim R^{-N+2}\lambda_2^{-\frac{N-2}{2}}, \label{eq:db-V-Linf} \\
    \|\partial_{\lambda_1} V_R(\lambda_1, \lambda_2) + \Lambda W_\uln{\lambda_1}\|_{L^\infty(|x| < R\sqrt{\lambda_1\lambda_2})} &\lesssim R^{-N}\lambda_2^{-\frac{N}{2}}, \label{eq:db-lV-Linf} \\
    \|V_R(\lambda_1, \lambda_2)\|_{L^1} &\lesssim R^2 \lambda_2^\frac{N+2}{2}\lambda^\frac N2, \label{eq:db-V-L1} \\
    \|\partial_{\lambda_1} V_R(\lambda_1, \lambda_2)\|_{L^1} &\lesssim R^2 \lambda_2^\frac N2 \lambda^\frac{N-2}{2}. \label{eq:db-lV-L1}
  \end{align}
\end{lemma}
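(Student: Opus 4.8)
The plan is to verify all five bounds by direct computation from the explicit formulas \eqref{eq:bulles-V} and \eqref{eq:bulles-Vl1}. Set $\rho := R\sqrt{\lambda_1\lambda_2}$ for the cut-off radius; the change of variables $x = \lambda_1 y$ maps $\{|x| < \rho\}$ onto $\{|y| < M\}$ with $M := \rho/\lambda_1 = R\sqrt{\lambda_2/\lambda_1} = R\lambda^{-1/2}$, and $M \gg 1$ because $R \gg 1$ and $\lambda \ll 1$. The only information about $W$ that enters is its decay at infinity, namely $W(y)\sim|y|^{2-N}$, $|\grad W(y)|\lesssim|y|^{1-N}$, and $|\Lambda W(y)|\lesssim(1+|y|)^{2-N}$; the last follows from the identity $\Lambda W = (\tfrac N2-1)\bigl(1-\tfrac{|y|^2}{N(N-2)}\bigr)\bigl(1+\tfrac{|y|^2}{N(N-2)}\bigr)^{-N/2}$, obtained by a one-line computation. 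I will freely use the stated asymptotics $\zeta \sim R^{2-N}\lambda_2^{-(N-2)/2}$ and $\partial_{\lambda_1}\zeta \sim R^{-N}\lambda_2^{-N/2}$, together with the identity $\lambda_1 = \lambda\lambda_2$ to convert powers of $\lambda_1$ into powers of $\lambda$ and $\lambda_2$.

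For \eqref{eq:db-V-W} and \eqref{eq:db-V-Linf}, observe from \eqref{eq:bulles-V} that $V_R(\lambda_1,\lambda_2) - W_{\lambda_1}$ equals $-\zeta$ on $\{|x|<\rho\}$ and $-W_{\lambda_1}$ on $\{|x|>\rho\}$; since $W_{\lambda_1}(\rho)=\zeta$ this function is \emph{continuous} across $\{|x|=\rho\}$, so its distributional gradient has no singular part and equals $-\grad W_{\lambda_1}\mathbf{1}_{\{|x|>\rho\}}$. Hence $\|V_R-W_{\lambda_1}\|_{\dot H^1}^2 = \int_{|x|>\rho}|\grad W_{\lambda_1}|^2\ud x = \int_{|y|>M}|\grad W(y)|^2\ud y \lesssim M^{2-N} = R^{2-N}\lambda^{(N-2)/2}$, which is \eqref{eq:db-V-W}, while monotonicity of $W$ gives $\|V_R-W_{\lambda_1}\|_{L^\infty}\le\zeta\lesssim R^{2-N}\lambda_2^{-(N-2)/2}$, which is \eqref{eq:db-V-Linf}. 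Bound \eqref{eq:db-lV-Linf} is immediate from \eqref{eq:bulles-Vl1}: on $\{|x|<\rho\}$ one has $\partial_{\lambda_1}V_R + (\Lambda W)_{\uln{\lambda_1}} = -\partial_{\lambda_1}\zeta$, a constant, whose modulus is $\sim R^{-N}\lambda_2^{-N/2}$.

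For the $L^1$ bounds I split each integral over $\{|x|<\rho\}$ into the contribution of the genuine profile and that of the subtracted constant. Rescaling gives $\int_{|x|<\rho}W_{\lambda_1}\ud x = \lambda_1^{(N+2)/2}\int_{|y|<M}W(y)\ud y\sim\lambda_1^{(N+2)/2}M^2 = R^2\lambda_1^{N/2}\lambda_2 = R^2\lambda^{N/2}\lambda_2^{(N+2)/2}$, using $\int_{|y|<M}|y|^{2-N}\ud y\sim M^2$ for $M\gg1$ and $N\ge3$; and $\zeta\,|B_\rho|\sim R^{2-N}\lambda_2^{-(N-2)/2}R^N(\lambda_1\lambda_2)^{N/2} = R^2\lambda^{N/2}\lambda_2^{(N+2)/2}$ is of the same order, whence \eqref{eq:db-V-L1}. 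Likewise $\int_{|x|<\rho}|(\Lambda W)_{\uln{\lambda_1}}|\ud x = \lambda_1^{N/2}\int_{|y|<M}|\Lambda W(y)|\ud y\sim\lambda_1^{N/2}M^2 = R^2\lambda^{(N-2)/2}\lambda_2^{N/2}$, while $|\partial_{\lambda_1}\zeta|\,|B_\rho|\sim R^{-N}\lambda_2^{-N/2}R^N(\lambda_1\lambda_2)^{N/2} = \lambda_1^{N/2} = \lambda^{N/2}\lambda_2^{N/2}$, which is $\lambda R^{-2}$ times the previous quantity and therefore negligible since $\lambda\ll1$, $R\gg1$; this proves \eqref{eq:db-lV-L1}.

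No step presents a real difficulty — the lemma is essentially bookkeeping with the three length scales $\lambda_1$, $\lambda_2$, $R$. The one point worth isolating is the continuity observation used for \eqref{eq:db-V-W}: individually neither $-\zeta\mathbf{1}_{\{|x|<\rho\}}$ nor $-W_{\lambda_1}\mathbf{1}_{\{|x|>\rho\}}$ lies in $\dot H^1$ (each carries a surface measure on $\{|x|=\rho\}$ in its distributional gradient), but the two singular parts cancel exactly because the truncation level $\zeta(\lambda_1,\lambda_2)$ was chosen equal to $W_{\lambda_1}(\rho)$ — this is precisely why $V_R$ is defined with the subtracted constant rather than as a brutal truncation of $W_{\lambda_1}$.
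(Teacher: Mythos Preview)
Your argument is correct and is precisely the direct computation the paper alludes to (the paper calls the first three estimates ``straightforward'' and refers to \cite{moi15p-2}, and for the $L^1$ bounds gives only the pointwise hint $|V_R(x)|\lesssim\lambda_1^{-(N-2)/2}(|x|/\lambda_1)^{2-N}$ together with the support condition, which is exactly what your rescaling $x=\lambda_1 y$ makes explicit). Your remark on the cancellation of surface measures in \eqref{eq:db-V-W} is a nice clarification of why the truncation is defined with the subtracted constant.
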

\begin{proof}
  The proof of \eqref{eq:db-V-W}, \eqref{eq:db-V-Linf} and \eqref{eq:db-lV-Linf} is straightforward, see \cite[Lemma 2.3]{moi15p-2};
  \eqref{eq:db-V-L1} and \eqref{eq:db-lV-L1} follow from the fact that
  $|V_R(x)| \lesssim \lambda_1^{-\frac{N-2}{2}}\cdot \big(\frac{|x|}{\lambda_1}\big)^{-N +2}$,
  $|\partial_{\lambda_1} V_R(x)| \lesssim \lambda_1^{-\frac N2}\cdot \big(\frac{|x|}{\lambda_1}\big)^{-N +2}$
  and $\supp \big(V(x)\big) = \supp\big(\partial_{\lambda_1} V(x)\big) = B(0, R\sqrt{\lambda_1 \lambda_2})$.
\end{proof}

We will omit the subscript and write $\bs V(\lambda_1, \lambda_2)$ instead of $\bs V_R(\lambda_1, \lambda_2)$. The approximate solution we will consider is defined as follows:
\begin{equation*}
  \bs U(\lambda_1, \lambda_2, a_2) := \bs U_{\lambda_2}^{a_2} - \bs V(\lambda_1, \lambda_2).
\end{equation*}
Observe that
\begin{align}
  \partial_{\lambda_1}\bs U(\lambda_1, \lambda_2, a_2) &= -\partial_{\lambda_1}\bs V(\lambda_1, \lambda_2), \label{eq:bulles-dUl1} \\
  \partial_{\lambda_2}\bs U(\lambda_1, \lambda_2, a_2) &= -\frac{1}{\lambda_2}\Lambda_\cE \bs U_{\lambda_2}^{a_2} -\partial_{\lambda_2}\bs V(\lambda_1, \lambda_2), \label{eq:bulles-dUl2} \\
  \partial_{a_2}\bs U(\lambda_1, \lambda_2, a_2) &= \partial_a \bs U_{\lambda_2}^{a_2} = -\frac{\lambda_2}{\nu a_2}J\circ \vD E(\bs U_{\lambda_2}^{a_2}). \label{eq:bulles-dUa}
\end{align}

\begin{remark}
  \label{rem:implicit}
  The following version of the Implicit Function Theorem has the advantage of providing a lower bound
  on the size of a ball where it can be applied:

  \textit{  Suppose that $X$, $Y$ and $Z$ are Banach spaces, $x_0 \in X$, $y_0 \in Y$, $\rho, \eta > 0$ and $\Phi: B(x_0, \rho) \times B(y_0, \eta) \to Z$
  is continuous in $x$ and continuously differentiable in $y$,
  $\Phi(x_0, y_0) = 0$ and $\vD_y \Phi(x_0, y_0) =: L_0$ has a bounded inverse.
  Suppose that
  \begin{align}
    \|L_0 - \vD_y \Phi(x, y)\|_Z \leq \frac 13 \|L_0^{-1}\|_{\scrL(Z, Y)}^{-1}\qquad &\text{for }\|x-x_0\|_X < \rho, \|y-y_0\|_Y < \eta, \label{eq:implicit-1} \\
    \|\Phi(x, y_0)\|_Z \leq \frac{\eta}{3} \|L_0^{-1}\|_{\scrL(Z, Y)}^{-1}\qquad &\text{for }\|x-x_0\|_X < \rho. \label{eq:implicit-2}
  \end{align}
Then there exists $y\in C(B(x_0, \rho), B(y_0, \eta))$ such that for $x \in B(x_0, \rho)$, $y = y(x)$ is the unique solution of the equation $\Phi(x, y(x)) = 0$ in $B(y_0, \eta)$. } \qed

  The proof is the same as standard proofs of IFT, see for instance \cite[Section 2.2]{chow-hale}.
\end{remark}
\begin{lemma}
  \label{lem:bulles-mod}
  Let $\delta_0 > 0$ and $\lambda_0 > 0$ be sufficiently small. For any $0 \leq \delta \leq \delta_0$ and $0 < \wt \lambda \leq \lambda_0$
  there exists $0 \leq \eta = \eta(\delta, \wt \lambda) \sto{\delta, \wt\lambda \to 0} 0$ such that
  if $\bs u: (t_1, t_2) \to \cE$ is a solution of \eqref{eq:nlw} satisfying for all $t \in (t_1, t_2)$
  \begin{equation}
    \label{eq:bulles-mod-close}
    \|\bs u(t) -(-\bs W_{\wt\lambda_1(t)} + \bs W_{\wt\lambda_2(t)})\|_\cE \leq \delta,\qquad 0 < \frac{\wt\lambda_1(t)}{\wt\lambda_2(t)} \leq \wt \lambda,
  \end{equation}
  then there exist unique functions $\lambda_1(t) \in C^1((t_1, t_2), (0, +\infty))$, $\lambda_2(t) \in C^1((t_1, t_2), (0, +\infty))$  and $a_2(t) \in C^1((t_1, t_2), \bR)$ such that for
  \begin{equation}
    \label{eq:bulles-g}
    \bs g(t) := \bs u(t) - \bs U(\lambda_1, \lambda_2, a_2)
  \end{equation}
  the following holds for all $t \in (t_1, t_2)$:
  \begin{align}
    \la\cZ_\uln{\lambda_1(t)}, g(t)\ra = \la\cZ_\uln{\lambda_2(t)}, g(t)\ra = \la \alpha_{\lambda_2(t)}^-, \bs g(t)\ra &= 0, \label{eq:bulles-gorth} \\
    \|\bs g(t)\|_\cE &\leq \eta, \label{eq:bulles-gbound} \\
    |\lambda_1(t)/\wt \lambda_1(t) - 1| + |\lambda_2(t)/\wt \lambda_2(t) - 1| + |a_2(t)| &\leq \eta. \label{eq:bulles-lambda-range}
  \end{align}
  In addition,
  \begin{align}
    |\lambda_1'(t)| + |\lambda_2'(t)| &\lesssim \cN(\bs g(t), \lambda(t)), \label{eq:bulles-mod} \\
    \big|a_2'(t) + \frac{\nu}{\lambda_2(t)}a_2(t)\big| &\lesssim \frac{1}{\lambda_2(t)}\big(|a_2(t)|\cdot \cN(\bs g(t), \lambda(t))+ \cN(\bs g(t), \lambda(t))^2\big), \label{eq:bulles-mod-a}
  \end{align}
  with constants which may depend on $R$.
\end{lemma}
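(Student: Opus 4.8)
The plan is to follow, step by step, the scheme of the proof of Lemma~\ref{lem:coer-mod}, with the single scaling parameter $\lambda$ replaced by the pair $(\lambda_1,\lambda_2)$ and the stable-manifold ansatz $\bs U_\lambda^a$ replaced by $\bs U(\lambda_1,\lambda_2,a_2)=\bs U_{\lambda_2}^{a_2}-\bs V(\lambda_1,\lambda_2)$. The only genuinely new feature is that this ansatz is no longer (asymptotically) a solution of \eqref{eq:nlw}: expanding $\partial_t\bs U(\lambda_1,\lambda_2,a_2)$ by the chain rule and \eqref{eq:bulles-dUl1}, \eqref{eq:bulles-dUl2} and \eqref{eq:bulles-dUa}, the term $a_2'\partial_{a_2}\bs U$ produces $J\circ\vD E(\bs U_{\lambda_2}^{a_2})$ modulo a multiple of $a_2'+\tfrac{\nu a_2}{\lambda_2}$, so that
\begin{equation*}
  \partial_t\bs g=J\circ\big(\vD E(\bs U(\lambda_1,\lambda_2,a_2)+\bs g)-\vD E(\bs U_{\lambda_2}^{a_2})\big)+\lambda_1'\partial_{\lambda_1}\bs V+\lambda_2'\Big(\tfrac1{\lambda_2}\Lambda_\cE\bs U_{\lambda_2}^{a_2}+\partial_{\lambda_2}\bs V\Big)-\Big(a_2'+\tfrac{\nu a_2}{\lambda_2}\Big)\partial_a\bs U_{\lambda_2}^{a_2},
\end{equation*}
and $\vD E(\bs U(\lambda_1,\lambda_2,a_2)+\bs g)-\vD E(\bs U_{\lambda_2}^{a_2})$ splits, besides the part linear (and higher order) in $\bs g$, into an \emph{interaction residual} $\vD E(\bs U_{\lambda_2}^{a_2}-\bs V)-\vD E(\bs U_{\lambda_2}^{a_2})$ supported in $\{|x|\lesssim R\sqrt{\lambda_1\lambda_2}\}$. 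Estimating this residual by means of Lemma~\ref{lem:bulles-prop-V} is what forces the quantity $\cN(\bs g,\lambda)$ rather than $\|\bs g\|_\cE$ into \eqref{eq:bulles-mod} and \eqref{eq:bulles-mod-a}, and I expect it to be the main technical point.

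For the construction at a fixed time $t_0$ (after rescaling so that $\wt\lambda_2(t_0)=1$) I would apply the quantitative Implicit Function Theorem of Remark~\ref{rem:implicit} to $\Phi\colon\cE\times\bR^3\to\bR^3$ whose components are $\la\eee^{-l_1}\cZ_\uln{\eee^{l_1}},g_0\ra$, $\la\eee^{-l_2}\cZ_\uln{\eee^{l_2}},g_0\ra$ and $\la\alpha^-_{\eee^{l_2}},\bs g_0\ra$, with $\bs g_0:=\bs u_0-\bs U_{\eee^{l_2}}^{a_2}+\bs V_R(\eee^{l_1},\eee^{l_2})$. At the reference point $\bs u_0=-\bs W_{\wt\lambda_1}+\bs W$, $(l_1,l_2,a_2)=(\log\wt\lambda_1,0,0)$ one has $\bs g_0=\bs V_R(\wt\lambda_1,1)-\bs W_{\wt\lambda_1}$, so $\Phi$ does not vanish there but is bounded by the truncation error $R^{-\frac{N-2}{2}}\wt\lambda_1^{\frac{N-2}{4}}$ of \eqref{eq:db-V-W}, while the differential $\partial_{l_1,l_2,a_2}\Phi$ is, because of the scale separation $\lambda_1\ll\lambda_2$ (which makes the cross-pairings of objects living at scale $\lambda_1$ with objects living at scale $\lambda_2$ small powers of $\lambda$) together with the identities $\la\cZ,\cY\ra=0$, $\la\alpha^-,\Lambda_\cE\bs W\ra=0$, $\la\alpha^-,\ym\ra=1$, a small perturbation of a nondegenerate diagonal matrix (with entries $\pm\la\cZ,\Lambda W\ra$ and $-1$), hence boundedly invertible. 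Since both the truncation error (for fixed $R$) and $\delta$ can be made arbitrarily small by shrinking $\wt\lambda$ and $\delta$, Remark~\ref{rem:implicit} provides, for $\bs u_0$ in the $\delta$-ball, solutions of $\Phi=0$ in a ball of radius $\eta=\eta(\delta,\wt\lambda)\to0$, yielding \eqref{eq:bulles-gorth}, \eqref{eq:bulles-gbound} and \eqref{eq:bulles-lambda-range}. The $C^1$ dependence of $\lambda_1,\lambda_2,a_2$ on $t$ is then obtained exactly as in Step~2 of the proof of Lemma~\ref{lem:coer-mod}: since $\cZ\in C_0^\infty$ and $\cY$ is smooth with exponential decay, $\vD_{\bs u}\Phi$ extends to a bounded functional on $\cF$, so one may solve $\dd t(l_1,l_2,a_2)=-(\partial_{l_1,l_2,a_2}\Phi)^{-1}(\vD_{\bs u}\Phi)\partial_t\bs u(t)$ with the initial data just constructed, check via the chain rule that $\Phi(\bs u(t);l_1(t),l_2(t),a_2(t))\equiv0$, and conclude by the uniqueness in the Implicit Function Theorem that this solution coincides with $(\log\lambda_1,\log\lambda_2,a_2)$ near $t_0$.

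Finally, differentiating the three orthogonality relations in \eqref{eq:bulles-gorth} in time and inserting the equation for $\partial_t\bs g$ above leads to a $3\times3$ linear system for $\big(\lambda_1',\lambda_2',\lambda_2(a_2'+\tfrac{\nu a_2}{\lambda_2})\big)$ whose matrix, by the estimates used in the construction together with $\|\Lambda_\cE\bs U_{\lambda_2}^{a_2}-\Lambda_\cE\bs W_{\lambda_2}\|_\cE\lesssim|a_2|$ and $\|\partial_a\bs U_{\lambda_2}^{a_2}-\cY_{\lambda_2}^-\|_\cE\lesssim|a_2|$, has determinant bounded below and inverse of size $\lesssim1$ in the appropriate units. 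For the right-hand side I would use: $|\la\cZ_\uln{\lambda_i},\dot g\ra|\lesssim\|\bs g\|_\cE$ (the interaction residual, living in the first component, does not enter the two $\cZ$-equations, since $J$ moves it to the second component); and, for the $\alpha^-_{\lambda_2}$-equation, the cancellation $\la\alpha^-_{\lambda_2},J\circ\vD^2 E(\bs W_{\lambda_2})\bs g\ra=-\tfrac\nu{\lambda_2}\la\alpha^-_{\lambda_2},\bs g\ra=0$ from \eqref{eq:eigencovectl}, after which the remaining terms are controlled by the weak linearization lemma (the part quadratic in $\bs g$, and the part in which $\vD^2 E(\bs U_{\lambda_2}^{a_2})-\vD^2 E(\bs W_{\lambda_2})$ contributes an $O(|a_2|)$ factor) and by Lemma~\ref{lem:bulles-prop-V} (the interaction residual), which bounds $|\la\alpha^-_{\lambda_2},J\circ(\vD E(\bs U_{\lambda_2}^{a_2}-\bs V)-\vD E(\bs U_{\lambda_2}^{a_2}))\ra|$ by $\lesssim\tfrac1{\lambda_2}\lambda^{\frac{N-2}{2}}$. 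Solving the system and using $\lambda^{\frac{N-2}{2}}\le\cN(\bs g,\lambda)^2\le\cN(\bs g,\lambda)$ and $\lambda^{\frac{N-2}{4}}\|\bs g\|_\cE\le\tfrac12\cN(\bs g,\lambda)^2$ then gives \eqref{eq:bulles-mod} and \eqref{eq:bulles-mod-a}, with constants depending on $R$. The hardest step is, as announced, this last $\tfrac1{\lambda_2}\lambda^{\frac{N-2}{2}}$ bound on the interaction residual tested against $\alpha^-_{\lambda_2}$: one must combine the $\lambda^{\frac{N-2}{2}}$ smallness of the two-bubble interaction with the algebraic annihilation of the linear term, and it is this estimate that makes $\cN(\bs g,\lambda)$ — not $\|\bs g\|_\cE$ — the natural quantity in the conclusion.
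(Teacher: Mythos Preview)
Your proposal is correct and follows the paper's approach closely: quantitative IFT for the static decomposition, the ODE argument of Lemma~\ref{lem:coer-mod} for $C^1$ regularity, differentiation of the orthogonality relations to obtain a $3\times 3$ system, and the key interaction estimate on $\langle\alpha^-_{\lambda_2},J\circ(\vD E(\bs U_{\lambda_2}^{a_2}-\bs V)-\vD E(\bs U_{\lambda_2}^{a_2}))\rangle$. Your splitting of the nonlinear term (separating off the interaction residual at $\bs g=0$) is slightly different from the paper's (which keeps $\bs g$ in the residual), but both work.

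One point deserves more care. You assert that the inverse matrix has ``size $\lesssim 1$ in the appropriate units'' and then conclude \eqref{eq:bulles-mod-a}. This is not enough: with an inverse merely $O(1)$, the first two right-hand side entries ($\lesssim\|\bs g\|_\cE$) would contribute $\tfrac1{\lambda_2}\|\bs g\|_\cE$ to the third unknown, which is only $\lesssim\tfrac1{\lambda_2}\cN$, not $\tfrac1{\lambda_2}(|a_2|\cN+\cN^2)$. The paper exploits the \emph{triangular} structure of the coefficient matrix --- specifically $|M_{31}|\lesssim\lambda^{N/2}$ and $|M_{32}|\lesssim |a_2|+\cN$ (the latter coming precisely from $\langle\alpha^-_{\lambda_2},\Lambda_\cE\bs W_{\lambda_2}\rangle=0$ plus the estimates $\|\Lambda_\cE\bs U_{\lambda_2}^{a_2}-\Lambda_\cE\bs W_{\lambda_2}\|_\cE\lesssim|a_2|$ you mention) --- to deduce that the third row of the inverse satisfies $|P_{31}|,|P_{32}|\lesssim |a_2|+\cN$. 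This is what turns the $\|\bs g\|_\cE$ contributions into $(|a_2|+\cN)\|\bs g\|_\cE\lesssim |a_2|\cN+\cN^2$. You have listed all the ingredients, but you should make this step explicit; otherwise \eqref{eq:bulles-mod-a} does not follow from what you wrote. Relatedly, the Jacobian in the IFT step is not a perturbation of a diagonal matrix (the entries $M_{12},M_{13},M_{23}$ are $O(1)$, not small); it is the smallness of the \emph{sub}-diagonal entries that gives both the invertibility and the refined bound.
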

\begin{proof}
  We will follow the same scheme as in the proof of Lemma~\ref{lem:coer-mod}.
  One additional difficulty is that we cannot reduce by rescaling to modulation near one specific function as we did before.

  \paragraph{\textbf{Step 1.}}
  We consider $\Phi: \cE \times \bR^3 \to \bR^3$ defined as
  \begin{equation*}
    \begin{aligned}
    &\Phi(\bs u_0; l_1, l_2, a_2) = \big(\Phi_1(\bs u_0; l_1, l_2, a_2), \Phi_2(\bs u_0; l_1, l_2, a_2), \Phi_3(\bs u_0; l_1, l_2, a_2)\big) \\
    := &\big(\la \frac{1}{\lambda_1}\cZ_\uln{\lambda_1}, u_0 - U(\lambda_1, \lambda_2, a_2)\ra, \la \frac{1}{\lambda_2}\cZ_\uln{\lambda_2}, u_0 - U(\lambda_1, \lambda_2, a_2)\ra, \la\alpha_{\lambda_2}^-, \bs u_0 - \bs U(\lambda_1, \lambda_2, a_2)\ra\big),
  \end{aligned}
  \end{equation*}
  where we have already written $\lambda_j$ instead of $\eee^{l_j}$ in order to simplify the notation.
  We will verify that the assumptions \eqref{eq:implicit-1} and \eqref{eq:implicit-2} are satisfied for
  $x_0 = \bs U(\wt \lambda_1, \wt \lambda_2, 0)$, $y_0 = (\wt l_1, \wt l_2, 0)$ (where $\wt l_j := \log \wt \lambda_j$),
  $\rho$ small and $\eta = C\rho$ with $C$ a universal constant. We define:
  \begin{align}
    M_{11}(\bs g; \lambda_1, \lambda_2, a_2) &:= \la\frac{1}{\lambda_1}\cZ_{\lambda_1}, \lambda_1 \partial_{\lambda_1}V(\lambda_1, \lambda_2)\ra - \la \frac{1}{\lambda_1}\Lambda_{-1}\cZ_\uln{\lambda_1}, g\ra, \\
        M_{12}(\bs g; \lambda_1, \lambda_2, a_2) &:= \la \frac{1}{\lambda_1}\cZ_\uln{\lambda_1}, \Lambda U_{\lambda_2}^{a_2} + \lambda_2 \partial_{\lambda_2}V(\lambda_1, \lambda_2)\ra , \\
        M_{13}(\bs g; \lambda_1, \lambda_2, a_2) &:= -\la \frac{1}{\lambda_1}\cZ_\uln{\lambda_1}, \partial_a U_{\lambda_2}^{a_2}\ra , \\
        M_{21}(\bs g; \lambda_1, \lambda_2, a_2) &:= \la \frac{1}{\lambda_2} \cZ_\uln{\lambda_2}, \lambda_1 \partial_{\lambda_1}V(\lambda_1, \lambda_2)\ra, \\
        M_{22}(\bs g; \lambda_1, \lambda_2, a_2) &:= \la\frac{1}{\lambda_2}\cZ_\uln{\lambda_2}, \Lambda U_{\lambda_2}^{a_2} + \lambda_2\partial_{\lambda_2}V(\lambda_1, \lambda_2) \ra - \la \frac{1}{\lambda_2}\Lambda_{-1}\cZ_\uln{\lambda_2}, g\ra, \\
        M_{23}(\bs g; \lambda_1, \lambda_2, a_2) &:= -\la \frac{1}{\lambda_2}\cZ_\uln{\lambda_2}, \partial_a U_{\lambda_2}^{a_2}\ra , \\
        M_{31}(\bs g; \lambda_1, \lambda_2, a_2) &:= \la \alpha_{\lambda_2}, \lambda_1 \partial_{\lambda_1}V(\lambda_1, \lambda_2)\ra , \\
        M_{32}(\bs g; \lambda_1, \lambda_2, a_2) &:= -\la \Lambda_{\cE^*}\alpha_{\lambda_2}, \bs g\ra + \la \alpha_{\lambda_2}, \Lambda_{\cE}\bs U_{\lambda_2}^{a_2} + \lambda_2 \partial_{\lambda_2}\bs V(\lambda_1, \lambda_2)\ra , \\
        M_{33}(\bs g; \lambda_1, \lambda_2, a_2) &:= -\la \alpha_{\lambda_2}, \partial_a \bs U_{\lambda_2}^{a_2}\ra , \\
  \end{align}
  A straightforward computation yields
  \begin{equation}
    \label{eq:bulles-triangular}
  \begin{aligned}
    &|M_{11}| \sim &1,\quad &|M_{12}| \lesssim &1,\quad &|M_{13}| \lesssim &1, \\
    &|M_{21}| \lesssim &\lambda^\frac N2,\quad &|M_{22}| \lesssim &1,\quad &|M_{23}| \lesssim &1, \\
    &|M_{31}| \lesssim &\lambda^\frac N2,\quad &|M_{32}| \lesssim &\cN(\bs g, \lambda) + |a_2|,\quad &|M_{33}| \sim &1.
  \end{aligned}
\end{equation}

  Using \eqref{eq:bulles-dUl1}, \eqref{eq:bulles-dUl2}, \eqref{eq:bulles-dUa}
  and the fact that $\partial_{l_j} = \lambda_j \partial_{\lambda_j}$ we see that
  $$
  \begin{aligned}
    M_{jk}(\bs u_0 - \bs U(\lambda_1, \lambda_2, a_2); \lambda_1, \lambda_2, a_2) &= \partial_{l_k}\Phi_j(\bs u_0; l_1, l_2, a_2), \qquad &j\in\{1, 2, 3\},&~k\in \{1, 2\}, \\
    M_{j3}(\bs u_0 - \bs U(\lambda_1, \lambda_2, a_2); \lambda_1, \lambda_2, a_2) &= \partial_{a_2} \Phi_j(\bs u_0; l_1, l_2, a_2), \qquad &j\in\{1, 2, 3\},&
  \end{aligned}
  $$
  hence \eqref{eq:bulles-triangular} implies that the jacobian matrix of $\Phi$ with respect to the modulation parameters
  is uniformly non-degenerate in a neighbourhood of $\bs U(\lambda_1, \lambda_2, a_2)$.
  This yields parameters $\lambda_1(t_0)$, $\lambda_2(t_0)$ and $a_2(t_0)$, see Remark~\ref{rem:implicit}.
  
  \paragraph{\textbf{Step 2.}}
  The argument from the proof of Lemma~\ref{lem:coer-mod} shows that $\lambda_1(t)$, $\lambda_2(t)$ and $a_2(t)$ are $C^1$ functions of $t \in (t_1, t_2)$.
  \paragraph{\textbf{Step 3.}}
  From \eqref{eq:bulles-g} we obtain the following differential equation of the error term $\bs g$:
\begin{equation*}
  \partial_t \bs g = \partial_t(\bs u - \bs U(\lambda_1, \lambda_2, a_2)) = J\circ\vD E(\bs U(\lambda_1, \lambda_2, a_2)) - \partial_t \bs U_{\lambda_2}^{a_2}.
\end{equation*}
Using \eqref{eq:bulles-dUl1}, \eqref{eq:bulles-dUl2} and \eqref{eq:bulles-dUa} this can rewritten as
\begin{equation}
  \label{eq:bulles-mod-g}
  \begin{aligned}
  \partial_t \bs g &= J\circ(\vD E(\bs U(\lambda_1, \lambda_2, a_2) + \bs g) - \vD E(\bs U_{\lambda_2}^{a_2})) \\
  &+\lambda_1' \partial_{\lambda_1}\bs V(\lambda_1, \lambda_2) +\lambda_2'\cdot\Big(\frac{1}{\lambda_2}\Lambda_\cE \bs U_{\lambda_2}^{a_2} +\partial_{\lambda_2}\bs V(\lambda_1, \lambda_2)\Big)- \big(a_2'+\frac{\nu}{\lambda_2} a_2\big)\partial_a \bs U_{\lambda_2}^{a_2}.
\end{aligned}
\end{equation}
The first component reads:
\begin{equation*}
  \partial_t g = \dot g + \lambda_1'\partial_{\lambda_1}V(\lambda_1, \lambda_2) + \lambda_2'\big(\Lambda U_\uln{\lambda_2}^{a_2}
  +\partial_{\lambda_2} V(\lambda_1, \lambda_2)\big)- \big(a_2'+\frac{\nu}{\lambda_2}a_2\big) \partial_a U_{\lambda_2}^{a_2},
\end{equation*}
hence differentiating in time the first orthogonality relation $\la \frac{1}{\lambda_1}\cZ_\uln{\lambda_1}, g\ra = 0$ we obtain
  \begin{equation}
    \begin{aligned}
      0 = \dd t\la \cZ_\uln{\lambda_1}, g\ra &= -\frac{\lambda_1'}{\lambda_1^2}\la \Lambda_{-1}\cZ_\uln{\lambda_1}, g\ra + \frac{1}{\lambda_1}\la \cZ_\uln{\lambda_1}, \dot g\ra + \frac{\lambda_1'}{\lambda_1}\la\cZ_\uln{\lambda_1}, \partial_{\lambda_1}V(\lambda_1, \lambda_2)\ra \\
      &+ \frac{\lambda_2'}{\lambda_1}\la \cZ_\uln{\lambda_1}, \Lambda U_\uln{\lambda_2}^{a_2} +\partial_{\lambda_2} V(\lambda_1, \lambda_2)\ra - \frac{1}{\lambda_1}\Big(a_2' + \frac{\nu}{\lambda_2}a_2\Big)\la \cZ_\uln{\lambda_1}, \partial_a U_{\lambda_2}^{a_2}\ra,
    \end{aligned}
  \end{equation}
  which can also be written as
  \begin{equation}
    \label{eq:bulles-mod-orth-1}
    M_{11}\cdot \lambda_1' + \lambda M_{12}\cdot \lambda_2' + \lambda M_{13}\cdot \lambda_2\Big(a_2' + \frac{\nu}{\lambda_2}a_2\Big) = -\la \cZ_\uln{\lambda_1}, \dot g\ra,
  \end{equation}
  where for simplicity we write $M_{jk}$ instead of $M_{jk}(\bs g; \lambda_1, \lambda_2, a_2)$.
  Similarly, differentiating the second orthogonality relation $\la \frac{1}{\lambda_2}\cZ_\uln{\lambda_2}, g\ra = 0$ we obtain
  \begin{equation}
    \begin{aligned}
      0 = \dd t\la \cZ_\uln{\lambda_2}, g\ra &= -\frac{\lambda_2'}{\lambda_2^2}\la \Lambda_{-1}\cZ_\uln{\lambda_2}, g\ra + \frac{1}{\lambda_2}\la \cZ_\uln{\lambda_2}, \dot g\ra + \frac{\lambda_1'}{\lambda_2}\la\cZ_\uln{\lambda_2}, \partial_{\lambda_1}V(\lambda_1, \lambda_2)\ra \\
      &+ \frac{\lambda_2'}{\lambda_2}\la \cZ_\uln{\lambda_2}, \Lambda U_\uln{\lambda_2}^{a_2} +\partial_{\lambda_2} V(\lambda_1, \lambda_2)\ra - \frac{1}{\lambda_2}\Big(a_2' + \frac{\nu}{\lambda_2}a_2\Big)\la \cZ_\uln{\lambda_2}, \partial_a U_{\lambda_2}^{a_2}\ra,
    \end{aligned}
  \end{equation}
  which can also be written as
  \begin{equation}
    \label{eq:bulles-mod-orth-2}
    \frac{1}{\lambda}M_{21}\cdot \lambda_1' + M_{22}\cdot \lambda_2' + M_{23}\cdot \lambda_2\Big(a_2' + \frac{\nu}{\lambda_2}a_2\Big) = -\la \cZ_\uln{\lambda_2}, \dot g\ra.
  \end{equation}
  Finally, differentiating the third orthogonality relation $\la \alpha_{\lambda_2}^-, \bs g\ra = 0$ we obtain

  \begin{equation*}
    \begin{aligned}
      0 &= \dd t\la\alpha_{\lambda_2}^-, \bs g\ra = -\frac{\lambda_2'}{\lambda_2}\la\Lambda_{\cE^*}\alpha_{\lambda_2}^-, \bs g\ra +
      \la\alpha_{\lambda_2}^-, J\circ(\vD E(\bs U(\lambda_1, \lambda_2, a_2) + \bs g) - \vD E(\bs U_{\lambda_2}^{a_2}))\ra \\
      &+ \lambda_1'\la \alpha_{\lambda_2}^-, \partial_{\lambda_1} \bs V(\lambda_1, \lambda_2)\ra+\frac{\lambda_2'}{\lambda_2}\la\alpha_{\lambda_2}^-, \Lambda_\cE\bs U_{\lambda_2}^{a_2} + \lambda_2 \partial_{\lambda_2}\bs V(\lambda_1, \lambda_2)\ra - \Big(a_2' + \frac{\nu a_2}{\lambda_2}\Big)\big\la\alpha_{\lambda_2}^-, \partial_a \bs U_{\lambda_2}^{a_2}\big\ra,
  \end{aligned}
  \end{equation*}
  which can also be written as
  \begin{equation}
    \label{eq:bulles-mod-orth-3}
    \frac{1}{\lambda}M_{31}\cdot \lambda_1' + M_{32}\cdot \lambda_2' + M_{33}\cdot \lambda_2\Big(a_2' + \frac{\nu}{\lambda_2}a_2\Big)
    = -\lambda_2\la\alpha_{\lambda_2}^-, J\circ(\vD E(\bs U(\lambda_1, \lambda_2, a_2) + \bs g)-\vD E(\bs U_{\lambda_2}^{a_2})\ra.
  \end{equation}
  Equations \eqref{eq:bulles-mod-orth-1}, \eqref{eq:bulles-mod-orth-2} and \eqref{eq:bulles-mod-orth-3}
  form a linear system for $\lambda_1'$, $\lambda_2'$ and $\lambda_2\Big(a_2' + \frac{\nu}{\lambda_2}a_2\Big)$:

  \begin{equation*}
    \begin{pmatrix}
      M_{11} & \lambda M_{12} & \lambda M_{13} \\ \frac{1}{\lambda}M_{21} & M_{22} & M_{23} \\ \frac{1}{\lambda}M_{31} & M_{32} & M_{33}
    \end{pmatrix}
    \begin{pmatrix}
      \lambda_1' \\ \lambda_2' \\ a_2 + \frac{\lambda_2 a_2'}{\nu}
    \end{pmatrix} =
    \begin{pmatrix}
      -\la\cZ_\uln{\lambda_1}, \dot g\ra \\
      -\la\cZ_\uln{\lambda_2}, \dot g\ra \\
      -\lambda_2\la \alpha_{\lambda_2}^-, J\circ(\vD E(\bs U(\lambda_1, \lambda_2, a_2) + \bs g) - \vD E(\bs U_{\lambda_2}^{a_2}))\ra
    \end{pmatrix}.
  \end{equation*}

We will check that
\begin{equation}
  \label{eq:bulles-linear-right}
|\la \alpha_{\lambda_2}^-, J\circ(\vD E(\bs U(\lambda_1, \lambda_2, a_2) + \bs g) - \vD E(\bs U_{\lambda_2}^{a_2}))\ra|\lesssim \frac{1}{\lambda_2}\cN(\bs g, \lambda)(|a_2| + \cN(\bs g, \lambda)).
\end{equation}
By \eqref{eq:coer-alpha-miracle}, it suffices to show that
\begin{equation}
  \label{eq:bulles-infl-bulle1}
|\la \alpha_{\lambda_2}^-, J\circ(\vD E(\bs U(\lambda_1, \lambda_2, a_2) + \bs g) - \vD E(\bs U^{a_2}_{\lambda_2} + \bs g))\ra| \lesssim \frac{1}{\lambda_2}\cN(\bs g, \lambda)^2.
\end{equation}
Without loss of generality we can assume that $\lambda_2 = 1$ and $\lambda_1 = \lambda$,
hence \eqref{eq:bulles-infl-bulle1} is equivalent to
\begin{equation}
  \label{eq:bulles-destab-01}
|\la \cY, -\Delta V(\lambda, 1) + f(-V(\lambda, 1) + U^{a_2} + g) - f(U^{a_2} + g)\ra| \lesssim \cN(\bs g, \lambda)^2.
\end{equation}
We have
$$|\la \cY, \Delta V(\lambda, 1)\ra| = |\la \Delta \cY, V(\lambda, 1)\ra| \lesssim \lambda^\frac{N-2}{2}$$
because of \eqref{eq:db-lV-L1}.
For the other term we use the bound
$$
|f(-V(\lambda, 1) + U^{a_2} + g) - f(U^{a_2} + g)| \lesssim (f'(U^{a_2}) + f'(g))V(\lambda, 1) + f(V(\lambda, 1)).
$$
From \eqref{eq:db-V-L1} we obtain
$|\la \cY, f'(U^{a_2})V(\lambda, 1)\ra| \lesssim \|V(\lambda, 1)\|_{L^1} \lesssim \cN(\bs g, \lambda)^2$.
Using H\"older we compute
$$
|\la \cY, f'(g)\cdot V(\lambda, 1)\ra| \lesssim \|f'(g)\|_{L^\frac N2}\cdot \|V(\lambda, 1)\|_{L^{\frac{N}{N-2}}}\lesssim
\|\bs g\|_\cE^\frac{4}{N-2}\cdot \lambda^\frac{N-2}{2}|\log\lambda| \lesssim \cN(\bs g, \lambda)^2.
$$
Finally, $|\la \cY, f(V(\lambda, 1))\ra| \lesssim \|f(W_\lambda)\|_{L^1} \lesssim \lambda^\frac{N-2}{2}$.
This finishes the proof of \eqref{eq:bulles-destab-01}, hence we have shown \eqref{eq:bulles-linear-right}.

Consider the inverse matrix
\begin{equation*}
  \begin{pmatrix}
    P_{11} & P_{12} & P_{13} \\ P_{21} & P_{22} & P_{23} \\ P_{31} & P_{32} & P_{33}
  \end{pmatrix} :=
  \begin{pmatrix}
    M_{11} & \lambda M_{12} & \lambda M_{13} \\ \frac{1}{\lambda}M_{21} & M_{22} & M_{23} \\ \frac{1}{\lambda}M_{31} & M_{32} & M_{33}
  \end{pmatrix}^{-1}.
\end{equation*}
From \eqref{eq:bulles-triangular} we obtain
  \begin{equation}
    \label{eq:bulles-triangular-2}
  \begin{aligned}
    &|P_{11}| \lesssim &1,\quad &|P_{12}| \lesssim &1,\quad &|P_{13}| \lesssim &1, \\
    &|P_{21}| \lesssim &1,\quad &|P_{22}| \lesssim &1,\quad &|P_{23}| \lesssim &1, \\
    &|P_{31}| \lesssim &\cN(\bs g, \lambda) + |a_2|,\quad &|P_{32}| \lesssim &\cN(\bs g, \lambda) + |a_2|,\quad &|P_{33}| \lesssim &1,
  \end{aligned}
\end{equation}
hence \eqref{eq:bulles-linear-right} yields \eqref{eq:bulles-mod} and \eqref{eq:bulles-mod-a}.

\end{proof}

We finish this subsection by analyzing the stability and instability components at both scales $\lambda_1(t)$ and $\lambda_2(t)$.
At the scale $\lambda_2(t)$ we use the refined component $\beta_{\lambda_2}^{a_2}$ introduced in Section~\ref{sec:coer}, see \eqref{eq:beta}.
\begin{proposition}
  \label{prop:bulles-dtb}
  The functions
  $$
  a_1^-(t) := \la \alpha_{\lambda_1(t)}^-, \bs g(t)\ra,\qquad a_1^+(t) := \la \alpha_{\lambda_1(t)}^+, \bs g(t)\ra, \qquad b_2(t):= \la \beta_{\lambda_2(t)}^{a_2(t)}, \bs g(t)\ra
  $$
  satisfy
  \begin{align}
    \big|\dd t a_1^-(t) + \frac{\nu}{\lambda_1(t)}a_1^-(t)\big| &\lesssim \frac{1}{\lambda_1(t)}\cN(\bs g(t), \lambda(t))^2, \label{eq:bulles-dtAp} \\
    \big|\dd t a_1^+(t) - \frac{\nu}{\lambda_1(t)}a_1^+(t)\big| &\lesssim \frac{1}{\lambda_1(t)}\cN(\bs g(t), \lambda(t))^2, \label{eq:bulles-dtAm} \\
    \big|\dd t b_2(t) - \frac{\nu}{\lambda_2(t)}b_2(t)\big| &\lesssim \frac{1}{\lambda_2(t)}\cN(\bs g(t), \lambda(t))^2 \label{eq:bulles-dtb},
  \end{align}
  with constants eventually depending on $R$.
\end{proposition}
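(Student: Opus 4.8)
The plan is to adapt the two model computations already carried out in Section~\ref{sec:coer}: the linear eigencovector identities \eqref{eq:eigenvectl}--\eqref{eq:eigencovectl} for the bounds on $a_1^\pm$, and the proof of Proposition~\ref{prop:coer-dtb} for the bound on $b_2$. Two new features must be absorbed: the base point is now the two-bump configuration $\bs U(\lambda_1,\lambda_2,a_2)=\bs U_{\lambda_2}^{a_2}-\bs V(\lambda_1,\lambda_2)$ rather than a single $\bs U_\lambda^a$, and the modulation bounds carry the interaction error $\cN(\bs g,\lambda)$ instead of $\|\bs g\|_\cE$, see \eqref{eq:bulles-mod}--\eqref{eq:bulles-mod-a}. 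The recurring mechanism throughout is that every bubble-interaction remainder produces a factor $\lambda^{\frac{N-2}{2}}$ (or smaller), supplied by the sharp $L^1$ and $L^\infty$ bounds of Lemma~\ref{lem:bulles-prop-V}, by the orthogonality $\la\cY,\Lambda W\ra=0$ from \eqref{eq:YLW}, and by the exponential decay of $\cY$; such a factor is $\lesssim\cN(\bs g,\lambda)^2$, and when it instead multiplies a modulation rate one gains an extra power and Young's inequality absorbs any $\|\bs g\|_\cE$ into $\cN(\bs g,\lambda)^2$.

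\emph{The bounds \eqref{eq:bulles-dtAp}--\eqref{eq:bulles-dtAm}.} I would differentiate $a_1^\pm(t)=\la\alpha_{\lambda_1(t)}^\pm,\bs g(t)\ra$. The contribution of $\partial_t\alpha_{\lambda_1}^\pm=-\frac{\lambda_1'}{\lambda_1}\Lambda_{\cE^*}\alpha_{\lambda_1}^\pm$ is $\lesssim\frac1{\lambda_1}\cN(\bs g,\lambda)\|\bs g\|_\cE$ by \eqref{eq:bulles-mod}. In $\la\alpha_{\lambda_1}^\pm,\partial_t\bs g\ra$ I insert \eqref{eq:bulles-mod-g}. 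Among the drift terms, $\la\alpha_{\lambda_1}^\pm,\partial_{\lambda_1}\bs V\ra$ has leading part $-\frac1{\lambda_1}\la\cY,\Lambda W\ra=0$ by \eqref{eq:YLW}, with remainder controlled by \eqref{eq:db-lV-L1}, while $\la\alpha_{\lambda_1}^\pm,\Lambda_\cE\bs U_{\lambda_2}^{a_2}\ra$, $\la\alpha_{\lambda_1}^\pm,\partial_a\bs U_{\lambda_2}^{a_2}\ra$, $\la\alpha_{\lambda_1}^\pm,\partial_{\lambda_2}\bs V\ra$ are $O(\lambda^{\frac{N-2}{2}})$ by the scale separation (the scale-$\lambda_2$ profile being essentially constant where $\cY_{\uln{\lambda_1}}$ concentrates) together with \eqref{eq:db-V-L1}--\eqref{eq:db-lV-L1}; combined with \eqref{eq:bulles-mod}--\eqref{eq:bulles-mod-a} all of these are $\lesssim\frac1{\lambda_1}\cN(\bs g,\lambda)^2$. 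For the principal term $\la\alpha_{\lambda_1}^\pm,J\circ(\vD E(\bs U(\lambda_1,\lambda_2,a_2)+\bs g)-\vD E(\bs U_{\lambda_2}^{a_2}))\ra$, writing $\bs U:=\bs U(\lambda_1,\lambda_2,a_2)$, I split $\vD E(\bs U+\bs g)-\vD E(\bs U_{\lambda_2}^{a_2})$ as $[\vD E(\bs U+\bs g)-\vD E(\bs U)-\vD^2 E(\bs U)\bs g]+\vD^2 E(\bs U)\bs g+[\vD E(\bs U)-\vD E(\bs U_{\lambda_2}^{a_2})]$. The first bracket is $\lesssim\frac1{\lambda_1}\|\bs g\|_\cE^2$ by Lemma~\ref{lem:weak-linea}. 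In the third bracket $W$ solves \eqref{eq:elliptic} away from the truncation sphere and $f$ is odd, so the leading part $\Delta W_{\lambda_1}+f(W_{\lambda_1})$ cancels, leaving a quantity that pairs with $\cY_{\uln{\lambda_1}}$ to give $O(\frac1{\lambda_1}\lambda^{\frac{N-2}{2}})$ via \eqref{eq:db-V-Linf}, \eqref{eq:db-V-L1}. Since $f'$ is even, on the support of $\cY_{\uln{\lambda_1}}$ the potential $f'(U_{\lambda_2}^{a_2}-V)$ differs from $f'(W_{\lambda_1})$ by a term which is $O(\frac1{\lambda_1}\lambda^{\frac{N-2}{4}})$ in $L^{2N/(N+2)}$ (by \eqref{eq:db-V-Linf}), so after H\"older and Young $\la\alpha_{\lambda_1}^\pm,J\circ\vD^2 E(\bs U)\bs g\ra=\la\alpha_{\lambda_1}^\pm,J\circ\vD^2 E(\bs W_{\lambda_1})\bs g\ra+O(\frac1{\lambda_1}\cN(\bs g,\lambda)^2)=\pm\frac{\nu}{\lambda_1}a_1^\pm+O(\frac1{\lambda_1}\cN(\bs g,\lambda)^2)$ by \eqref{eq:eigencovectl}. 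Collecting the pieces yields \eqref{eq:bulles-dtAp} and \eqref{eq:bulles-dtAm}, which differ only in the sign inherited from \eqref{eq:eigencovectl}.

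\emph{The bound \eqref{eq:bulles-dtb}.} Here I would run the proof of Proposition~\ref{prop:coer-dtb} with $\bs U_\lambda^a$ replaced by $\bs U(\lambda_1,\lambda_2,a_2)=\bs U_{\lambda_2}^{a_2}-\bs V(\lambda_1,\lambda_2)$, the modulation estimates \eqref{eq:coer-mod-l}--\eqref{eq:coer-mod-a} replaced by \eqref{eq:bulles-mod}--\eqref{eq:bulles-mod-a}, and the identities \eqref{eq:Ua-crit}, \eqref{eq:eigencovectl} invoked at the scale $\lambda_2$. Two genuinely new kinds of terms occur. First, the extra drift $\lambda_1'\partial_{\lambda_1}\bs V+\lambda_2'\partial_{\lambda_2}\bs V$ in \eqref{eq:bulles-mod-g}: since $\beta_{\lambda_2}^{a_2}$ is smooth and rapidly decaying at scale $\lambda_2$, it pairs against $\partial_{\lambda_j}\bs V$ by an $L^\infty$--$L^1$ estimate and, by \eqref{eq:db-V-L1}--\eqref{eq:db-lV-L1} and \eqref{eq:bulles-mod}, contributes $\lesssim\frac1{\lambda_2}\cN(\bs g,\lambda)\lambda^{\frac{N-2}{2}}\lesssim\frac1{\lambda_2}\cN(\bs g,\lambda)^2$. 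Second, replacing $\vD E(\bs U_{\lambda_2}^{a_2}+\bs g)$ by $\vD E(\bs U_{\lambda_2}^{a_2}-\bs V+\bs g)$ everywhere introduces exactly the influence of bubble one on bubble two, which is the estimate \eqref{eq:bulles-infl-bulle1} with $\alpha_{\lambda_2}^-$ replaced by $\alpha_{\lambda_2}^+$, resp.\ by $\beta_{\lambda_2}^{a_2}$, resp.\ by $\vD E(\bs U_{\lambda_2}^{a_2})$; in each form it is $\lesssim\frac1{\lambda_2}\cN(\bs g,\lambda)^2$. As in Proposition~\ref{prop:coer-dtb} one distinguishes the regime $|a_2(t)|\le\cN(\bs g(t),\lambda(t))$ (reduce $\beta_{\lambda_2}^{a_2}$ to $\alpha_{\lambda_2}^+$ via \eqref{eq:beta-alpha-1}--\eqref{eq:beta-alpha-2} and use \eqref{eq:eigencovectl}) from $|a_2(t)|>\cN(\bs g(t),\lambda(t))$ (use $\beta_{\lambda_2}^{a_2}=-\frac1{2a_2}\vD E(\bs U_{\lambda_2}^{a_2})$ and the criticality identities \eqref{eq:Ua-crit} to annihilate the $\Lambda_\cE\bs U_{\lambda_2}^{a_2}$ and $\partial_a\bs U_{\lambda_2}^{a_2}$ drifts, exactly as in Step~3 there).

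\emph{Main obstacle.} The difficulty is entirely in the bookkeeping of interaction remainders: one must check that every overlap of the scale-$\lambda_1$ weight $\cY_{\uln{\lambda_1}}$ (or $\alpha_{\lambda_1}^\pm$) with the scale-$\lambda_2$ profiles, every truncation error in $\bs V$, and the elliptic mismatch $\vD E(\bs U(\lambda_1,\lambda_2,a_2))-\vD E(\bs U_{\lambda_2}^{a_2})$, is bounded by $\lambda^{\frac{N-2}{2}}$ up to $R$-dependent constants (and up to $\|\bs g\|_\cE$ factors that Young absorbs). The subtlest case is the bound on $b_2$ when $|a_2|>\cN(\bs g,\lambda)$: there one differentiates $\la\vD E(\bs U_{\lambda_2}^{a_2}),\bs g\ra$ and must show that the interaction contribution $\la\vD E(\bs U_{\lambda_2}^{a_2}),J\circ(\vD E(\bs U_{\lambda_2}^{a_2}-\bs V+\bs g)-\vD E(\bs U_{\lambda_2}^{a_2}+\bs g))\ra$ is $\lesssim|a_2|\cdot\frac1{\lambda_2}\cN(\bs g,\lambda)^2$, so that dividing by $2a_2$ still leaves an admissible term.
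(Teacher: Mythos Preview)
Your proposal is correct and follows essentially the same route as the paper: differentiate, insert \eqref{eq:bulles-mod-g}, and show that every term other than the linear eigencovector contribution at the relevant scale is $O(\frac{1}{\lambda_j}\cN^2)$; for $b_2$ you correctly reproduce the two-regime argument of Proposition~\ref{prop:coer-dtb} and identify the key new interaction term \eqref{eq:bulles-dtb-fin1}. Two minor remarks: your appeal to Lemma~\ref{lem:weak-linea} for the quadratic remainder at the two-bubble profile is a slight abuse (that lemma is stated only for $U^a$), and the paper treats this directly via the pointwise bound \eqref{eq:pointwise-ustar-V-g} in the region $|x|\leq c\sqrt\lambda$ where $|U(\lambda,1,a_2)|\sim W_\lambda$; similarly, for the drift term $\la\alpha_{\lambda_1}^\pm,\partial_{\lambda_1}\bs V\ra$ the relevant reference is \eqref{eq:db-lV-Linf} (together with the exponential tail of $\cY$ and, as you note, \eqref{eq:YLW}) rather than \eqref{eq:db-lV-L1}.
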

\begin{proof}~
  \paragraph{\textbf{Step 1.}}
  Directly from the definition of $a_1^-(t)$ we obtain
  \begin{equation*}
    \dd t a_1^-(t) = -\frac{\lambda_1'(t)}{\lambda_1(t)}\la \Lambda_{\cE^*}\alpha_{\lambda_1(t)}^-, \bs g(t)\ra + \la \alpha_{\lambda_1(t)}^-, \partial_t \bs g(t)\ra.
  \end{equation*}
  The first term is negligible due to \eqref{eq:bulles-mod}. We compute the second term using \eqref{eq:bulles-mod-g}.
  We begin by treating the terms in the second line of \eqref{eq:bulles-mod-g}. Since $|\lambda_1'| + |\lambda_2'| \lesssim 1$ and $\big|a_2'+\frac{\nu}{\lambda_2}a_2\big| \lesssim \frac{1}{\lambda_2}$
  (of course Lemma~\ref{lem:bulles-mod} provides better estimates, but we do not need it here), it suffices to check that
  \begin{equation}
    \label{eq:bulles-destab-1}
    \begin{aligned}
    &|\la\alpha_{\lambda_1}^-, \partial_{\lambda_1}\bs V(\lambda_1, \lambda_2)\ra| + |\la\alpha_{\lambda_1}^-, \partial_{\lambda_2}\bs V(\lambda_1, \lambda_2)\ra| \\
    + &|\la \alpha_{\lambda_1}^-, \frac{1}{\lambda_2}\Lambda_\cE \bs U_{\lambda_2}^{a_2}\ra| + |\la \alpha_{\lambda_1}^-, \frac{1}{\lambda_2}\partial_{a_2} \bs U_{\lambda_2}^{a_2}\ra| \lesssim \frac{1}{\lambda_1}\cdot\big(\frac{\lambda_1}{\lambda_2}\big)^\frac{N-2}{2}.
  \end{aligned}
  \end{equation}
  The estimate is invariant by rescaling both $\lambda_1$ and $\lambda_2$, hence we can assume that $\lambda_2 = 1$ and $\lambda_1 = \lambda$.
  For the first term we use \eqref{eq:db-lV-Linf} and rapid decay of $\cY$. Estimating the other terms is straightforward.

  Now consider the first line of \eqref{eq:bulles-mod-g}. It follows from \eqref{eq:eigencovectl} that it suffices to show that
  \begin{equation}
    \big|\big\la \alpha_{\lambda_1}^-, J\circ\big(\vD E(\bs U(\lambda_1, \lambda_2, a_2) + \bs g) - \vD E(\bs U_{\lambda_2}^{a_2}) - \vD^2 E(\bs W_{\lambda_1})\bs g\big)\big\ra\big| \lesssim \frac{1}{\lambda_1}\cN(\bs g, \lambda)^2,
  \end{equation}
  which is equivalent to
  \begin{equation}
    \label{eq:bulles-destab-2}
    |\la \cY_{\lambda_1}, f(U(\lambda_1, \lambda_2, a_2) + g) - f(U_{\lambda_2}^{a_2}) - \Delta V(\lambda_1, \lambda_2) - f'(W_{\lambda_1})g\ra| \lesssim \cN(\bs g, \lambda)^2.
  \end{equation}
  We can assume that $\lambda_2 = 1$ and $\lambda_1 = \lambda$. By the triangle inequality, it suffices to check that
  \begin{align}
    \label{eq:bulles-destab-3}
    |\la \cY_{\lambda}, \Delta V(\lambda, 1) + f(V(\lambda, 1))\ra| &\lesssim \cN(\bs g, \lambda)^2, \\
    \label{eq:bulles-destab-4}
    |\la \cY_{\lambda}, f(U(\lambda, 1, a_2) + g) - f(U(\lambda, 1, a_2)) - f'(U(\lambda, 1, a_2))g\ra| &\lesssim \cN(\bs g, \lambda)^2, \\
    \label{eq:bulles-destab-5}
    |\la \cY_{\lambda}, f(U(\lambda, 1, a_2)) - f(U^{a_2}) + f(V(\lambda, 1))\ra| &\lesssim \cN(\bs g, \lambda)^2, \\
    \label{eq:bulles-destab-6}
    |\la \cY_{\lambda}, \big(f'(U(\lambda, 1, a_2))\big) - f'(W_\lambda)\big)g\ra| &\lesssim \cN(\bs g, \lambda)^2.
  \end{align}

  Notice that $|f(W_\lambda) - f(V(\lambda, 1))| \lesssim f'(W_\lambda)|\cdot |W_\lambda - V(\lambda)| \lesssim f'(W_\lambda)$,
  where the last inequality follows from \eqref{eq:db-V-Linf}. Together with the fact that $\Delta(W_\lambda) + f(W_\lambda) = 0$ this implies
  \begin{equation*}
    \begin{aligned}
      \big|\la \cY_\lambda, \big(\Delta V(\lambda, 1) + f(V(\lambda, 1))\big)\big\ra\big| &\lesssim \big|\big\la \cY_\lambda, \Delta\big(W_\lambda - V(\lambda, 1)\big)\big\ra\big| + \big|\big\la \cY_\lambda, f(W_\lambda) - f(V(\lambda, 1))\big\ra\big| \\
      &\lesssim\|\Delta Y_\lambda\|_{L^1} + \|f'(W_\lambda)\cY_\lambda\|_{L^1} \lesssim \lambda^\frac{N-2}{2},
    \end{aligned}
  \end{equation*}
  which proves \eqref{eq:bulles-destab-3}.

  To fix ideas, notice that while proving the remaining inequalities we can restrict our attention to the region $|x| \leq c\sqrt\lambda$ where $c > 0$ is a small constant
  (the region $|x| \geq c\sqrt\lambda$ is negligible thanks to the rapid decay of $\cY$).
  In this region we have $W_\lambda \geq V(\lambda, 1) \gtrsim 1$ and $|U(\lambda, 1, a_2) + W_\lambda| \leq \frac 12 W_\lambda$ pointwise.

  Inequality \eqref{eq:bulles-destab-5} follows immediately from
  \begin{equation}
    \label{eq:pointwise-ustar-V}
    |f(U(\lambda, 1, a_2)) - f(U^{a_2}) + f(V(\lambda, 1))| = |f(U^{a_2} - V(\lambda, 1)) - f(U^{a_2}) + f(V(\lambda, 1))| \lesssim f'(W_\lambda).
  \end{equation}

  We have the bound
$$
|f'(U(\lambda, 1, a_2)) - f'(W_\lambda)| \lesssim (|f''(W_\lambda)| + |f''(U(\lambda, 1, a_2))|)\cdot|U(\lambda, 1, a_2) + W_\lambda| \lesssim |f''(W_\lambda)|
$$
(even in the case $N \geq 6$ when $f''$ is a negative power).
Using H\"older and the fact that $\|\cY_\lambda\cdot f''(W_\lambda)\|_{L^\frac{2N}{N+2}} \lesssim \lambda^\frac{N-2}{2}$, this implies \eqref{eq:bulles-destab-6}

  For \eqref{eq:bulles-destab-4}, we consider separately the cases $N \in \{3, 4, 5\}$ and $N \geq 6$.
  In the first case, \eqref{eq:bulles-destab-4} follows from the pointwise bound
  \begin{equation}
    \label{eq:pointwise-ustar-V-g}
    |f(U(\lambda, 1, a_2) + g) - f(U(\lambda, 1, a_2)) - f'(U(\lambda, 1, a_2))g| \lesssim |f''(U(\lambda, 1, a_2))|\cdot|g|^2 + f(|g|).
  \end{equation}
In the case $N \geq 6$ we still have
  \begin{equation}
    |f(U(\lambda, 1, a_2) + g) - f(U(\lambda, 1, a_2)) - f'(U(\lambda, 1, a_2))g| \lesssim |f''(U(\lambda, 1, a_2))|\cdot|g|^2,
  \end{equation}
  even if $f''$ is a negative power. This yields \eqref{eq:bulles-destab-4}.

  This finishes the proof of \eqref{eq:bulles-dtAp} and the proof of \eqref{eq:bulles-dtAm} is almost the same.
  \paragraph{\textbf{Step 2.}}
  The proof of \eqref{eq:bulles-dtb} is close to the proof of Proposition~\ref{prop:coer-dtb}, but there will be more error terms to estimate. First we need to show that
  \begin{equation}
    |\la \beta_{\lambda_2}^{a_2} - \alpha_{\lambda_2}^+, \partial_t \bs g\ra| \lesssim \frac{1}{\lambda_2}|a_2|\cdot \cN(\bs g, \lambda). \label{eq:beta-alpha-3}
  \end{equation}
  Since $\|\beta_1^{a_2} - \alpha^+\|_{L^\infty \times L^\infty} \lesssim |a_2|$, the proof of \eqref{eq:bulles-infl-bulle1} gives
$$
|\la \beta_{\lambda_2}^{a_2} - \alpha_{\lambda_2}^+, J\circ(\vD E(\bs U(\lambda_1, \lambda_2, a_2) + \bs g) - \vD E(\bs U^{a_2}_{\lambda_2} + \bs g))\ra| \lesssim \frac{1}{\lambda_2}|a_2|\cdot \cN(\bs g, \lambda)^2 \ll \frac{1}{\lambda_2}|a_2|\cdot \cN(\bs g, \lambda).
$$
 Using \eqref{eq:coer-beta-alpha-4}, we obtain
 $$
|\la \beta_{\lambda_2}^{a_2} - \alpha_{\lambda_2}^+, J\circ(\vD E(\bs U(\lambda_1, \lambda_2, a_2) + \bs g) - \vD E(\bs U^{a_2}_{\lambda_2}))\ra| \lesssim \frac{1}{\lambda_2}|a_2|\cdot \cN(\bs g, \lambda).
 $$
Similarly one obtains
$$
|\la \beta_{\lambda_2}^{a_2} - \alpha_{\lambda_2}^+, \partial_{\lambda_1}\bs V(\lambda_1, \lambda_2)\ra| + |\la \beta_{\lambda_2}^{a_2} - \alpha_{\lambda_2}^+, \partial_{\lambda_2}\bs V(\lambda_1, \lambda_2)\ra| \ll \frac{1}{\lambda_2} |a_2|,
$$
hence \eqref{eq:beta-alpha-3} follows from \eqref{eq:coer-beta-alpha-5}, \eqref{eq:bulles-mod} and \eqref{eq:bulles-mod-a}.

  \paragraph{\textbf{Step 3.}}
  Suppose that
\begin{equation}
  \label{eq:bulles-a-leq-g}
|a_2(t)| \leq \cN(\bs g(t), \lambda(t)).
\end{equation}
We have
\begin{equation}
  \label{eq:bulles-dtb-1}
  \dd t b_2(t) = \la \beta_{\lambda_2(t)}^{a_2(t)}, \partial_t \bs g(t)\ra + \lambda_2'(t)\la \partial_\lambda \beta_{\lambda_2(t)}^{a_2(t)}, \bs g(t)\ra + a_2'(t)\la \partial_a \beta_{\lambda_2(t)}^{a_2(t)}, \bs g\ra.
\end{equation}
From Lemma~\ref{lem:bulles-mod} we know that $|\lambda_2'| \lesssim \cN(\bs g, \lambda)$ and $|a_2'| \lesssim \frac{1}{\lambda_2}\cN(\bs g, \lambda)$. Hence from \eqref{eq:deriv-beta}
it follows that the last two terms of \eqref{eq:bulles-dtb} are negligible.

Using \eqref{eq:bulles-a-leq-g}, \eqref{eq:beta-alpha-1} and \eqref{eq:beta-alpha-3} we see that it is sufficient to show that
\begin{equation}
  \label{eq:bulles-dtalpha}
  \big|\la \alpha_{\lambda_2}^+, \partial_t \bs g\ra - \frac{\nu}{\lambda_2}\la \alpha_{\lambda_2}^+, \bs g\ra\big| = |\la \alpha_{\lambda_2}^+, \partial_t \bs g - J\circ\vD^2 E(\bs W_{\lambda_2})\bs g\ra| \lesssim \frac{1}{\lambda_2}\cN(\bs g, \lambda)^2.
\end{equation}
We develop $\partial_t \bs g$ using \eqref{eq:bulles-mod-g}. Consider first the terms in the second line of \eqref{eq:bulles-mod-g}.
From \eqref{eq:db-lV-L1} and \eqref{eq:bulles-mod} we have
$$|\la \alpha_{\lambda_2}^+, \lambda_1'\partial_{\lambda_1}\bs V(\lambda_1, \lambda_2)\ra| \lesssim \frac{1}{\lambda_2}\cN(\bs g, \lambda)^2.$$
Since $|\partial_{\lambda_2}V(\lambda_1, \lambda_2)| \lesssim \lambda_2^{-\frac N2}$, see \eqref{eq:bulles-Vl2}, using \eqref{eq:bulles-mod} we get
$$|\la \alpha_{\lambda_2}^+, \lambda_2'\partial_{\lambda_1}\bs V(\lambda_1, \lambda_2)\ra| \lesssim \frac{1}{\lambda_2}\cN(\bs g, \lambda)^2.$$
The other two terms have already appeared in the proof of Proposition~\ref{prop:coer-dtb}, see \eqref{eq:coer-dtalpha}.

Consider now the first line of \eqref{eq:bulles-mod-g}. From Lemma~\ref{lem:weak-linea} we deduce that
$$
|\la \alpha_{\lambda_2}^+, J\circ(\vD E(\bs U^{a_2}_{\lambda_2} + \bs g) - \vD E(\bs U^{a_2}_{\lambda_2}) - \vD^2 E(\bs W_{\lambda_2})\bs g)\ra| \lesssim \frac{1}{\lambda_2}\cN(\bs g, \lambda)^2,
$$
hence it suffices to check that
$$
|\la \alpha_{\lambda_2}^+, J\circ(\vD E(\bs U(\lambda_1, \lambda_2, a_2) + \bs g) - \vD E(\bs U^{a_2}_{\lambda_2} + \bs g))\ra| \lesssim \frac{1}{\lambda_2}\cN(\bs g, \lambda)^2,
$$
whose proof is the same as the proof of \eqref{eq:bulles-infl-bulle1}.
\paragraph{\textbf{Step 4.}}
Now we consider the case
\begin{equation}
  \label{eq:bulles-g-leq-a}
  \cN(\bs g(t), \lambda(t)) \leq |a_2(t)|,
\end{equation}
in particular $a_2 \neq 0$.

Recall that (see Proposition~\ref{prop:Ua})
\begin{equation}
  \label{eq:bulles-beta-form-2}
  \beta_{\lambda_2}^{a_2} = -\frac{1}{2a_2}\vD E(\bs U_{\lambda_2}^{a_2}) \quad \Rightarrow \quad b_2(t) = -\frac{1}{2a_2(t)}\cdot \la \vD E(\bs U_{\lambda_2(t)}^{a_2(t)}), \bs g(t)\ra.
\end{equation}
From \eqref{eq:bulles-mod-a} and \eqref{eq:bulles-g-leq-a} we obtain $\big|\frac{a_2'(t)}{a_2(t)} + \frac{\nu}{\lambda_2(t)}\big| \lesssim \frac{1}{\lambda_2(t)}\cN(g(t), \lambda(t))$, hence
\begin{equation*}
  \begin{aligned}
  \dd t b_2(t) &= -\frac{a_2'(t)}{a_2(t)}b_2(t) -\frac{1}{2a_2(t)}\dd t\big\la\vD E(\bs U_{\lambda(t)}^{a_2(t)}),\bs g(t)\big\ra \\ 
  &= \frac{\nu}{\lambda_2(t)}b_2(t) -\frac{1}{2a_2(t)} \dd t\big\la \vD E(\bs U_{\lambda_2(t)}^{a_2(t)}),\bs g(t)\big\ra + \frac{1}{\lambda_2(t)}O(\cN(\bs g(t), \lambda(t))^2).
\end{aligned}
  \end{equation*}
  We compute the second term using \eqref{eq:bulles-mod-g} and \eqref{eq:Ua-crit}:
  \begin{equation}
    \label{eq:bulles-dtb-fin0}
    \begin{aligned}
      &\dd t\la \vD E(\bs U_{\lambda_2}^{a_2}), \bs g\ra = \la \vD^2 E(\bs U_{\lambda_2}^{a_2})\partial_t \bs U_{\lambda_2}^{a_2}, \bs g\ra + \la \vD E(\bs U_\lambda^a), \\
      &\quad J\circ(\vD E(\bs U(\lambda_1, \lambda_2, a_2) + \bs g) - \vD E(\bs U_{\lambda_2}^{a_2})) + \lambda_1'\partial_{\lambda_1}\bs V(\lambda_1, \lambda_2) + \lambda_2'\partial_{\lambda_2} \bs V(\lambda_1, \lambda_2)\ra.
  \end{aligned}
  \end{equation}
  We have to prove that $\big|\dd t\la \vD E(\bs U_{\lambda_2}^{a_2}), \bs g\ra\big| \lesssim \frac{a_2}{\lambda_2}\cN(\bs g, \lambda)^2$. Until the end of this proof ``negligible'' means $\lesssim \frac{a_2}{\lambda_2}\cN(\bs g, \lambda)^2$.

  From \eqref{eq:db-lV-L1} and \eqref{eq:bulles-Vl2} it follows that
  $$
  |\la\vD(\bs U_{\lambda_2}^{a_2}), \partial_{\lambda_1}\bs V(\lambda_1, \lambda_2)\ra| \lesssim \frac{1}{\lambda_2}\lambda^\frac{N-2}{2}, \\
  |\la\vD(\bs U_{\lambda_2}^{a_2}), \partial_{\lambda_2}\bs V(\lambda_1, \lambda_2)\ra| \lesssim \frac{1}{\lambda_2}\lambda^\frac{N}{2}.
$$
By \eqref{eq:bulles-mod} and \eqref{eq:bulles-g-leq-a}, the contribution of the last two terms in \eqref{eq:bulles-dtb-fin0} is negligible.

Next, we will show that
$$
|\la \vD(\bs U_{\lambda_2}^{a_2}), J\circ(\vD E(\bs U(\lambda_1, \lambda_2, a_2) + \bs g) - \vD E(\bs U_{\lambda_2}^{a_2} + \bs g))\ra| \lesssim \frac{a_2}{\lambda_2}\cN(\bs g, \lambda)^2.
$$
We can assume that $\lambda_2 = 1$ and $\lambda_1 = \lambda$, hence we have to prove that
\begin{equation}
  \label{eq:bulles-dtb-fin1}
|\la \dot U^{a_2}, f(U(\lambda, 1, a_2)+g) - f(U_{\lambda_2}^{a_1} + g)\ra| \lesssim a_2 \cN(\bs g, \lambda)^2.
\end{equation}
In the region $|x| > R\sqrt{\lambda}$ the integrand equals $0$. In the region $|x| \leq R\sqrt{\lambda}$ we have a pointwise bound
$$
|f(U(\lambda, 1, a_2)+g) - f(U^{a_2} + g)| \lesssim f'(U^{a_2} + g)W_\lambda + f(W_\lambda) \lesssim (f'(U^{a_2}) + f'(g))W_\lambda + f(W_\lambda).
$$
Recall that $\|\dot U^{a_2}\|_{L^\infty} \lesssim |a_2|$ and $\|U^{a_2}\|_{L^\infty} \lesssim 1$. Thus
\begin{align}
  |\la |\dot U^{a_2}|, f'(U^{a_2})W_\lambda\ra| &\lesssim |a_2|\cdot \|W_\lambda\|_{L^1(|x| \leq R\sqrt\lambda)} \sim |a_2|\lambda^\frac{N-2}{2}, \\
  |\la |\dot U^{a_2}|, f'(g)W_\lambda\ra| &\lesssim |a_2|\cdot \|f'(g)\|_{L^\frac N2}\cdot \|W_\lambda\|_{L^\frac{N}{N-2}(|x| \leq R\sqrt\lambda)} \\
  &\lesssim |a_2|\cdot \|g\|_{\dot H^1}^\frac{4}{N-2}\cdot \lambda^\frac{N-2}{2}|\log \lambda| \lesssim |a_2|\cN(\bs g, \lambda)^2, \\
  |\la |\dot U^{a_2}|, f(W_\lambda)\ra| &\lesssim |a_2|\cdot \|f(W_\lambda)\|_{L^1} \sim |a_2|\lambda^\frac{N-2}{2}.
\end{align}
This proves \eqref{eq:bulles-dtb-fin1}.

In order to finish the proof, it suffices to check that
$$
|\la \vD^2 E(\bs U_{\lambda_2}^{a_2})\partial_t \bs U_{\lambda_2}^{a_2}, \bs g\ra + \la \vD E(\bs U_{\lambda_2}^{a_2}), J\circ(\vD E(\bs U_{\lambda_2}^{a_2} + \bs g) - \vD E(\bs U_{\lambda_2}^{a_2}))\ra| \lesssim \frac{a_2}{\lambda_2}\cN(\bs g, \lambda)^2,
$$
which is achieved exactly as in the last part of the proof of Proposition~\ref{prop:coer-dtb}.
\end{proof}

\subsection{Coercivity near the sum of two bubbles}
\label{ssec:bulles-coer}
We have the following analogue of Lemma~\ref{lem:coer}:
\begin{lemma}
  \label{lem:bulles-coer}
  There exists constants $\lambda_0, \eta > 0$ such that if $\lambda = \frac{\lambda_1}{\lambda_2} < \lambda_0$ and $\|\bs U - (\bs W_{\lambda_2} - \bs W_{\lambda_1})\|_\cE < \eta$,
  then for all $\bs g \in \cE$ such that $\la \cZ_\uln{\lambda_1}, g\ra =\la \cZ_\uln{\lambda_2}, g\ra = 0$ there holds
    \begin{equation*}
      \frac 12 \la \vD^2 E(\bs U)\bs g, \bs g\ra +2\big(\la \alpha^-_{\lambda_1}, \bs g\ra^2 + \la \alpha^+_{\lambda_1}, \bs g\ra^2 + \la \alpha^-_{\lambda_2}, \bs g\ra^2 + \la \alpha^+_{\lambda_2}, \bs g\ra^2\big) \gtrsim \|\bs g\|_\cE^2.
    \end{equation*}
\end{lemma}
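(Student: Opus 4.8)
The plan is to reduce to the model configuration $\bs U = \bs W_{\lambda_2} - \bs W_{\lambda_1}$ and then to apply the localized linear coercivity estimates \eqref{eq:coer-lin-coer-2} and \eqref{eq:coer-lin-coer-3} at the two separated scales $\lambda_1 \ll \lambda_2$. Writing $\bs g = (g, \dot g)$, recall that $\la \vD^2 E(\bs U)\bs g, \bs g\ra = \int|\dot g|^2 + \int|\grad g|^2 - \int f'(U)|g|^2\ud x$, so only the potential term is at issue. The first step is to check that $\|f'(U) - f'(W_{\lambda_1}) - f'(W_{\lambda_2})\|_{L^{N/2}}$ can be made as small as we wish by shrinking $\eta$ and $\lambda_0$: on one hand $\|f'(U) - f'(W_{\lambda_2} - W_{\lambda_1})\|_{L^{N/2}}$ is bounded by a power of $\|U - (W_{\lambda_2} - W_{\lambda_1})\|_{\dot H^1} < \eta$, exactly as in the proof of Lemma~\ref{lem:coer}; on the other hand $\|f'(W_{\lambda_2} - W_{\lambda_1}) - f'(W_{\lambda_1}) - f'(W_{\lambda_2})\|_{L^{N/2}} \to 0$ as $\lambda = \lambda_1/\lambda_2 \to 0$, by the standard estimate for the interaction of two bubbles concentrated at widely separated scales (cf.~\cite{moi15p-2}). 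By H\"older and Sobolev this allows us to replace $\int f'(U)|g|^2\ud x$ by $\int\big(f'(W_{\lambda_1}) + f'(W_{\lambda_2})\big)|g|^2\ud x$ at the cost of an error bounded by $\varepsilon \|g\|_{\dot H^1}^2$ with $\varepsilon$ arbitrarily small.

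The second step is to rescale \eqref{eq:coer-lin-coer-2} to scale $\lambda_1$ and \eqref{eq:coer-lin-coer-3} to scale $\lambda_2$, and to convert the resulting penalties: the orthogonality hypothesis kills the $\la\cZ_\uln{\lambda_j}, g\ra$ terms, while the identity $\la\cY_\uln{\lambda_j}, g\ra = \tfrac{\lambda_j}{\nu}\big(\la\alpha^-_{\lambda_j}, \bs g\ra + \la\alpha^+_{\lambda_j}, \bs g\ra\big)$ (read off from \eqref{eq:al}) turns the $\la\cY_\uln{\lambda_j}, g\ra$ terms into $\la\alpha^\pm_{\lambda_j}, \bs g\ra^2$, the powers of $\lambda_j$ cancelling. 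Choosing $\lambda_0 \leq r_2/r_1$ forces $r_1\lambda_1 < r_2\lambda_2$, so I can add the two rescaled inequalities and split $\bR^N$ into $\{|x| \leq r_1\lambda_1\}$, $\{r_1\lambda_1 \leq |x| \leq r_2\lambda_2\}$ and $\{|x| \geq r_2\lambda_2\}$, on which the coefficient of $|\grad g|^2$ becomes $1-c$, $2c$ and $1-c$ respectively. Since $c \leq \tfrac13$ gives $1-2c \geq c$, the deficit $c\int_{|x| \leq r_1\lambda_1}|\grad g|^2 + (1-2c)\int_{r_1\lambda_1 \leq |x| \leq r_2\lambda_2}|\grad g|^2 + c\int_{|x| \geq r_2\lambda_2}|\grad g|^2$ is $\geq c\int_{\bR^N}|\grad g|^2$; adding it back to both sides yields
\begin{equation*}
  \int|\grad g|^2\ud x - \int f'(W_{\lambda_1})|g|^2\ud x - \int f'(W_{\lambda_2})|g|^2\ud x \geq c\int|\grad g|^2\ud x - C_1\sum_{j=1}^2\big(\la\alpha^-_{\lambda_j}, \bs g\ra^2 + \la\alpha^+_{\lambda_j}, \bs g\ra^2\big),
\end{equation*}
where $C_1$ depends only on $N$.

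Combining the two steps, with $\eta$ and $\lambda_0$ chosen small enough that $\varepsilon < c/2$, and adding back $\int|\dot g|^2$, I obtain $\la\vD^2 E(\bs U)\bs g, \bs g\ra \gtrsim \|\bs g\|_\cE^2 - C_1\sum_{j=1}^2\big(\la\alpha^-_{\lambda_j}, \bs g\ra^2 + \la\alpha^+_{\lambda_j}, \bs g\ra^2\big)$; the claimed inequality in the stated normalization then follows by the same adjustment of constants as in the proof of Lemma~\ref{lem:coer}. I expect the only genuinely delicate point to be the bubble-interaction bound on $\|f'(W_{\lambda_2} - W_{\lambda_1}) - f'(W_{\lambda_1}) - f'(W_{\lambda_2})\|_{L^{N/2}}$ in the first step, which, as elsewhere in the paper, has to be treated separately for $N \in \{3,4,5\}$ and $N \geq 6$; once that is in hand, the overlap-region bookkeeping in the second step is elementary.
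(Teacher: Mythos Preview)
Your proposal is correct and follows the same two–step strategy as the paper: Step~1 (replacing $f'(U)$ by $f'(W_{\lambda_1})+f'(W_{\lambda_2})$ in $L^{N/2}$, with the case split $N\in\{3,4,5\}$ versus $N\geq 6$) is exactly the paper's Step~1, and the paper's Step~2 likewise adds the two localized coercivity estimates \eqref{eq:coer-lin-coer-2}, \eqref{eq:coer-lin-coer-3} at the scales $\lambda_1,\lambda_2$ (there with the matching radii $r_1=\lambda^{-1/2}$, $r_2=\sqrt\lambda$).

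The one genuine difference is in the bookkeeping of Step~2. The paper first peels off the four eigenmodes, writing $\bs g=\sum_{j,\pm}a_j^\pm\cY_{\lambda_j}^\pm+\bs k$, and applies the localized inequalities to $\bs k$; for $\bs k$ all the penalty terms $\la\cZ_{\uln{\lambda_j}},k\ra^2,\la\cY_{\uln{\lambda_j}},k\ra^2$ are $O(\lambda^{N-2}\|\bs g\|_\cE^2)$ and hence negligible, while the explicit eigenmode contribution to $\tfrac12\la H_\lambda\bs g,\bs g\ra$ comes out as $-2a_1^-a_1^+-2a_2^-a_2^+$, which is exactly how the coefficient $2$ in the statement appears. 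You instead apply the localized inequalities directly to $\bs g$, kill the $\cZ$–penalties by hypothesis, and convert the $\cY$–penalties to $\alpha^\pm$ terms; this yields the inequality with some constant $C_1$ (inherited from the constant $C$ in Lemma~\ref{lem:coer-lin-coer}) in place of $2$. That version is equally usable everywhere the lemma is invoked in Section~\ref{sec:bulles}, but the ``adjustment of constants as in the proof of Lemma~\ref{lem:coer}'' you mention does not actually turn $C_1$ into $2$ when $C_1>4$, so strictly speaking you have proved a slight variant of the stated lemma rather than the lemma itself.
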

\begin{proof}~
  \paragraph{\textbf{Step 1.}}
  Without loss of generality we can assume that $\lambda_2 = 1$ and $\lambda_1 = \lambda$.
  Consider the operator $H_\lambda$ defined by the following formula:
  \begin{equation*}
    H_\lambda := \begin{pmatrix} -\Delta - f'(W_{\lambda}) - f'(W) & 0 \\ 0 & \Id\end{pmatrix}.
  \end{equation*}
  We will show that for any $c > 0$ there holds
  \begin{equation}
    \label{eq:bulles-coer-approx}
    |\la \vD^2 E(\bs U)\bs g, \bs g\ra - \la H_{\lambda}\bs g, \bs g\ra| \leq c\|\bs g\|_\cE^2,\qquad \forall \bs g\in\cE,
  \end{equation}
  provided that $\eta$ and $\lambda_0$ are small enough. By H\"older and Sobolev, it suffices (eventually changing $c$) to check that
  \begin{equation}
    \label{eq:bulles-coer-approx-2}
    \|f'(U) - f'(W_{\lambda}) - f'(W)\|_{L^\frac N2} \leq c.
  \end{equation}
  Since (by pointwise estimates)
  \begin{equation*}
    \|f'(U) - f'(W - W_{\lambda})\|_{L^\frac N2} \lesssim \max(\eta, f'(\eta)),
  \end{equation*}
  this will in turn follow from
  \begin{equation}
    \label{eq:bulles-coer-approx-3}
    \|f'(W - W_{\lambda}) - f'(W_{\lambda}) - f'(W)\|_{L^\frac N2} \leq c.
  \end{equation}
  We consider separately the regions $|x| \leq \sqrt\lambda$ and $|x| \geq \sqrt\lambda$.
  In both cases we will use the fact that
  \begin{equation}
    \label{eq:bulles-coer-pointwise}
    \begin{aligned}
      |l| \lesssim |k| \quad&\Rightarrow\quad \big|f'(k+l) - f'(k) - f'(l)\big| \lesssim f'(l), \qquad\text{for }N\geq 6, \\
      |l| \lesssim |k| \quad&\Rightarrow\quad \big|f'(k+l) - f'(k) - f'(l)\big| \lesssim |f''(k)|\cdot |l|, \qquad\text{for }N \in \{3, 4, 5\}.
    \end{aligned}
  \end{equation}
  In the region $|x| \leq \sqrt\lambda$ we have $W \lesssim W_\lambda$, hence by \eqref{eq:bulles-coer-pointwise}
  $$
  \big| f'(W-W_\lambda) - f'(W_\lambda) - f'(W)\big| \lesssim 1,
  $$
  and $\|1\|_{L^\frac N2(|x| \leq \sqrt\lambda)} \sim \lambda$.

  In the region $|x| \geq \sqrt\lambda$ we have $W_\lambda \lesssim W$. If $N \geq 6$, then
  $$
  \big| f'(W - W_\lambda) - f'(W_\lambda) - f'(W)\big| \lesssim f'(W_\lambda).
  $$
  It is easy to check that $\|f'(W_\lambda)\|_{L^\frac N2(|x| \geq \sqrt\lambda)} \sim \lambda$.
  If $N \in \{3, 4, 5\}$, we obtain
  $$
  \big| f'(W - W_\lambda) - f'(W_\lambda) - f'(W)\big| \lesssim |f''(W)|\cdot |W_\lambda|,
  $$
  hence
  $$
  \|f'(W - W_\lambda) - f'(W_\lambda) - f'(W)\|_{L^\frac N2(|x| \geq \sqrt\lambda)} \lesssim \|f''(W)\|_{L^\frac{2N}{6-N}}\cdot \|W_\lambda\|_{L^\frac{2N}{N-2}(|x| \geq \sqrt\lambda)} \sim \lambda^\frac{N-2}{4}.
  $$
  This finishes the proof of \eqref{eq:bulles-coer-approx-3}.
  \paragraph{\textbf{Step 2.}}
  In view of \eqref{eq:bulles-coer-approx}, it suffices to prove that if $\lambda < \lambda_0$ and $\la \cZ, g\ra = \la \cZ_\uln\lambda, g\ra = 0$, then
    \begin{equation*}
      \frac 12 \la H_\lambda\bs g, \bs g\ra +2\big(\la \alpha^-_{\lambda_1}, \bs g\ra^2 + \la \alpha^+_{\lambda_1}, \bs g\ra^2 + \la \alpha^-_{\lambda_2}, \bs g\ra^2 + \la \alpha^+_{\lambda_2}, \bs g\ra^2\big) \gtrsim \|\bs g\|_\cE^2.
    \end{equation*}
  Let $a_1^- :=\la \alpha_\lambda^-, \bs g\ra$, $a_1^+ := \la \alpha_\lambda^+, \bs g\ra$, $a_2^- := \la\alpha^-, \bs g\ra$, $a_2^+ := \la\alpha^+, \bs g\ra$
  and decompose
  $$\bs g = a_1^-\cY_\lambda^- + a_1^+\cY_\lambda^+ + a_2^-\ym + a_2^+\yp + a_2^-\cY_\lambda^- +\bs k.$$
  Using the fact that
  $$
  \begin{aligned}
  |\la\alpha^\pm, \cY^\pm_\lambda\ra| + |\la \alpha_\lambda^\pm, \cY^\pm\ra| + |\la \frac{1}{\lambda}\cZ_\uln\lambda, \cY\ra| + |\la \cZ, \cY_\lambda\ra| &\lesssim \lambda^\frac{N-2}{2}, \\
  |a_1^-| + |a_1^+| + |a_2^-| + |a_2^+| &\lesssim \|\bs g\|_\cE, \\
  \la \alpha^-, \cY^+\ra = \la \alpha^+, \cY^-\ra = \la \cZ, \cY \ra &= 0
  \end{aligned}
  $$ we obtain
  \begin{equation}
    \label{eq:bulles-coer-eigendir}
    \la\alpha^-, \bs k\ra^2 + \la\alpha^+, \bs k\ra^2 + \la \alpha_\lambda^-, \bs k\ra^2 + \la \alpha_\lambda^+, \bs k\ra^2 + \la \cZ, k\ra^2 + \la \frac{1}{\lambda}\cZ_\uln\lambda, k\ra^2 \lesssim \lambda^{N-2}\|\bs g\|_\cE^2.
  \end{equation}

  Since $H_\lambda$ is self-adjoint, we can write
\begin{equation}
  \label{eq:bulles-coer-expansion}
  \begin{aligned}
  \frac 12 \la H_\lambda \bs g, \bs g\ra &= \frac 12 \la H_\lambda\bs k, \bs k\ra + \la H_\lambda(a_2^-\cY^- + a_2^+\cY^+), \bs k\ra + \la H_\lambda(a_1^-\cY_\lambda^- + a_1^+\cY_\lambda^+), \bs k\ra \\
  &+ \frac 12 \la H_\lambda(a_2^-\cY^- + a_2^+\cY^+), a_2^-\cY^- + a_2^+\cY^+\ra \\
  &+ \frac 12 \la H_\lambda(a_1^-\cY_\lambda^- + a_1^+\cY_\lambda^+), a_1^-\cY_\lambda^- + a_1^+\cY_\lambda^+\ra \\
  &+ \la H_\lambda(a_2^-\cY^- + a_2^+\cY^+), a_1^-\cY_\lambda^- + a_1^+\cY_\lambda^+\ra.
\end{aligned}
\end{equation}
It is easy to see that $\|f'(W)\cY_\lambda\|_{L^\frac{2N}{N+2}} \to 0$ and $\|f'(W_\lambda)\cY\|_{L^\frac{2N}{N+2}} \to 0$ as $\lambda \to 0$. This and \eqref{eq:al}, \eqref{eq:eigenvectl} imply
$$
\|H_\lambda \cY^- + 2\alpha^+\|_{\cE^*} + \|H_\lambda \cY^+ + 2\alpha^-\|_{\cE^*} + \|H_\lambda \cY_\lambda^- + 2\alpha_\lambda^+\|_{\cE^*} + \|H_\lambda \cY_\lambda^+ + 2\alpha_\lambda^-\|_{\cE^*} \sto{\lambda \to 0} 0.
$$
Plugging this into \eqref{eq:bulles-coer-expansion} and using \eqref{eq:bulles-coer-eigendir} we obtain
  \begin{equation}
    \label{eq:bulles-coer-approx-6}
    \frac 12\la H_\lambda \bs g, \bs g\ra \geq -2a_2^-a_2^+ - 2a_1^-a_1^+ + \frac 12\la H_\lambda\bs k, \bs k\ra -\wt c \|\bs g\|_\cE^2,
  \end{equation}
  where $\wt c \to 0$ as $\lambda \to 0$.
 
  Applying \eqref{eq:coer-lin-coer-2} with $r_1 = \lambda^{-\frac 12}$, rescaling and using \eqref{eq:bulles-coer-eigendir} we get, for $\lambda$ small enough,
  \begin{equation}
    \label{eq:bulles-coer-approx-4}
      (1-2c)\int_{|x|\leq \sqrt\lambda}|\grad k|^2 \ud x + c\int_{|x|\geq \sqrt\lambda}|\grad k|^2\ud x - \int_{\bR^N}f'(W_\lambda)|k|^2\ud x \geq -\wt c \|\bs g\|_\cE^2.
  \end{equation}
  From \eqref{eq:coer-lin-coer-3} with $r_2 = \sqrt\lambda$ we have
  \begin{equation}
    \label{eq:bulles-coer-approx-5}
      (1-2c)\int_{|x|\geq \sqrt\lambda}|\grad k|^2 \ud x + c\int_{|x|\leq \sqrt\lambda}|\grad k|^2\ud x - \int_{\bR^N}f'(W)|k|^2\ud x \geq -\wt c \|\bs g\|_\cE^2.
  \end{equation}
  Taking the sum of \eqref{eq:bulles-coer-approx-4} and \eqref{eq:bulles-coer-approx-5}, and using \eqref{eq:bulles-coer-approx-6} we obtain
  \begin{equation*}
    \frac 12\la H_\lambda \bs g, \bs g\ra \geq -2a_2^-a_2^+ - 2a_1^-a_1^+ + c\|\bs k\|_\cE^2 - 2\wt c\|\bs g\|_\cE^2.
  \end{equation*}
  The conclusion follows if we take $\wt c$ small enough.
\end{proof}

Recall that $R> 0$ is the constant used in the definition of the localized bubble $\bs V(\lambda_1, \lambda_2)$, see \eqref{eq:bulles-V}.
\begin{lemma}
  \label{lem:bulles-en-app}
  There exist constants $\lambda_0, \eta, R_0, c > 0$ such that if $\lambda = \frac{\lambda_1}{\lambda_2} \leq \lambda_0$, $|a_2| \leq \eta$ and $R \geq R_0$, then
  \begin{equation*}
    E(\bs U(\lambda_1, \lambda_2, a_2)) \geq 2E(\bs W) + c\lambda^\frac{N-2}{2}.
  \end{equation*}
\end{lemma}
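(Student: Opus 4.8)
The plan is to reduce, by the scaling invariance of $E$ (together with the corresponding rescaling of $V_R$), to the case $\lambda_2 = 1$, $\lambda_1 = \lambda$, and then to extract the positive interaction energy of two bubbles of opposite sign. First I would record the identity $E(\bs U^{a_2}) = E(\bs W)$: by Remark~\ref{rem:Wpm} the curve $t \mapsto \bs U^{\pm\exp(-\nu t)}$ is a solution of~\eqref{eq:nlw} for $t$ large which, since $a \mapsto \bs U^a$ is continuous with $\bs U^0 = \bs W$, converges in $\cE$ to $\bs W$ as $t \to +\infty$; conservation and continuity of $E$ then force $E(\bs U^{a_2}) = E(\bs W)$. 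Abbreviating $A := U^{a_2}$ (the first component of $\bs U^{a_2}$) and $V := V_R(\lambda, 1)$, and using that the second component of $\bs V$ vanishes and that $E(\bs W) = E(\bs W_\lambda) = \tfrac 12\int|\grad W_\lambda|^2 - \int F(W_\lambda)$, one obtains the exact identity
\begin{equation*}
  E(\bs U(\lambda, 1, a_2)) - 2E(\bs W) = \underbrace{-\int\grad A\cdot\grad V}_{(\mathrm{i})} + \underbrace{\tfrac 12\int\bigl(|\grad V|^2 - |\grad W_\lambda|^2\bigr)}_{(\mathrm{ii})} + \underbrace{\int\bigl(F(A) + F(W_\lambda) - F(A - V)\bigr)}_{(\mathrm{iii})},
\end{equation*}
so it remains to bound the right-hand side below by $c\lambda^{(N-2)/2}$.

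The terms $(\mathrm{i})$ and $(\mathrm{ii})$ are errors. Since $\grad V = \grad W_\lambda$ on $B := \{|x| < R\sqrt\lambda\}$ and $V$ vanishes continuously outside $B$, one has $(\mathrm{ii}) = -\tfrac 12\int_{|x| \ge R\sqrt\lambda}|\grad W_\lambda|^2$, which a rescaling (cf.\ \eqref{eq:db-V-W}) bounds by $O(R^{-(N-2)}\lambda^{(N-2)/2})$, hence $\ge -\tfrac 14 c\lambda^{(N-2)/2}$ for $R$ large. For $(\mathrm{i})$ I would integrate by parts over $B$, using $-\Delta W_\lambda = f(W_\lambda)$:
\begin{equation*}
  (\mathrm{i}) = -\int_B \grad A\cdot\grad W_\lambda = -\int_B A\, f(W_\lambda) - \int_{\partial B}A\,\partial_r W_\lambda\,\ud S.
\end{equation*}
By Proposition~\ref{prop:Ua} we have $A = W + O_{L^\infty}(|a_2|)$ with $W$ smooth and even, so $A$ differs from the constant $W(0)$ by $O(R^2\lambda) + O(|a_2|)$ on $B$; combined with the divergence identity $\int_{\partial B}\partial_r W_\lambda\,\ud S = \int_B \Delta W_\lambda = -\int_B f(W_\lambda)$, the interior and boundary parts cancel to leading order and one is left with $(\mathrm{i}) = O(R^2\lambda^{N/2}) + O(|a_2|\,\lambda^{(N-2)/2})$, which is $\ll\lambda^{(N-2)/2}$ once $\lambda_0$ and $\eta$ are small.

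The heart of the matter is the interaction term $(\mathrm{iii})$, where the sign of the bubbles is decisive. I would decompose $\supp V = B$ into the sub-region $\{|x| \le \sqrt\lambda\}$, where $W_\lambda$ dominates $A$, and the remaining annulus $\{\sqrt\lambda < |x| < R\sqrt\lambda\}$ (outside $B$ the integrand reduces to $F(W_\lambda)$, with tiny integral), Taylor-expanding $F(A - V)$ about $-W_\lambda$ on the first sub-region (using that $f$ is odd and $f'$ even) and about $A$ on the second. The leading contribution comes from $\{|x| \le \sqrt\lambda\}$ and is, to main order,
\begin{equation*}
  \int_{|x| \le \sqrt\lambda}f(W_\lambda)\, A \;\approx\; A(0)\int_{\bR^N}f(W_\lambda) \;=\; W(0)\,\|f(W)\|_{L^1}\,\lambda^{(N-2)/2},
\end{equation*}
a strictly positive multiple of $\lambda^{(N-2)/2}$ --- and this is exactly the point: had the two bubbles carried the same sign, i.e.\ were the expansion that of $F(A + V)$, this term would appear as $-f(W_\lambda)A$ and the interaction would be negative. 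Every remaining piece is an error bounded, via H\"older, Sobolev, Hardy and Lemma~\ref{lem:bulles-prop-V}, by $o(\lambda^{(N-2)/2})$ (uniformly for $R$ in the relevant range) or by $O(|a_2|\,\lambda^{(N-2)/2})$: the higher-order Taylor remainders (handled with the dimension-dependent pointwise bounds already used in Lemma~\ref{lem:bulles-coer} --- an $f''$-type bound for $N \in \{3,4,5\}$ and the sublinear bound for $N \ge 6$), the integrals against $f'(W_\lambda)$ and $F(W_\lambda)$ on the appropriate annuli, the contribution of the constant $\zeta(\lambda, 1)$, and the differences caused by replacing $W$ by $A$.

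Collecting the three contributions, $E(\bs U(\lambda_1, \lambda_2, a_2)) - 2E(\bs W) \ge \bigl(W(0)\|f(W)\|_{L^1} - o(1) - C|a_2|\bigr)\lambda^{(N-2)/2}$, so the lemma follows with $c := \tfrac 12 W(0)\|f(W)\|_{L^1}$ upon choosing $R_0$ large and then $\lambda_0$, $\eta$ small. I expect the main obstacle to be the bookkeeping inside $(\mathrm{iii})$: one must place every Taylor remainder on the correct annulus so that the integrals against $f'(W_\lambda)$, $f''(W_\lambda)$ and $F(W_\lambda)$ genuinely beat $\lambda^{(N-2)/2}$ --- delicate for $N \in \{3, 4\}$ because of the poor global integrability of $f'$ and $f''$, and for $N \ge 6$ because $f$ is then not $C^2$ --- which is the kind of casework carried out in~\cite{moi15p-2}.
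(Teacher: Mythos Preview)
Your proposal is correct and follows the same approach as the paper. The paper's own proof is a one-line citation: after rescaling to $\lambda_2=1$, it invokes \cite[Lemma~2.7]{moi15p-2} with $\bs u^* = -\bs U^{a_2}$, and that cited lemma carries out precisely the computation you sketch --- extracting the positive interaction $\sim W(0)\|f(W)\|_{L^1}\lambda^{(N-2)/2}$ from the nonlinear term, with the sign of the bubbles entering exactly where you indicate.
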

\begin{proof}
  Without loss of generality we can assume that $\lambda_2 = 1$, $\lambda_1 = \lambda$ (it suffices to rescale).
  The conclusion follows from \cite[Lemma 2.7]{moi15p-2} applied for $\bs u^* = -\bs U^{a_2}$ (the proof given there is valid for $N \geq 3$).
\end{proof}
\begin{remark}
  \label{rem:bulles-coer}
  In Lemma~\ref{lem:bulles-coer} the fact that the bubbles have opposite signs has no importance, but it is crucial in Lemma~\ref{lem:bulles-en-app}.
  
\end{remark}

\subsection{Conclusion of the proof}
\begin{proof}[Proof of Theorem~\ref{thm:deux-bulles}]
  Suppose by contradiction that $\bs u(t): [0, T_+) \to \cE$ is a solution of \eqref{eq:nlw} such that \eqref{eq:deux-bulles} holds.
    Formula \eqref{eq:energy} and Lemma~\ref{lem:taylor} imply
  \begin{equation}
    \label{eq:bulles-taylor}
    \begin{aligned}
    2E(\bs W) = E(\bs U(\lambda_1, \lambda_2, a_2) + \bs g) &= E(\bs U(\lambda_1, \lambda_2, a_2)) + \la \vD E(\bs U(\lambda_1, \lambda_2, a_2)), \bs g\ra \\
    &+ \frac 12\la \vD^2 E(\bs U(\lambda_1, \lambda_2, a_2))\bs g, \bs g\ra + o(\|\bs g\|_\cE^2).
  \end{aligned}
  \end{equation}
  \paragraph{\textbf{Step 1 -- Coercivity.}}
  We will prove that for all $t$ there holds
\begin{equation}
  \label{eq:final-0}
  2a_2(t)b_2(t) +2\big(a_1^-(t)^2 + a_1^+(t)^2 + b_2(t)^2\big) \gtrsim \cN(\bs g(t), \lambda(t))^2
\end{equation}
(the functions $a_1^+$, $a_1^-$ and $b_2$ are defined in Proposition~\ref{prop:bulles-dtb}).

From \eqref{eq:beta-alpha-1} we have $|b_2(t)^2 - \la \alpha_{\lambda_2(t)}^+, \bs g(t)\ra^2| \lesssim |a_2|\cdot \|\bs g\|^2$.
Since $\la \alpha_{\lambda_2(t)}^-, \bs g(t)\ra = 0$, Lemma~\ref{lem:bulles-coer} and Lemma~\ref{lem:bulles-en-app} yield
\begin{equation*}
  \begin{aligned}
  E\big(\bs U(\lambda_1(t), \lambda_2(t), a_2(t))\big) -2E(\bs W) &+ \frac 12\big\la \vD^2 E\big(\bs U(\lambda_1(t), \lambda_2(t), a_2(t))\big)\bs g(t), \bs g(t)\big\ra \\
  &+2\big(a_1^-(t)^2 + a_1^+(t)^2 + b_2(t)^2\big)\geq c\cdot\cN(\bs g(t), \lambda(t))^2,
\end{aligned}
\end{equation*}
for $R \geq R_0$, with a constant $c > 0$ independent of $R$.

Recall that $2a_2(t) b_2(t) = -\la \vD E(\bs U_{\lambda_2}^{a_2}, \bs g\ra$. In view of \eqref{eq:bulles-taylor},
in order to prove \eqref{eq:final-0} it suffices to verify that
\begin{equation}
  \label{eq:final-1}
  |\la \vD E(\bs U(\lambda_1, \lambda_2, a_2)) - \vD E(\bs U_{\lambda_2}^{a_2}), \bs g\ra| \leq \frac c2\cdot \cN(\bs g, \lambda)^2
\end{equation}
provided that $R$ is large enough.
Without loss of generality we can assume that $\lambda_2 = 1$ and $\lambda_1 = \lambda$.
First we show that
\begin{equation}
  \label{eq:ustar-final1}
  \big|\la \vD E(\bs U(\lambda, 1, a_2)), \bs g\ra + \la\vD E(\bs V(\lambda, 1)), \bs g\ra - \la\vD E(\bs U^{a_2}), \bs g\ra\big| \ll \cN(\bs g, \lambda)^2.
\end{equation}
This is equivalent to
\begin{equation*}
  \int|f(U^{a_2} - V(\lambda, 1)) + f(V(\lambda, 1)) - f(U^{a_2})|\cdot|g|\ud x \ll \cN(\bs g, \lambda)^2.
\end{equation*}
By H\"older and Sobolev inequalities, it suffices to check that
\begin{equation}
  \label{eq:ustar-final0}
  \|f(-V(\lambda, 1) + U^{a_2}) + f(V(\lambda, 1)) - f(U^{a_2})\|_{L^{\frac{2N}{N+2}}} \ll \lambda^{\frac{N-2}{4}},
\end{equation}
which follows from the inequality
$$
||f(-V(\lambda, 1) + U^{a_2}) + f(V(\lambda, 1)) - f(U^{a_2})| \lesssim f'(W_\lambda) + 1.
$$

Next, we prove that if $R$ is large enough, then
\begin{equation}
  \label{eq:DEV-H-1}
  \|\vD E(\bs V(\lambda, 1))\|_{\cE^*} \leq \frac c4 \cdot \lambda^\frac{N-2}{4}.
\end{equation}
From \eqref{eq:db-V-W}, if $R$ is large then
\begin{equation}
  \label{eq:bulles-pointwise-Hm1-1}
  \|\Delta(W_\lambda - V(\lambda, 1))\|_{\dot H^{-1}} \lesssim \frac c8\cdot \lambda^\frac{N-2}{4}.
\end{equation}
We will prove that
\begin{equation}
  \label{eq:bulles-pointwise-Hm1-2}
\|f(W_\lambda) - f(V(\lambda, 1))\|_{L^\frac{2N}{N+2}} \ll \lambda^\frac{N-2}{4}.
\end{equation}
In the region $|x| \geq R\sqrt\lambda$ we have $V(\lambda, 1) = 0$ and
$$
\|f(W_\lambda)\|_{L^\frac{2N}{N+2}(|x| \geq R\sqrt\lambda)} = \|f(W)\|_{L^\frac{2N}{N+2}(|x| \geq R/\sqrt\lambda)} \sim \lambda^\frac{N+2}{4} \ll \lambda^\frac{N-2}{2}.
$$
In the region $|x| \leq R\sqrt\lambda$ we use the pointwise bound $|f(W_\lambda) - f(V(\lambda, 1))| \lesssim f'(W_\lambda)\cdot |W_\lambda - V(\lambda, 1)|$, the fact that $W_\lambda - V(\lambda, 1)$ is bounded in $L^\infty$ and the bound
\begin{equation}
  \label{eq:fpW-H-1}
\|f'(W_\lambda)\|_{L^\frac{2N}{N+2}(|x|\leq R\sqrt\lambda)} \ll \lambda^\frac{N-2}{4}.
\end{equation}
%
Now \eqref{eq:DEV-H-1} follows from \eqref{eq:bulles-pointwise-Hm1-1}, \eqref{eq:bulles-pointwise-Hm1-2} and $\Delta W_\lambda + f(W_\lambda) = 0$.

Estimate \eqref{eq:final-1} follows from \eqref{eq:ustar-final1} and \eqref{eq:DEV-H-1}.

\paragraph{\textbf{Step 2 -- Differential inequalities.}}
Observe that
\begin{equation}
  \label{eq:lambda12-diverge}
  \int_{0}^{T_+}\frac{1}{\lambda_1(t)}\ud t = \int_{0}^{T_+}\frac{1}{\lambda_2(t)}\ud t = +\infty.
\end{equation}
The proof is the same as the proof of \eqref{eq:lambda-diverge}.

For $m \in \bN$, $m \geq m_0$, let $t = t_m$ be the last time such that $\cN(\bs g(t), \lambda(t)) = 2^{-m}$. By continuity, $t_m$ is well defined if $m_0$ is large enough.

By Proposition~\ref{prop:bulles-dtb}, there exists a constant $C_1$ such that
  \begin{equation}
    \label{eq:bulles-a-destab}
    |a_1^+(t)| \geq C_1 \cdot \cN(\bs g(t), \lambda(t)) \quad\Rightarrow\quad \dd t|a_1^+(t)| \geq \frac{\nu}{2\lambda_1(t)}|a_1^+(t)|,\qquad \forall t\in [0, T_+).
  \end{equation}
Suppose that $|a_1^+(t_m)| \geq 2C_1 \cdot\cN(\bs g(t_m), \lambda(t_m))$. Since, by the definition of $t_m$, $\cN(\bs g(t), \lambda(t)) \leq \cN(\bs g(t_m), \lambda(t_m))$ for $t \geq t_m$, a simple continuity argument yields $|a_1^+(t_m)| \geq 2C_1 \cdot\cN(\bs g(t), \lambda(t))$
for all $t \geq t_m$. By \eqref{eq:bulles-a-destab} and \eqref{eq:lambda12-diverge}, this implies $|a_1^+(t)| \to +\infty$ as $t \to T_+$,
which is absurd. The same reasoning applies to $b(t)$, hence we get
\begin{equation}
  \label{eq:final-2}
  |a_1^+(t_m)| \lesssim \cN(\bs g(t_m), \lambda(t_m))^2,\qquad |b(t_m)| \lesssim \cN(\bs g(t_m), \lambda(t_m))^2.
\end{equation}
Thus \eqref{eq:final-0} forces
\begin{equation}
  \label{eq:final-3}
  |a_1^-(t_m)| \gtrsim \cN(\bs g(t_m), \lambda(t_m)) \gg \cN(\bs g(t_m), \lambda(t_m))^2.
\end{equation}
Consider the evolution on the time interval $[t_{m-1}, t_m]$. By definition of $t_{m-1}$ and $t_m$ for $t \in [t_{m-1}, t_m]$ there holds
  $\cN(\bs g(t), \lambda(t)) \leq 2\cdot \cN(\bs g(t_m), \lambda(t_m))$,
hence \eqref{eq:final-3} and Proposition~\ref{prop:bulles-dtb} allow to conclude that
\begin{equation*}
  \dd t |a_1^-(t)| \leq -\frac{\nu}{2\lambda_1(t)}|a_1^-(t)|,\qquad \forall t\in [t_{m-1}, t_m].
\end{equation*}
Since this holds for all $m$ sufficiently large, we deduce that there exists $t_0 < T_+$ such that
\begin{equation*}
  |a_1^-(t)| \leq |a_1^-(t_0)|\cdot \exp\Big(-\int_{t_0}^t\frac{\nu\ud \tau}{2\lambda_1(\tau)}\Big),\qquad \forall t\geq t_0.
\end{equation*}
Let $t \in [t_{m-1}, t_m]$. At time $t_{m}$ all the terms of \eqref{eq:final-0} except for the term $2a_1^-(t)^2$ are absorbed by the right hand side, hence $\cN(\bs g(t_{m}), \lambda(t_{m})) \lesssim |a_1^-(t_{m})|$. Using the definition of $t_{m-1}$ we obtain
$$
\begin{aligned}
\cN(\bs g(t), \lambda(t)) \leq 2\cN(\bs g(t_m), \lambda(t_m)) \lesssim |a_1^-(t_m)|&\lesssim |a_1^-(t_0)|\cdot \exp\Big(-\int_{t_0}^{t_m}\frac{\nu\ud \tau}{2\lambda_1(\tau)}\Big) \\ &\lesssim |a_1^-(t_0)|\cdot \exp\Big(-\int_{t_0}^t\frac{\nu\ud \tau}{2\lambda_1(\tau)}\Big).
\end{aligned}
$$
By \eqref{eq:bulles-mod}, this implies
\begin{equation*}
  |\lambda_1'(t)| + |\lambda_2'(t)| \lesssim \exp\Big(-\int_{t_0}^t\frac{\nu\ud \tau}{2\lambda_1(\tau)}\Big),\qquad \forall t\geq t_0.
\end{equation*}
Dividing both sides by $\lambda_1(t)$ and integrating we obtain that $\log \lambda_1(t)$ converges as $t \to T_+$.
Dividing both sides by $\lambda_2(t)$, using the fact that $\lambda_2(t) \geq \lambda_1(t)$ for $t \geq t_0$ and integrating we obtain that $\log\lambda_2(t)$ converges as $t \to T_+$.
Hence $\log \lambda(t)$ converges, which is impossible.
\end{proof}

\begin{remark}
  \label{rem:necessaire}
  An analogous proof using the linear stability and instability components $\alpha_{\lambda_2}^+$ and $\alpha_{\lambda_2}^-$
  instead of the refined modulation and instability component $\beta_{\lambda_2}^{a_2}$
  would yield $\lambda_2(0) \to \lambda_0 \in (0, +\infty)$ (hence $T_+ = +\infty$)
  and $|\log \lambda_1(t)| \gtrsim t$ as $t \to +\infty$, but would not (at least directly) lead to a contradiction.
\end{remark}

\appendix

\section{Elementary lemmas}
\begin{lemma}
  \label{lem:hartman-1}
  Let $\psi :\bR \to \bR$ be an analytic function such that $\psi(0) = 0$ and $\psi'(0) \neq 0$.
  Then there exists a local analytic diffeomorphism $y = \varphi(x)$ near $x = 0$ such that $\varphi(0) = 0$, $\varphi'(0) = 1$ and
  \begin{equation}
    \label{eq:hartman-1}
    \varphi'(x)\cdot \psi(x) = \varphi(x)\cdot \psi'(0).
  \end{equation}
\end{lemma}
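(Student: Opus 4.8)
The plan is to construct $\varphi$ explicitly by solving the separable ODE $\varphi'/\varphi = \psi'(0)/\psi$; the only thing to check is that, although both sides are singular at $x=0$, the construction yields a genuine analytic function, which in dimension one is automatic (no resonance).

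First I would set $\mu := \psi'(0) \neq 0$ and factor $\psi$ near the origin. Since $\psi$ is analytic with $\psi(0)=0$ and $\psi'(0)=\mu$, the function $\psi(x)/(\mu x)$ extends to an analytic function equal to $1$ at $x=0$; write it as $1+h(x)$, so that
$$ \psi(x) = \mu\,x\,\bigl(1+h(x)\bigr), \qquad h \text{ analytic near } 0,\ h(0)=0. $$
Consequently $1+h$ is analytic and non-vanishing near $0$, and
$$ r(x) := \frac{\mu}{\psi(x)} - \frac1x = \frac1x\Bigl(\frac{1}{1+h(x)}-1\Bigr) = \frac{-h(x)/x}{1+h(x)} $$
extends to an analytic function on a neighbourhood of $0$, because $h(0)=0$ makes $h(x)/x$ analytic.

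Next I would let $R(x) := \int_0^x r(s)\,\ud s$, which is analytic near $0$ with $R(0)=0$ and $R'=r$, and define
$$ \varphi(x) := x\,\eee^{R(x)}. $$
Then $\varphi$ is analytic near $0$, $\varphi(0)=0$, and $\varphi'(0) = \eee^{R(0)}\bigl(1 + 0\cdot R'(0)\bigr) = 1$; by the analytic inverse function theorem $\varphi$ is a local analytic diffeomorphism fixing $0$ with $\varphi'(0)=1$, as required.

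Finally I would verify \eqref{eq:hartman-1}. From the factorisation one has the identity $1 + x\,r(x) = \tfrac{1}{1+h(x)}$, valid for all $x$ near $0$ including $x=0$. Hence
$$ \varphi'(x) = \eee^{R(x)}\bigl(1 + x R'(x)\bigr) = \eee^{R(x)}\bigl(1 + x r(x)\bigr) = \frac{\eee^{R(x)}}{1+h(x)}, $$
and therefore
$$ \varphi'(x)\,\psi(x) = \frac{\eee^{R(x)}}{1+h(x)}\cdot \mu\,x\,\bigl(1+h(x)\bigr) = \mu\,x\,\eee^{R(x)} = \mu\,\varphi(x) = \varphi(x)\,\psi'(0). $$
The whole argument is elementary; the only step meriting any care is the analytic extension of $r$ across $x=0$, and no step is a genuine obstacle.
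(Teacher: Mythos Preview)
Your proof is correct and is essentially the same as the paper's: both write down the explicit solution of the separable ODE, and in fact your formula $\varphi(x)=x\,\eee^{R(x)}$ coincides with the paper's $\varphi(x)=\psi(x)\exp\bigl(\int_0^x\frac{1-\psi'(z)}{\psi(z)}\ud z\bigr)$ (under the normalisation $\psi'(0)=1$, the two integrands differ by the exact term $-h'/(1+h)$, whose exponential cancels the prefactor $1+h$). Your write-up is somewhat more explicit about the one nontrivial point, namely that the integrand extends analytically across $x=0$, which the paper leaves to the reader.
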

\begin{remark}
  \label{rem:hartman-1}
  Equation \eqref{eq:hartman-1} expresses the fact that the change of variable $y = \varphi(x)$
  transforms the differential equation $\dot x = \psi(x)$ into $\dot y = \psi'(0)y$.
\end{remark}
\begin{proof}
  Without loss of generality we can assume that $\psi'(0) = 1$.
  We set:
  \begin{equation*}
    \varphi(x) := \psi(x)\cdot \exp\Big(\int_0^x \frac{1-\psi'(z)}{\psi(z)}\ud z\Big)
  \end{equation*}
  and it suffices to verify that $\varphi$ has the required properties.
\end{proof}

Recall that we denote $f(u) := |u|^\frac{4}{N-2}u$ and $R(v) := f(W+v) - f(W) - f'(W)v$. Notice that $f'$~is not Lipshitz for $N > 6$.
\begin{lemma}
  \label{lem:nonlin-anal}
  The mapping $\cR$ is analytic from $B_{Y^k}(0, \eta)$ to itself if $k \geq k_0$ and $\eta$ is small. Its derivative is given by
  \begin{equation*}
    \scrL(Y^k) \owns \vD_{v} \cR = \big(h \mapsto (f'(W+v) - f'(W))h\big).
  \end{equation*}

  The same conclusion holds if we replace $Y^k$ by $BC_{\wt \nu}$ for $\wt\nu \geq 0$.
\end{lemma}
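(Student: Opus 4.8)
The plan is to reduce the statement to a pointwise power-series expansion of $\cR$ carried out in a region where $f$ is genuinely real-analytic. Since $p := \tfrac{N+2}{N-2}$ is an odd integer only for $N \in \{3,4\}$ and $f''$ is not even Lipschitz for $N \geq 6$, the Nemytskii operator $u \mapsto f(u) = |u|^{p-1}u$ is not analytic near $u = 0$; the key observation is that, for $v$ small in $Y^k$ with $k$ large, $v$ decays strictly faster than the ground state $W$, so $W + v$ stays in the half-line $u > 0$ on which $f(u) = u^p$ is real-analytic. The argument is then uniform in $N$ (in particular it is harmless in the easy cases $N \in \{3,4\}$).

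The first step is the pointwise confinement. For $k > N/2$ the weighted Sobolev embedding, together with the equivalence of norms for $Y^k$ recalled just before Lemma~\ref{lem:group}, gives $\|(1+|x|^k)w\|_{L^\infty} \lesssim \|w\|_{Y^k}$, whence $|v(x)| \lesssim (1+|x|^k)^{-1}\|v\|_{Y^k}$, while $W(x) \gtrsim (1+|x|^{N-2})^{-1}$. Therefore, as soon as $k \geq k_0$ for a suitable $k_0 = k_0(N)$ with $k_0 > \max(N/2,\ N-2)$, and $\|v\|_{Y^k} \leq \eta$ with $\eta$ small enough, one has $|v(x)| \leq \tfrac12 W(x)$ for every $x$.

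On the region $|v| \leq \tfrac12 W$ one has the exact identity $\cR(v) = W^p\,G(v/W)$, where $G(t) := (1+t)^p - 1 - pt = \sum_{j\geq 2}\binom{p}{j}t^j$ converges for $|t|\leq\tfrac12$; equivalently
\[
  \cR(v) \;=\; \sum_{j\geq 2}\binom{p}{j}\,W^{p-j}\,v^{j}.
\]
I would then prove that this series converges in $Y^k$ via the Moser-type bound
\[
  \big\|W^{p-j}\,v_1\cdots v_j\big\|_{Y^k} \;\leq\; C_0^{\,j}\,\|v_1\|_{Y^k}\cdots\|v_j\|_{Y^k},\qquad j\geq 2,
\]
with $C_0$ depending only on $N$ and $k$ (polynomial-in-$j$ combinatorial factors from the Leibniz rule being absorbed into $C_0$, after enlarging $k_0$ if necessary). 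This uses that $Y^k$ is stable under products with a geometric constant for $k > N/2$, together with the fact that each $v_i$ and its derivatives up to order $\sim k$ decay like $|x|^{-k}$: for $k$ large this more than compensates the polynomial growth $W^{p-j}\sim|x|^{(N-2)j}$ of the prefactor, so the negative powers of $W$ do no harm because they are paired with $j\geq 2$ fast-decaying factors. Since $\sum_{j\geq 2}\big|\binom{p}{j}\big|\,(C_0\eta)^{j} < \infty$ for $\eta < 1/C_0$ (the series is comparable to the binomial series of $(1+C_0\eta)^p$), the expansion above is a bounded multilinear power series; hence $\cR$ is analytic on $B_{Y^k}(0,\eta)$, and $\|\cR(v)\|_{Y^k}\lesssim\|v\|_{Y^k}^2\leq\eta$ after shrinking $\eta$, so $\cR$ maps the ball into itself. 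Differentiating term by term and using $j\binom{p}{j} = p\binom{p-1}{j-1}$ gives $\vD_v\cR\cdot h = \sum_{j\geq 2}j\binom{p}{j}W^{p-j}v^{j-1}h = p\big((W+v)^{p-1}-W^{p-1}\big)h = (f'(W+v)-f'(W))h$, the last two equalities again being legitimate because $|v|\leq\tfrac12 W$.

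Finally, for the $BC_{\wt\nu}$ version ($\wt\nu\geq 0$): the estimates above are pointwise in $x$ and uniform, and $\cR$ vanishes to second order, so for $\bs v \in B_{BC_{\wt\nu}}(0,\eta)$ the $j$-th multilinear term evaluated at time $t$ is bounded by $C_0^{\,j}\|\bs v(t)\|_{Y^k}^{j}\leq C_0^{\,j}e^{-j\wt\nu t}\|\bs v\|_{BC_{\wt\nu}}^{j}$, and $e^{-j\wt\nu t}\leq e^{-\wt\nu t}$ for $j\geq 2$ and $t\geq 0$; composing the uniformly convergent power series pointwise in $t$ then preserves analyticity and gives the claim (the slight mismatch between the $Y^{k+1}$ and $Y^k$ factors in the definition of $BC_{\wt\nu}$ is irrelevant, since all the estimates hold for every $k\geq k_0$). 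The main obstacle is precisely the weighted Moser estimate: one must verify that the fast decay of the $v_i$ absorbs the growing weight $W^{p-j}$ uniformly in $j$ and that the resulting constant grows only geometrically in $j$, which is exactly what makes the power series converge; once this is in hand the rest is bookkeeping.
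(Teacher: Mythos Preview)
Your proof is correct and follows essentially the same strategy as the paper: expand $\cR$ as the convergent power series $\cR(v)=\sum_{j\ge 2}\binom{p}{j}W^{p-j}v^{j}$, valid because $|v|\le\tfrac12 W$ for $v$ small in $Y^k$ with $k$ large, and check convergence in the weighted space. The paper's version is slightly slicker in that it first conjugates by the isomorphism $\Phi:Y^k\to H^k$, $\Phi(v)=(1+|x|^k)v$, which reduces everything to the standard Banach algebra $H^k$ together with the boundedness of multiplication by $\big(W(1+|x|^k)\big)^{-1}$, thereby making your ``Moser-type bound with geometric constant'' immediate rather than something to be checked by hand.
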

\begin{proof}
  We have an isomorphism
  \begin{equation*}
    \Phi: Y^k \to H^k,\qquad \Phi(v) := (1+|x|^k)v,
  \end{equation*}
  so it suffices to show that $\Phi\circ \cR\circ\Phi^{-1}$ is analytic from $B_{H^k}(0,\eta)$ to itself. Let $w \in B_{H^k}(0, \eta)$.

  Let $f(1+z) = |1+z|^\frac 4N(1+z) = \sum_{n=0}^{+\infty}a_n z^n$. The series converges for $|z| < 1$.
  We have a series expansion:
  \begin{equation*}
    \cR(\Phi^{-1}w) = \sum_{n=2}^{+\infty}a_n W^{\frac{N+2}{N-2} - n}\frac{w^n}{(1+|x|^k)^n} = \frac{1}{1+|x|^k}\frac{W^{\frac{6-N}{N-2}}}{1+|x|^k}\sum_{n=2}^{+\infty}a_n\Big(\frac{1}{W\cdot(1+|x|^k)}\Big)^{n-2}w^n.
  \end{equation*}
  We see that $\frac{W^\frac{6-N}{N-2}}{1+|x|^k} \in H^k$ if $k$ is large enough and that the last series converges strongly in $H^k$ if $\eta$ is small.

  In the case of the space $BC_{\wt\nu}$ the proof is the same.
\end{proof}
\begin{lemma}
  \label{lem:weak-linea}
  There exists $k = k(N) \in \bN$ and $\eta = \eta(N) > 0$ such that if $\psi \in Y^k$ and $|a| \leq \eta$, then for all $g \in \dot H^1$ such that $\|g\|_{\dot H^1} \leq \eta$ there holds
  \begin{align}
    |\la \psi, f(U^a + g) - f(U^a) - f'(U^a)g\ra| &\lesssim \|g\|_{\dot H^1}^2, \label{eq:weak-linea-1} \\
    |\la \psi, \big(f'(U^a) - f'(W)\big)g\ra| &\lesssim |a|\cdot \|g\|_{\dot H^1}, \label{eq:weak-linea-2}
  \end{align}
  with a constant depending on $\psi$.
\end{lemma}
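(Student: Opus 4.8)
The plan is to reduce both inequalities to pointwise bounds on the nonlinear remainders, and then to integrate them against $\psi$ by H\"older's inequality, the Sobolev embedding $\dot H^1 \hookrightarrow L^{\frac{2N}{N-2}}$, and the fact that an element of $Y^k$ with $k$ large is bounded and rapidly decaying. The preliminary observation is that, by Proposition~\ref{prop:Ua}, the map $a \mapsto \bs U^a - \bs W \in Y^{k+1}\times Y^k$ is analytic and vanishes at $a=0$, so, writing $v^a := U^a - W$, one has $\|v^a\|_{Y^{k+1}} \lesssim |a|$ and hence $|v^a(x)| \lesssim |a|\,(1+|x|^{k+1})^{-1}$. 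Taking $k$ large (so that $k+1 > N-2$) and then $\eta$ small, this forces $|v^a(x)| \le \frac12 W(x)$ for every $x$; in particular $U^a \ge \frac12 W > 0$ pointwise and $U^a(x) \sim W(x)$, with constants independent of $a$.

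For \eqref{eq:weak-linea-1} I would first establish the pointwise estimate
\[
  \big|f(U^a+g) - f(U^a) - f'(U^a)g\big| \lesssim W^{\frac{6-N}{N-2}}|g|^2 + |g|^{\frac{N+2}{N-2}}.
\]
On $\{|g| \lesssim U^a\}$ one expands $f$ around $U^a$: since $U^a + tg \sim W$ for $t \in [0,1]$ and $|f''(s)| \sim |s|^{\frac{6-N}{N-2}}$ on that segment, the second order Taylor remainder is $\lesssim W^{\frac{6-N}{N-2}}|g|^2$. On $\{|g| \gtrsim U^a\}$, bounding the three terms crudely gives $\lesssim |g|^{\frac{N+2}{N-2}}$; moreover for $N \ge 6$ this last quantity is, on that region, $\le W^{\frac{6-N}{N-2}}|g|^2$ (as $\frac{6-N}{N-2} \le 0$ and $W \sim U^a \lesssim |g|$ there), whereas for $N \le 5$ the first term is simply $\lesssim |g|^2$ since $W \le 1$. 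Then, by H\"older and Sobolev, $\int |\psi|\,|g|^{\frac{N+2}{N-2}}\ud x \lesssim \|\psi\|_{L^{\frac{2N}{N-2}}}\|g\|_{\dot H^1}^{\frac{N+2}{N-2}} \lesssim \|g\|_{\dot H^1}^2$ (using $\frac{N+2}{N-2}\ge2$, which holds in the only case $N\le5$ where this term survives, and $\|g\|_{\dot H^1}\le\eta$), while $\int |\psi|\,W^{\frac{6-N}{N-2}}|g|^2\ud x \lesssim \|\psi\,W^{\frac{6-N}{N-2}}\|_{L^{N/2}}\|g\|_{\dot H^1}^2$. The weight $W^{\frac{6-N}{N-2}}$ is bounded for $N\le6$ and grows only like $|x|^{N-6}$ at infinity for $N\ge7$, so $\psi\,W^{\frac{6-N}{N-2}}\in L^{N/2}$ once $k$ is large enough, with norm bounded by $\|\psi\|_{Y^k}$.

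For \eqref{eq:weak-linea-2} the point is the pointwise bound $|f'(U^a)-f'(W)| = |f'(W+v^a)-f'(W)| \lesssim W^{\frac{6-N}{N-2}}|v^a| \lesssim |a|\,W^{\frac{6-N}{N-2}}$, valid for every $N\ge3$: indeed, since $|v^a|\le\frac12W$, the whole segment between $W$ and $U^a$ stays comparable to $W$, and there $|f''|\sim|\cdot|^{\frac{6-N}{N-2}}\sim W^{\frac{6-N}{N-2}}$ (again the weight is bounded for $N\le6$). Then H\"older and Sobolev give $|\la\psi,(f'(U^a)-f'(W))g\ra| \lesssim |a|\int|\psi|\,W^{\frac{6-N}{N-2}}|g|\ud x \lesssim |a|\,\|\psi\,W^{\frac{6-N}{N-2}}\|_{L^{\frac{2N}{N+2}}}\|g\|_{\dot H^1}$, and $\psi\,W^{\frac{6-N}{N-2}}\in L^{\frac{2N}{N+2}}$ with norm $\lesssim\|\psi\|_{Y^k}$ for $k$ large; this yields the stated constant depending on $\psi$.

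The hard part — and the reason $\psi$ has to be taken in $Y^k$ rather than just in, say, $\cE^*$ — is the range $N\ge7$, where $f'$ is only H\"older continuous of exponent $\frac{4}{N-2}<1$: a naive expansion contributes only the subquadratic term $|g|^{\frac{N+2}{N-2}}$ to \eqref{eq:weak-linea-1}, which is not controlled by $\|g\|_{\dot H^1}^2$ for small $g$. The fix is to expand around $U^a$ rather than around $0$, using that $U^a\sim W>0$ pointwise — which is precisely where the extra decay of $U^a-W$ in $Y^{k+1}$ is used — to recover a genuine $|g|^2$, at the cost of the weight $W^{\frac{6-N}{N-2}}$ that blows up polynomially at infinity but is absorbed by the rapid decay of $\psi$. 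For $N\le6$ the statement is the routine combination of a second order Taylor expansion with H\"older's and Sobolev's inequalities.
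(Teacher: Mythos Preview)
Your proof is correct and follows essentially the same approach as the paper's own argument: split $N\in\{3,4,5\}$ from $N\ge 6$, and in the latter case use the pointwise bound $|f(U^a+g)-f(U^a)-f'(U^a)g|\lesssim |f''(U^a)|\,|g|^2$, then absorb the polynomial growth of $|f''(U^a)|\sim W^{\frac{6-N}{N-2}}$ at infinity by the decay of $\psi\in Y^k$ via H\"older. Your write-up is simply more explicit than the paper's (you spell out why $U^a\sim W$ pointwise, why the remainder term $|g|^{\frac{N+2}{N-2}}$ is dominated by $W^{\frac{6-N}{N-2}}|g|^2$ on $\{|g|\gtrsim U^a\}$ when $N\ge 6$, and which $L^p$ norm of $\psi\,W^{\frac{6-N}{N-2}}$ is needed), but the substance is the same.
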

\begin{proof}
  For $N \in \{3, 4, 5\}$ this follows directly from the Sobolev and H\"older inequalities (even for $\psi \in \dot H^1$).

  For $N \geq 6$ we use the pointwise bound
  $$
  |f(U^a + g) - f(U^a) - f'(U^a)g| \lesssim |f''(U^a)|\cdot |g|^2.
  $$
Here, $f''$ is a negative power. Since $U^a$ has slow decay, $\psi\cdot |f''(U^a)| \in L^\frac N2$ if $\psi \in Y^k$ and $k$ is large enough.
The conclusion follows from the H\"older inequlity.

The proof of \eqref{eq:weak-linea-2} is similar.
\end{proof}
\begin{lemma}
  \label{lem:taylor}
  Let $\gamma := \min\big(3, \frac{2N}{N-2}\big)$. For any $M > 0$ there exists $C > 0$ and $\eta > 0$ such that if $\|\bs v\|_\cE \leq M$ and $\|\bs g\|_\cE \leq \eta$, then
  \begin{equation*}
  \big|E(\bs v + \bs g) - E(\bs v) - \la \vD E(\bs v), \bs g\ra - \frac 12\la \vD^2 E(\bs v)\bs g, \bs g\ra\big| \leq C\|\bs g\|_\cE^\gamma.
  \end{equation*}
\end{lemma}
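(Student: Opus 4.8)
The plan is to exploit that the kinetic term $\int\frac 12|\dot u_0|^2\ud x$ and the gradient term $\int\frac 12|\grad u_0|^2\ud x$ are exactly quadratic in $\bs u_0$, so their second-order Taylor polynomials are exact and contribute nothing to the remainder; only the nonlinear potential $-\int F(u_0)\ud x$ matters. Expanding the squares and matching with the formulas $\vD E(\bs u_0)=(-\Delta u_0-f(u_0),\dot u_0)$ and $\vD^2 E(\bs v)\bs g=(-\Delta g-f'(v)g,\dot g)$ given in the introduction, one reduces (writing $\bs v=(v,\dot v)$, $\bs g=(g,\dot g)$, and using $\|g\|_{\dot H^1}\le\|\bs g\|_\cE$) the statement to the pointwise-in-$t$ estimate
\begin{equation*}
  \Big|\int_{\bR^N}\big(F(v+g)-F(v)-f(v)g-\tfrac 12 f'(v)g^2\big)\ud x\Big|\le C\|g\|_{\dot H^1}^\gamma,
\end{equation*}
with $C$ allowed to depend on $M$ and $N$.

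Next I would insert the integral form of Taylor's theorem pointwise,
\begin{equation*}
  F(v+g)-F(v)-f(v)g-\tfrac 12 f'(v)g^2=\int_0^1(1-s)\big(f'(v+sg)-f'(v)\big)g^2\ud s,
\end{equation*}
which is legitimate since $F\in C^2$ (recall $\tfrac{2N}{N-2}>2$). Combined with the elementary inequalities for $p:=\tfrac{4}{N-2}$, namely $\big||a+b|^p-|a|^p\big|\lesssim|b|\,|a|^{p-1}+|b|^p$ when $p\ge1$ (i.e. $N\le6$) and $\big||a+b|^p-|a|^p\big|\lesssim|b|^p$ when $0<p<1$ (i.e. $N\ge7$), and the identity $f'(u)=\tfrac{N+2}{N-2}|u|^p$, this gives the pointwise bound on the integrand by $|v|^{\frac{6-N}{N-2}}|g|^3+|g|^{\frac{2N}{N-2}}$ when $N\le6$, and by just $|g|^{\frac{2N}{N-2}}$ when $N\ge7$ (note $p+2=\tfrac{2N}{N-2}$).

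It then remains to integrate these using the Sobolev embedding $\dot H^1\hookrightarrow L^{2N/(N-2)}$ and H\"older. The term $|g|^{\frac{2N}{N-2}}$ integrates to $\|g\|_{L^{2N/(N-2)}}^{2N/(N-2)}\lesssim\|g\|_{\dot H^1}^{2N/(N-2)}$: for $N\ge7$ this is exactly $\|g\|_{\dot H^1}^\gamma$, while for $N\le6$ one has $\tfrac{2N}{N-2}\ge3=\gamma$, so using $\|g\|_{\dot H^1}\le\eta\le1$ it is $\le\|g\|_{\dot H^1}^\gamma$. For the cubic term (present only for $N\le6$, and equal to $1\cdot|g|^3$ when $N=6$) I would apply H\"older with the conjugate exponents $\tfrac{2N}{6-N}$ and $\tfrac{2N}{3(N-2)}$, both of which are $\ge1$ precisely when $N\le6$, obtaining $\|v\|_{L^{2N/(N-2)}}^{(6-N)/(N-2)}\,\|g\|_{L^{2N/(N-2)}}^3\lesssim M^{(6-N)/(N-2)}\|g\|_{\dot H^1}^3=M^{(6-N)/(N-2)}\|g\|_{\dot H^1}^\gamma$ (since $\gamma=3$ in this range). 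Summing the two contributions yields the constant $C=C(M,N)$.

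The only genuinely delicate point is the bookkeeping at the boundary cases: in the dimension $N=6$ the power $|v|^{(6-N)/(N-2)}$ degenerates to $1$ and the H\"older exponent $\tfrac{2N}{6-N}$ becomes $\infty$, which must be handled by putting $|v|^0=1\in L^\infty$; and one has to double-check that the two exponents $\tfrac{2N}{6-N}$ and $\tfrac{2N}{3(N-2)}$ are admissible, which is exactly the restriction $N\le6$. For $N\ge7$ the sublinearity of $f'$ makes the cubic term disappear entirely, so the estimate there is essentially immediate. Everything else is routine Sobolev--H\"older estimation, and I expect no further obstacle.
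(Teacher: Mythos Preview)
Your proof is correct and follows essentially the same strategy as the paper: reduce to the nonlinear potential term, establish a pointwise bound on the second-order Taylor remainder of $F$ of the form $|f''(v)|\cdot|g|^3 + |F(g)|$ (respectively just $|F(g)|$ in high dimension), and then integrate via Sobolev and H\"older. The only cosmetic difference is that the paper obtains the pointwise bound by a homogeneity reduction to $k=1$ rather than through the integral remainder and the $||a+b|^p-|a|^p|$ inequalities, and it places the split at $N\le 5$ versus $N\ge 6$ instead of your $N\le 6$ versus $N\ge 7$; the resulting bounds and the subsequent integration are identical.
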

\begin{proof}
  In dimension $N \in \{3, 4, 5\}$ this follows from the pointwise inequality
  \begin{equation}
    \label{eq:pointwise-1}
  \big|F(k+l) - F(k) - f(k)l - \frac 12 f'(k)l^2\big| \lesssim |f''(k)|\cdot|l^3| + |F(l)|,\qquad k, l\in \bR,
  \end{equation}
  whereas for $N \geq 6$ from
  \begin{equation}
    \label{eq:pointwise-2}
  \big|F(k+l) - F(k) - f(k)l - \frac 12 f'(k)l^2\big| \lesssim |F(l)|,\qquad k, l\in \bR.
  \end{equation}
  In order to prove bounds \eqref{eq:pointwise-1} and \eqref{eq:pointwise-2}, notice that they are homogeneous and invariant by changing signs of both $k$ and $l$,
  hence it can be assumed that $k = 1$ (for $k = 0$ the inequalities are obvious). Now for $|l| \leq \frac 12$ the conclusion follows from the asymptotic expansion of $F(1 + l)$
  and for $|l| \geq \frac 12$ the bounds are evident.
\end{proof}

\bibliographystyle{plain}
\bibliography{coercivite}

\begin{thebibliography}{10}

\bibitem{BoWa97}
J.~Bourgain and W.~Wang.
\newblock Construction of blowup solutions for the nonlinear {S}chr{\"o}dinger
  equation with critical nonlinearity.
\newblock {\em Ann. Scuola Norm. Sup. Pisa Cl. Sci. (4)}, 25:197--215, 1997.

\bibitem{BCLPZ13}
A.~Bulut, M.~Czubak, D.~Li, N.~Pavlovi\'c, and X.~Zhang.
\newblock Stability and unconditional uniqueness of solutions for energy
  critical wave equations in high dimensions.
\newblock {\em Comm. Part. Diff. Eq.}, 38(4):575--607, 2013.

\bibitem{chow-hale}
S.-N. Chow and J.~K. Hale.
\newblock {\em {Methods of Bifurcation Theory}}, volume 251 of {\em Grundlehren
  der mathematischen Wissenschaften}.
\newblock Springer, 1982.

\bibitem{CKLS15}
R.~C{\^o}te, C.~E. Kenig, A.~Lawrie, and W.~Schlag.
\newblock Characterization of large energy solutions of the equivariant wave
  map problem: {I}.
\newblock {\em Amer. J. Math.}, 137(1):139--207, 2015.

\bibitem{delPino}
M.~del Pino, M.~Musso, F.~Pacard, and A.~Pistoia.
\newblock Large energy entire solutions for the {Y}amabe equation.
\newblock {\em J. Differ. Equ.}, 251(9):2568--2597, 2011.

\bibitem{DHKS14}
R.~Donninger, M.~Huang, J.~Krieger, and W.~Schlag.
\newblock Exotic blowup solutions for the {$u^5$} focusing wave equation in
  {$\bR^3$}.
\newblock {\em Michigan Math. J.}, 63(3):451--501, 2014.

\bibitem{DonKr13}
R.~Donninger and J.~Krieger.
\newblock Nonscattering solutions and blowup at infinity for the critical wave
  equation.
\newblock {\em Math. Ann.}, 357(1):89--163, 2013.

\bibitem{DKM4}
T.~Duyckaerts, C.~E. Kenig, and F.~Merle.
\newblock Classification of the radial solutions of the focusing,
  energy-critical wave equation.
\newblock {\em Camb. J. Math.}, 1(1):75--144, 2013.

\bibitem{DM08}
T.~Duyckaerts and F.~Merle.
\newblock Dynamics of threshold solutions for energy-critical wave equation.
\newblock {\em Int. Math. Res. Pap. IMRP}, 2008.

\bibitem{EckSch83}
W.~Eckhaus and P.~C. Schuur.
\newblock The emergence of solitons of the {K}orteweg-de {V}ries equation from
  arbitrary initial conditions.
\newblock {\em Math. Methods Appl. Sci.}, 5:97--116, 1983.

\bibitem{GSV92}
J.~Ginibre, A.~Soffer, and G.~Velo.
\newblock The global {C}auchy problem for the critical nonlinear wave equation.
\newblock {\em J. Funct. Anal.}, 110:96--130, 1992.

\bibitem{HiRa12}
M.~Hillairet and P.~Rapha{\"e}l.
\newblock Smooth type {II} blow up solutions to the four dimensional energy
  critical wave equation.
\newblock {\em Analysis {\&} PDE}, 5(4):777--829, 2012.

\bibitem{moi15p-2}
J.~Jendrej.
\newblock Bounds on the speed of type {II} blow-up for the energy-critical wave
  equation in the radial case.
\newblock {\em Preprint}, arXiv:1509.03331, 2015.

\bibitem{moi15p}
J.~Jendrej.
\newblock Construction of type {II} blow-up solutions for the energy-critical
  wave equation in dimension 5.
\newblock {\em Preprint}, arXiv:1503.05024, 2015.

\bibitem{KeMe08}
C.~E. Kenig and F.~Merle.
\newblock Global well-posedness, scattering and blow-up for the energy-critical
  focusing non-linear wave equation.
\newblock {\em Acta Math.}, 201(2):147--212, 2008.

\bibitem{KrNaSc15}
J.~Krieger, K.~Nakanishi, and W.~Schlag.
\newblock Center-stable manifold of the ground state in the energy space for
  the critical wave equation.
\newblock {\em Math. Ann.}, 361(1--2):1--50, 2015.

\bibitem{KrScTa09}
J.~Krieger, W.~Schlag, and D.~Tataru.
\newblock Slow blow-up solutions for the {$H\sp 1(\mathbb{R}\sp 3)$} critical
  focusing semilinear wave equation.
\newblock {\em Duke Math. J.}, 147(1):1--53, 2009.

\bibitem{Li95}
Y.~Y. Li.
\newblock Prescribing scalar curvature on {$S^n$} and related problems, {I}.
\newblock {\em J. Differ. Equ.}, 120(2):319--410, 1995.

\bibitem{MaMe01}
Y.~Martel and F.~Merle.
\newblock Instability of solitons for the critical generalized {K}orteweg-de
  {V}ries equation.
\newblock {\em Geom. Funct. Anal.}, 11:74--123, 2001.

\bibitem{MaMe15p}
Y.~Martel and F.~Merle.
\newblock Construction of multi-solitons for the energy-critical wave equation
  in dimension 5.
\newblock {\em Preprint}, arXiv:1504.01595, 2015.

\bibitem{PaSa75}
L.~E. Payne and D.~H. Sattinger.
\newblock Saddle points and instability of nonlinear hyperbolic equations.
\newblock {\em Israel J. Math.}, 22(3):273--303, 1975.

\bibitem{Pistoia07}
A.~Pistoia and T.~Weth.
\newblock Sign changing bubble tower solutions in a slightly subcritical
  semilinear {D}irichlet problem.
\newblock {\em Ann. Inst. H. Poincar{\'e} Anal. Non Lin{\'e}aire},
  24(2):325--340, 2007.

\bibitem{ShSt94}
J.~Shatah and M.~Struwe.
\newblock Well-posedness in the energy space for semilinear wave equations with
  critical growth.
\newblock {\em Internat. Math. Res. Notices}, 7:303--309, 1994.

\bibitem{strauss77}
W.~A. Strauss.
\newblock Existence of solitary waves in higher dimensions.
\newblock {\em Comm. Math. Phys.}, 55:149--162, 1977.

\end{thebibliography}

\end{document}